\newcommand{\G}{\mathcal G}
\newcommand{\m}{\mathfrak m}
\newcommand{\R}{\mathcal R}
\newcommand{\calS}{\mathcal S}
\newcommand{\calT}{\mathcal T}
\newcommand{\V}{\mathcal V}
\newcommand{\bN}{\mathbb N}
\newcommand{\bQ}{\mathbb Q}
\newcommand{\bR}{\mathbb R}
\def\Ind#1#2{#1\setbox0=\hbox{$#1x$}\kern\wd0\hbox to 0pt{\hss$#1\mid$\hss}
\lower.9\ht0\hbox to 0pt{\hss$#1\smile$\hss}\kern\wd0}
\def\ind{\mathop{\mathpalette\Ind{}}}
\def\Notind#1#2{#1\setbox0=\hbox{$#1x$}\kern\wd0\hbox to 0pt{\mathchardef
\nn=12854\hss$#1\nn$\kern1.4\wd0\hss}\hbox to
0pt{\hss$#1\mid$\hss}\lower.9\ht0 \hbox to
0pt{\hss$#1\smile$\hss}\kern\wd0}
\def\nind{\mathop{\mathpalette\Notind{}}}
\def\thind{\mathop{\mathpalette\Ind{}}^{\text{\th}} }
\def\nthind{\mathop{\mathpalette\Notind{}}^{\text{\th}} }
\newcommand{\acl}{\mathrm {acl}}
\newcommand{\Aut}{\mathrm {Aut}}
\newcommand{\dcl}{\mathrm {dcl}}
\newcommand{\eq}{\mathrm {eq}}
\newcommand{\Eq}[1]{\mathrel{#1 ^{\eq}}}
\newcommand{\id}{\mathrm {id}}
\newcommand{\kInt}{\mathrm {kInt}}
\newcommand{\rc}{\mathrm {rc}}
\newcommand{\res}{\mathrm {res}}
\newcommand{\rk}{\mathrm {rk}}
\newcommand{\RV}{\mathrm {RV}}
\newcommand{\Th}{\mathrm {Th}}
\newcommand{\tp}{\mathrm {tp}}
\newcommand{\ball}[3]{\mathrel{B^{\mathrm #1}_{#2}(#3)}}
\newcommand{\Ball}[2]{\mathrel{B_{#1}(#2)}}
\theoremstyle{plain}
\newtheorem{theorem}{Theorem}[section]
\newtheorem{lemma}[theorem]{Lemma}
\newtheorem{corollary}[theorem]{Corollary}
\newtheorem{proposition}[theorem]{Proposition}
\theoremstyle{definition}
\newtheorem{definition}[theorem]{Definition}
\newtheorem{fact}[theorem]{Fact}
\newtheorem{example}[theorem]{Example}
\theoremstyle{remark}
\newtheorem*{unnumberedclaim}{Claim}
\numberwithin{claim}{theorem}
\newenvironment{proof of claim}
{\begin{trivlist}  \item \textit{Proof of Claim.}~} {\hfill $\Box$ (Claim)
\end{trivlist}}
\begin{document}

\title{Residue Field Domination in Real Closed Valued Fields}

\author[C. Ealy]{Clifton Ealy}
\address{Department of Mathematics, Western Illinois University, 1 University Circle, Macomb, Illinois 61455, USA}
\email{CF-Ealy@wiu.edu}
\thanks{}

\author[D. Haskell]{Deirdre Haskell}
\address{Department of Mathematics and Statistics\\
	  McMaster University \\
	  1280 Main St W. \\
         Hamilton ON L8S 4K1, Canada} 
\email{haskell@math.mcmaster.ca}
\thanks{The second author was partially supported by Discovery Grants from the Natural Sciences and Engineering Research 
Council of Canada, and wishes to 
thank MSRI, the CIMS of New York University and the Mathematics Department of the University of California Berkeley for hospitality
during some of the work.}

\author[J. Ma\v{r}\'{i}kov\'{a}]{Jana Ma\v{r}\'{i}kov\'{a}}
\address{Department of Mathematics, Western Illinois University, 1 University Circle, Macomb, Illinois 61455, USA}
\email{J-Marikova@wiu.edu}
\thanks{The third author was supported by a grant from the Simons Foundation (318364, J.M.).}

\subjclass[2010]{Primary: 03C64; Secondary: 03C60, 12J10, 12J25}

\date{\today}

\begin{abstract}
We define a notion of residue field domination for valued fields which generalizes stable domination in algebraically
closed valued fields. We prove that a real closed valued field is dominated by the sorts internal to the residue field,
over the value group, both in the pure field and in the geometric sorts. These results characterize forking and \th-forking
in real closed valued fields (and also algebraically closed valued fields). We lay some groundwork for extending these
results to a power-bounded $T$-convex theory. 
\end{abstract}
\maketitle


\section{Introduction}

The notion of domination of a theory by its stable part was developed by Haskell, Hrushovski and Macpherson in \cite{HHM2} and 
illustrated in the case of an algebraically closed valued field. It follows from the elimination of imaginaries in \cite{HHM1} that the stable
part of an algebraically closed valued field consists essentially of vector spaces over the algebraically closed residue field. In 
\cite[Chapter 12]{HHM2} it is shown that an algebraically closed valued field is dominated over its value group by
its residue field. Our goal here is to prove an appropriate analogue of these results for a certain class of ordered valued fields.
In an ordered valued field (that is, an ordered field with a convex valuation) both the value group and the residue field are ordered, 
so there is no stable part. Instead we consider a notion of domination by the residue field over the value group. We  prove the 
following theorems (the terminology is defined in the subsequent pages).

\begin{enumerate}[1)]
	\item  Over a maximal base, an ordered valued field is dominated by the value group and 
	residue field (Corollary~\ref{dom-by-valuegp-resfield}). 
	\item  Over a maximal base and its value group, a real closed valued field is dominated 
	by the sorts internal to the residue field (Corollary~\ref{dom-by-resfield-over-valuegp}).
	\item  Over a maximal base, forking and \th-forking are determined by the value group and residue field 
	(Theorem~\ref{forking-over-maximal}). 
	\item  An expansion of a maximal base in the geometric sorts is dominated by the value group and 
	residue field, and over its value group is dominated by the sorts internal to the residue field (Theorem~\ref{sorted-domination}).
\end{enumerate}
We hope to be able to extend these results to the more general situation of substructures of power-bounded $T$-convex structures
in some future work.  For this reason we work in the $T$-convex setting (see \cite{vdDL}) whenever possible.

Let $R$ be any o-minimal field (that is, an o-minimal expansion of a real closed field) in a language $\mathcal{L}_{o}$.  Expand 
$\mathcal{L}_{o}$ to $\mathcal{L}$ by adding a unary predicate $V$ for a convex subring (which we will also refer to as $V$), and 
consider $R$ as an $\mathcal{L}$-structure.  When the reduct $R|_{\mathcal{L}_o}$ is a pure real 
closed field, $\Th(R)$ is the theory of real closed valued fields, RCVF. This theory has elimination of quantifiers in an appropriate language 
\cite{CD}, and elimination of imaginaries  in an appropriate sorted language \cite{M}. In the more general setting where 
$R|_{\mathcal{L}_o}$ is an o-minimal field with theory $T$, one gets similar results if the structure $R$ is {\em $T$-convex}; that is, if 
$f(V)\subseteq V$ for all continuous functions $f\colon R\to  R$ that are $\emptyset$-definable in $\mathcal{L}_o$. In this case, $\Th(R)$ 
is universally axiomatisable (in $\mathcal{L}$ expanded by a constant for an element of 
$R^{>V}$), has quantifier elimination relative to quantifier elimination in $R|_{\mathcal{L}_o}$, and  there
is a natural way to make the residue field into a model of $T$ (see \cite{vdDL}).  It is not yet known what is a minimal sorted language
in which $\Th(R)$ has elimination of imaginaries (see \cite{HHM3}). Stronger statements about the definable sets can be made when
$R$ is {\em power-bounded}; that is, if for each definable $f\colon R\to R$ there is $\lambda$ in the field of exponents of $R$ such 
that $|f(x)|<x^\lambda$ for all sufficiently large $x$. Furthermore, if an o-minimal field is power-bounded, then every model of its 
theory is power-bounded (see for example \cite[p.~23]{vdD2} for more 
details on power-boundedness).

For most of the paper, we work in a monster model $\mathcal{R}^{\eq}$, where $\mathcal{R}$ is an $\mathcal{L}$-structure such that its 
$\mathcal{L}_o$-reduct is a power-bounded o-minimal field, and $\V$ is a $T$-convex subring (although for most results we will restrict to 
the special case of a pure RCVF). In what follows we shall refer to such structures $\R$ as power-bounded $T$-convex structures. The 
quotient $k \colon = \V/\m$, where $\m$ is the maximal ideal of $\V$, is the residue 
field of $\R$, and we write  $\res \colon \V\to k$ for the natural map. The ordered abelian group
$\R^{\times}/(\V \setminus \m)$ is the value group $\Gamma$, and we write $v\colon \R^{\times} \to \Gamma$ for the natural map. 
The valuation is also well-defined on $\RV := \R^\times/(1+\m)$, which is viewed as a 
multiplicative group. Associated to $\RV$ is an exact sequence of abelian groups (note that while $k^{\times}$ and $\RV$ are 
multiplicative groups, $\Gamma$ is additive):
$$1 \to k^{\times} \to \RV \to \Gamma \to 0,$$ where 
the map $k^{\times} \to \RV$ is inclusion (note that $x+\m =x(1+\m )$ for $x\in \R$ with $v(x)=0$), and $\RV \to \Gamma$ is the map 
$x(1+\m ) \mapsto v(x)$, where $x \in \R^{\times}$. Setwise, $x(1+\m ) \in \RV$ is just the open ball around $x$ of radius $v(x)$. 
For any $\gamma\in \Gamma$, we write $\RV_\gamma$ for the fiber above $\gamma$, and observe that  
$\RV_0$ is definably isomorphic to $k^{\times}$.
 
For any substructure $K$ of $\R$, we write  $V_K$ for its valuation ring, $\Gamma_K$ 
for its value group and $k_K$ for its residue field. Algebraic closure and definable closure are always taken in the
model-theoretic sense and in the sorted structure $\R^{\eq}$, unless stated otherwise. Note that since $\R$, $k$, and $\Gamma$ 
are ordered, algebraic closure in these sorts is equal to definable closure. For sets $A$, $B$ and a subfield $C$ of $\mathcal{R}$, 
we write $C[A,B]$ for the ring generated by $A$ and $B$ over $C$,  $C(A,B)$ for the field generated by 
$A$ and $B$ over $C$, and $C\langle A,B\rangle$ for the o-minimal structure generated by $A$ and $B$ over $C$. We shall
also write $\Gamma(A)=\dcl(A)\cap \Gamma$ and $k(A)=\dcl(A)\cap k$. In general, for any collection of sorts 
$\calS$,  we  write $\calS(A) = \dcl(A)\cap\calS$.

Given a valued field $K$, we shall write 
\begin{align*}
	\ball{op}{\gamma}{a} = \{x\in K : v(x-a) > \gamma\}, \\
	\ball{cl}{\gamma}{a} =  \{x\in K : v(x-a) \ge \gamma\}
\end{align*}
for the open (respectively closed) ball of radius $\gamma$ around $a$, where $a\in K$, $\gamma\in \Gamma_K$.
If we do not want to specify whether the ball is open or closed, we write $\Ball{\gamma}{a}$.

We use the following results about definable sets in $T$-convex theories.

\begin{fact}\label{definablesets} \cite[Theorem~12.10]{T}, \cite[Proposition~7.6]{vdD2} Let $R$ be $T$-convex with 
$T$ power-bounded and let  $S\subseteq R$ be a definable set. Then $S$ is a finite boolean combination of points, 
intervals, and open and closed balls defined over $R$.
\end{fact}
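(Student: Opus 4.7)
The plan is to combine the quantifier elimination for $T$-convex expansions with the power-bounded asymptotic dichotomy for $\mathcal{L}_o$-definable unary functions.

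First I would apply the van den Dries--Lewenberg quantifier elimination for $T$-convex theories relative to the $\mathcal{L}_o$-reduct. This reduces $S$ to being defined by a boolean combination of $\mathcal{L}_o$-formulas $\varphi(x,\bar a)$ and atomic formulas of the form $V(f(x))$, where $f$ is an $\mathcal{L}_o$-definable function with parameters in $R$. Subsets of $R$ of the first kind are already finite boolean combinations of points and intervals by o-minimality of $T$, so it suffices to analyze, for a single such $f$, the preimage $S_f := \{x \in R : v(f(x)) \geq 0\}$, and then close off under the outer boolean operations.

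Next I would run an $\mathcal{L}_o$-cell decomposition adapted to $f$, partitioning $R$ into finitely many points and open intervals on each of which $f$ is continuous and either identically zero or nowhere zero. On each nonzero cell, power-boundedness of $T$ together with the structure theorem for definable functions in power-bounded o-minimal theories yields, at each endpoint $a \in R \cup \{\pm\infty\}$, constants $c \in R^\times$ and $\lambda$ in the field of exponents of $T$ with $f(x) = c(x-a)^\lambda(1 + o(1))$ as $x$ approaches $a$ from within the cell (with the obvious modification at $\pm\infty$). Taking valuations gives $v(f(x)) = \lambda\, v(x-a) + v(c) + \eta(x)$ with $\eta(x) \in \m$ for $x$ sufficiently close to $a$, so near $a$ the condition $v(f(x)) \geq 0$ unravels to a comparison of the form $v(x-a) \gtrless \delta$ for some $\delta \in \Gamma$. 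This cuts out an open or closed ball around $a$, the complement of such a ball, or a half-line. Gluing these pictures across the finitely many cell-germs expresses $S_f$ itself as a finite boolean combination of points, intervals, and balls.

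The main obstacle is the asymptotic step. Without power-boundedness, $f$ could behave like $e^x$ or $\log x$ near an endpoint, and $S_f$ would then be a ray that is provably not a finite boolean combination of balls. Power-boundedness is precisely what forces the power-law dichotomy, making $v\circ f$ an affine function of $v(x-a)$ up to an infinitesimal error and hence making $S_f$ ball-shaped near each endpoint. Once this is granted, the remainder is bookkeeping over finitely many cells, and the class of finite boolean combinations of points, intervals, and balls is preserved by the outer boolean operations inherited from the quantifier-free description of $S$.
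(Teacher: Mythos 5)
This Fact is not proved in the paper at all: it is quoted from Tyne's thesis and from van den Dries's \emph{$T$-convexity and tame extensions II}, so there is no in-paper argument to compare yours with; I can only assess your sketch on its own. Your first two moves are the right ones: relative quantifier elimination does reduce everything to boolean combinations of $\mathcal{L}_o$-definable sets and sets $S_f=\{x: f(x)\in V\}$, and power-boundedness is indeed what rules out the exponential-type cuts. The problem is the last step, ``gluing these pictures across the finitely many cell-germs.'' Miller's growth dichotomy only gives you $f(x)=c(x-a)^{\lambda}(1+o(1))$ on an order-neighbourhood $(a,a+\delta)$ of each cell endpoint, for \emph{some} $\delta\in R^{>0}$, and $\delta$ can be very small compared to the scale at which $v\circ f$ crosses $0$. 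Outside these finitely many neighbourhoods each cell still has a closed ``middle'' segment $[p,q]$ with $p,q\in R$ on which $f$ is continuous, monotone and nonzero, and your argument says nothing there: $f$ can perfectly well cross the top of $V$ on such a segment (e.g.\ if $f(p)\in V$ while $f(q)>V$), and you must show that the resulting cut of the convex set $S_f\cap[p,q]$ is a point or a ball cut. Restricting attention to $[p,q]$ and repeating your analysis at its endpoints just reproduces the original problem for the same $f$ on a smaller interval, so the iteration need not terminate; germs at finitely many distinguished points do not determine the set globally.

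What is actually needed is a \emph{uniform} (piecewise) version of the power-law behaviour: finitely many centres $a_1,\dots,a_k$ and a partition of $R$ into finitely many pieces on each of which $f(x)=c\,(x-a_i)^{\lambda}u(x)$ with $v(u(x))=0$ (indeed $u(x)\in 1+\m$ after refining) \emph{throughout} the piece, so that $v(f(x))=v(c)+\lambda\, v(x-a_i)$ holds on the whole piece and the ball description follows everywhere, not just near the germs. Such a preparation/valuation-property statement is a genuine theorem in the power-bounded setting --- it is essentially the content of the results of Tyne and of van den Dries--Speissegger that the paper cites --- and it does not follow from the pointwise asymptotic dichotomy by bookkeeping. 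So your proposal is a reasonable outline of why the statement should be true, but as a proof it has a real gap at the gluing step, and closing that gap amounts to proving the cited preparation-type result.
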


\begin{fact}\label{residue-field-embedded} \cite[Theorem A]{vdD2}
Let $R$ be $T$-convex, and let $S\subseteq R^n$ be a definable set. Then $\res(S)\subseteq k^n$
is definable in $k$, considered as a model of $T$.
\end{fact}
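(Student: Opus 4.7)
The plan is to combine quantifier elimination for $T$-convex theories with cell decomposition, reducing the claim to atomic pieces that can be analyzed directly inside $k$ viewed as a model of $T$.

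First, I would invoke the van den Dries--Lewenberg quantifier elimination for $T$-convex structures: any $\mathcal{L}$-definable $S\subseteq R^n$ is a Boolean combination of $\mathcal{L}_o$-atomic formulas and formulas of the form $V(t(\bx))$ where $t$ is an $\mathcal{L}_o$-term. Since $\res$ is only meaningful on $V^n$ and commutes with finite Boolean operations taken inside $V^n$, it suffices to compute $\res$ on sets defined by a single such atomic formula (or its negation).

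Second, I would apply cell decomposition to refine $V^n$ into finitely many cells on each of which every $\mathcal{L}_o$-term appearing in the defining formula of $S$ takes values entirely in one of $\m$, $V\setminus\m$, or $R\setminus V$. The key input here is that $T$-convexity forces $\mathcal{L}_o$-definable continuous functions to preserve $V$, and so they descend to $\mathcal{L}_o$-definable functions on $k$, endowing $k$ with the structure of a model of $T$. Consequently, on each refined cell the residues of the defining terms are $\mathcal{L}_o$-definable functions of the coordinate residues.

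Third, on each refined cell $C$ the image $\res(C)$ is then described by a conjunction of $\mathcal{L}_o$-(in)equalities in these induced residue functions, hence is $\mathcal{L}_o$-definable in $k$. Taking the finite union of these descriptions yields $\res(S)$ as a definable subset of $k^n$, with the parameters being the residues of the parameters originally used to define $S$.

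The step I expect to be the main obstacle is the simultaneous refinement of the cell decomposition to respect the valuation predicate, i.e., arranging that every $\mathcal{L}_o$-term occurring in the defining formula of $S$ has uniform valuation behaviour on each cell. This is the technical core of the $T$-convex framework: one needs to know that cells from the ambient $\mathcal{L}_o$-cell decomposition can be further subdivided according to how their defining functions interact with the thresholds $\m$, $V\setminus\m$, and $R\setminus V$, without the decomposition exploding into infinitely many pieces. Handling this is where the specific properties of a $T$-convex valuation ring (rather than an arbitrary convex subring) have to be used.
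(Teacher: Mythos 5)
This Fact is quoted in the paper from \cite[Theorem A]{vdD2} without proof, so the comparison is with van den Dries's argument rather than with anything in the text. Your outline contains two genuine gaps. First, the reduction to atomic formulas is not available: $\res$ commutes with finite unions, but not with intersections or complements, since $\res(A\cap B)$ is in general a proper subset of $\res(A)\cap\res(B)$ (the fibres of $\res$ on $V^n$ are cosets of $\m^n$, not points). So computing $\res$ of each atomic piece of a quantifier-free defining formula does not determine $\res(S)$. (The appeal to relative quantifier elimination is fine, modulo first arranging that $T$ itself has quantifier elimination and a universal axiomatization, as in \cite{vdDL}.)

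Second, and more seriously, the key claim that on a refined cell ``the residues of the defining terms are $\mathcal{L}_o$-definable functions of the coordinate residues'' is false. $T$-convexity does give that continuous definable functions with parameters in $V$ preserve $V$, and this is how $k$ becomes a model of $T$, but such functions need not descend to $k$: take $a\in V$, $\varepsilon\in\m$ positive, and $f(x)=\max(-1,\min(1,(x-a)/\varepsilon))$. Then $f$ is continuous, $\mathcal{L}_o$-definable with parameters in $V$, and maps $V$ into $V$, yet it sends the single residue class $a+\m$ onto a set whose residues fill out all of $[-1,1]$ in $k$; hence $\res(f(x))$ is not a function of $\res(x)$, and no finite cell refinement can repair this, because the failure occurs inside one residue class, which is not $\mathcal{L}_o$-definable. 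Your anticipated obstacle (sorting cells by whether terms land in $\m$, $V\setminus\m$, or $R\setminus V$) is therefore aimed at the wrong target: the difficulty is that terms can expand one residue class across many residue classes. The actual proof of Theorem A goes a different route: after passing to a suitable elementary extension one may assume $V$ is the convex hull of an elementary substructure $R_0\preceq R$, so that $\res$ is identified with the standard part map onto $R_0\cong k$, and one then controls the traces on $R_0$ of sets definable in the pair $(R,R_0)$ using the relative quantifier elimination of \cite{vdDL} together with an analysis of definable one-variable functions; power-boundedness is not needed there (it enters only for the value group statement and the Wilkie inequality). As it stands, your steps two and three do not go through.
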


\begin{fact}\label{value-group-embedded} \cite[Theorem B]{vdD2}
Let $R$ be $T$-convex with $T$ power-bounded, and let $S\subseteq {(R^\times)}^n$ be a definable set. Then
$v(S)\subseteq {\Gamma_R}^n$ is definable in $\Gamma$, considered as an ordered vector 
space over the field of exponents.
\end{fact}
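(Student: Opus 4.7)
The plan is to combine o-minimal cell decomposition in the $\mathcal{L}_o$-reduct with the Growth Dichotomy afforded by power-boundedness, and then transfer valuative information to the value group in a uniform way. First I would use quantifier elimination for $T$-convex theories relative to the $\mathcal{L}_o$-reduct: every $\mathcal{L}$-definable subset $S\subseteq (R^\times)^n$ is a boolean combination of $\mathcal{L}_o$-definable sets and sets of the form $\{\bar{x} : f(\bar{x})\in V\}$ for $\mathcal{L}_o$-definable $f$. Applying a cell decomposition in the $\mathcal{L}_o$-reduct compatible with all the functions appearing in the defining formula, it suffices to understand $v(S\cap C)$ on a single cell $C$, which reduces to controlling $v(f(\bar{x}))$ for $\mathcal{L}_o$-definable $f$ as $\bar{x}$ varies over $C$.

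The key ingredient is the Growth Dichotomy: in a power-bounded o-minimal structure, every nonzero univariate definable function $f$ satisfies $f(x)\sim c\,x^\lambda$ as $x\to \infty$ for some $c\in R^{>0}$ and some $\lambda$ in the field of exponents $\Lambda$. This gives $v(f(x))=\lambda\cdot v(x)+v(c)$ for all sufficiently large $x$, with an analogous formula on any ray on which $f$ has definite sign. Iterating along the coordinates of a cell yields a multivariate version: on a suitable refinement of $C$, the quantity $v(f(\bar{x}))$ becomes a $\Lambda$-affine combination of $v(x_1),\dots,v(x_n)$ together with a constant in $\Gamma$. Applied to each atomic condition defining $S\cap C$, this exhibits $v(S\cap C)$ as being cut out by $\Lambda$-affine (in)equalities in $\Gamma^n$, hence definable in $\Gamma$ regarded as an ordered vector space over $\Lambda$.

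The main obstacle is establishing the uniform multivariate form of the Growth Dichotomy: while the univariate version is essentially Miller's theorem, producing a cell decomposition on which $v(f(\bar{x}))$ is \emph{globally} a $\Lambda$-affine function of $v(\bar{x})$ requires iterating the dichotomy along successive coordinates while carefully tracking parameters. Sign changes of $f$ are handled by first refining to subcells on which $f$ has constant sign (using Fact~\ref{definablesets} to partition fibres), after which the asymptotic formula applies uniformly. Once this uniform control is in place, the assembly of $v(S)$ as a definable subset of $\Gamma^n$ is a straightforward induction on $n$, using the univariate case of Fact~\ref{definablesets} pushed forward through $v$ as the base step.
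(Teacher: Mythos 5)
The paper offers no proof of this statement: Fact~\ref{value-group-embedded} is quoted directly from van den Dries's \emph{$T$-convexity and tame extensions II} (Theorem~B there), so the only question is whether your sketch is a correct independent argument. It is not, because the step you yourself identify as the main obstacle --- the ``uniform multivariate form of the Growth Dichotomy'' --- is false in the form you need it. You claim that after refining an $\mathcal{L}_o$-cell decomposition one can arrange that $v(f(\bar x))$ equals a $\Lambda$-affine combination of $v(x_1),\dots,v(x_n)$ plus a constant on each piece. Take $n=2$ and $f(x_1,x_2)=x_1-x_2$ on $\{x_1>x_2>0\}$, and suppose $P_1,\dots,P_N$ is \emph{any} finite cover by $\mathcal{L}_o(R)$-definable sets on each of which such a formula holds. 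Fix $b>0$ with $v(b)=0$ and consider the segment $\{(b+t,b):0<t<b\}$. By o-minimality of the reduct, its trace on each $P_i$ is a finite union of points and intervals in $t$, so some $P_i$ contains the points with $t\in(0,s)$ for some $s>0$. On that piece $v(x_1)=v(b+t)=0$ and $v(x_2)=0$, so your formula forces $v(x_1-x_2)=v(t)$ to be \emph{constant} on $(0,s)$, while in fact $v(t)$ takes every value $>v(s)$ as $t\to 0^+$. The failure is structural, not a bookkeeping issue: because of cancellation, the valuation of a definable function is simply not determined by the valuations of its arguments on $\mathcal{L}_o$-definable pieces, so no iteration of Miller's univariate dichotomy along coordinates can produce the decomposition you want.

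What is true is a preparation-type statement in which the pieces are themselves cut out by valuative conditions (in the spirit of Lion--Rolin and van den Dries--Speissegger preparation for polynomially bounded structures), but that is a substantial theorem in its own right, and once the pieces are no longer $\mathcal{L}_o$-definable your concluding assembly step --- that $v$ of each piece, and of the subsets cut out on it, is $\Lambda$-definable in $\Gamma$ --- essentially re-poses the statement being proved rather than reducing it. The same difficulty is hidden in your ``straightforward induction on $n$'': already for a two-dimensional cell $\{(x_1,x_2):0<x_1<1,\ 0<x_2<g(x_1)\}$, computing $v$ of the cell requires controlling $v(g(x))$ as $x$ varies within a single valuation class, which is precisely what the asymptotic dichotomy (at infinity or at a boundary point) does not give. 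So, as it stands, the proposal has a genuine gap at its central step; to repair it along these lines you would have to formulate and prove the appropriate valuation-theoretic preparation theorem for power-bounded $T$, which is where all the work in the cited result lies.
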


In a pure valued field, the dimension inequality relates the transcendence degree of a field extension to the degrees 
of the extensions of the value group and residue field. In a power-bounded $T$-convex theory, the proof of the corresponding result, 
called the Wilkie Inequality, is much more subtle. 

\begin{fact}\label{Wilkieinequality}\cite[Theorem C]{vdD2} 
Let $T$ be power-bounded with field of exponents $F$, and let $R\preceq S$ be $T$-convex structures. Then
\[
	\rk(S/R) \ge \rk(k_{S}/k_{R}) + \dim_F(\Gamma_{S}/\Gamma_{R}) .
\]
\end{fact}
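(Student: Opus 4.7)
The plan is to adapt the classical valued-field dimension inequality $\trdeg(K/k) \ge \trdeg(\bar K/\bar k) + \dim_\bQ(v(K)/v(k))$, substituting the o-minimal rank $\rk$ for $\trdeg$ and the field of exponents $F$ for $\bQ$. Set $m = \rk(k_S/k_R)$ and $n = \dim_F(\Gamma_S/\Gamma_R)$; choose $a_1,\dots, a_m \in \V_S$ whose residues $\res(a_1),\dots,\res(a_m)$ witness the rank of $k_S$ over $k_R$, and $b_1,\dots, b_n \in S^\times$ whose valuations form an $F$-basis of $\Gamma_S$ modulo $\Gamma_R$. The goal is to show that $a_1,\dots, a_m, b_1,\dots, b_n$ are $\rk$-independent over $R$; monotonicity of $\rk$ then yields $\rk(S/R) \ge m+n$.

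Assume for contradiction they are not. By o-minimal exchange, some element $c$ of the list lies in $\dcl(R \cup X)$, where $X$ consists of the remaining elements; pick an $R$-definable function $f$ with $c = f(X)$. In Case A, $c = a_i$ for some $i$: composing with $\res$ and invoking Fact~\ref{residue-field-embedded}, I would produce a $k_R$-definable expression for $\res(a_i)$ in terms of the other $\res(a_j)$'s, contradicting the choice of the $a_i$. In Case B, $c = b_j$ for some $j$: applying $v$ and invoking Fact~\ref{value-group-embedded} places $v(b_j)$ in the $F$-linear span of $\Gamma_R$ together with the $v(b_\ell)$ for $\ell\ne j$, contradicting the choice of the $b_j$.

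The main obstacle is Case A: an $R$-definable $f$ need not send $\V_S$-valued inputs to $\V_S$, and even when it does, its residue need not depend only on the residues of its arguments once the $b_j$'s (which typically lie outside $\V_S$) appear among the inputs. This is where power-boundedness is indispensable. After a cell decomposition in the $b_j$-variables, on each cell $f$ is asymptotic to a generalised monomial in the $b_j$ with exponents $\lambda_j \in F$ and coefficient a $T$-definable function $c_0$ of the $a_i$ over $R$. Computing valuations forces $\sum_j \lambda_j v(b_j)$ to land in $\Gamma_R$, so the $F$-linear independence of the $v(b_j)$ modulo $\Gamma_R$ kills every $\lambda_j$, and the residue of $f$ reduces to $c_0$ evaluated at the residues $\res(a_i)$. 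I would organise the argument as an induction on $n = \dim_F(\Gamma_S/\Gamma_R)$, treating the base case $\Gamma_S = \Gamma_R$ first (a purely o-minimal statement relying on Fact~\ref{residue-field-embedded}) and then introducing one $b$ with $v(b)\notin \Gamma_R$ at a time, so that the asymptotic analysis always involves a single distinguished variable and the monomial decomposition remains manageable.
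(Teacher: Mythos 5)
This statement is not proved in the paper at all: it is quoted as Fact~\ref{Wilkieinequality}, i.e.\ as Theorem~C of \cite{vdD2}, so there is no in-paper argument to compare yours with, and the intended ``proof'' is simply the citation. Your skeleton does resemble how the result is actually established in the literature: one reduces, by exchange and induction, to simple extensions $R\preceq R\langle a\rangle$, where the residue case is handled by (what the paper records as) Fact~\ref{non-immediate-extensions}(i) and the value-group case by Fact~\ref{non-immediate-extensions}(ii). So the overall shape is reasonable.

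The genuine gap is that your Case~A hinges on an unproved structural theorem that is essentially the whole content of the result. The assertion that, after cell decomposition, an $R$-definable function of the $b_j$'s (with the $a_i$'s as parameters) is asymptotic to a generalised monomial $c_0(a)\,b_1^{\lambda_1}\cdots b_n^{\lambda_n}$ with $\lambda_j\in F$, \emph{uniformly} in the parameters $a$ ranging over the big model $S$, is a preparation/valuation-property statement for power-bounded theories; it does not follow from Facts~\ref{definablesets}--\ref{value-group-embedded}, and proving it is precisely the hard analysis carried out by Wilkie, Tyne \cite{T} and van den Dries \cite{vdD2} (power-boundedness gives Miller-style growth dichotomies for one-variable functions over the reals, but you need the multi-parameter, nonstandard-model version, which is where Lemma~5.4 of \cite{vdD2} does its work). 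Two further steps are also asserted rather than argued: that once all $\lambda_j=0$ the residue of $f$ ``reduces to $c_0$ evaluated at $\res(a_i)$'' requires knowing that residues of $R$-definable unit-valued functions of $\V$-points factor definably through the residue map (this is Fact~\ref{non-immediate-extensions}(i), iterated, together with Fact~\ref{residue-field-embedded}, plus a uniform positive-valuation bound on the error term); and Case~B needs $v(c_0(a))\in\Gamma_R$, which fails for general $a$ unless you have already controlled $\Gamma_{R\langle a_1,\dots,a_m\rangle}$. In short, as written the proposal presupposes the theorem's key inputs; the honest route is either to cite Theorem~C of \cite{vdD2}, or to prove the simple-extension lemmas (Fact~\ref{non-immediate-extensions}) first and then run your exchange induction on top of them.
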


Now suppose  that $R\preceq R\langle a\rangle$ are power-bounded $T$-convex structures, and $a \in \mathcal{R}$ is a singleton.   If the
extension $R\langle a \rangle$ is not immediate, then by Fact~\ref{Wilkieinequality}, exactly  one of the value group and residue
field increases. There is a precise description of the increased  value group or residue field.

\begin{fact}\label{non-immediate-extensions}
Let $R\preceq R \langle a\rangle$ be models of a power-bounded $T$-convex theory.
\begin{enumerate}[i)]
	\item If $\res(a)\notin k_R$ then $k_{R\langle a\rangle} = k_R\langle \res(a)\rangle$ \cite[Lemma 5.1]{vdDL}.
	\item Assume $T$ is power bounded with field of exponents $F$. If $v(a)\notin \Gamma_R$ then 
	$\Gamma_{R\langle a\rangle} = \Gamma_R \oplus F\cdot v(a)$ \cite[Lemma 5.4]{vdD2}.
	\end{enumerate}
\end{fact}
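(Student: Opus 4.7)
The plan is to use the Wilkie inequality (Fact~\ref{Wilkieinequality}) to bound the residue field and value group extensions tightly, and then in each case identify the actual extension with the predicted one. The strategy is uniform across the two parts: first show the Wilkie inequality forces one of the summands to be exactly $1$ and the other to be $0$, and second recognize the proposed substructure as already achieving that dimension.

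For part (i), I would start by observing that $\res(a)\notin k_R$ forces $v(a)=0$ (otherwise $\res(a)=0\in k_R$), so $\res(a)$ is a legitimate element of $k_{R\langle a\rangle}$. Since $a$ is a singleton, $\rk(R\langle a\rangle/R)\le 1$, and Fact~\ref{Wilkieinequality} yields
\[
\rk(k_{R\langle a\rangle}/k_R)+\dim_F(\Gamma_{R\langle a\rangle}/\Gamma_R)\le 1.
\]
The hypothesis $\res(a)\notin k_R$ makes the first summand at least $1$, forcing equality with first summand $1$ and second $0$. Hence $k_{R\langle a\rangle}$ has rank exactly $1$ over $k_R$, while $k_R\langle \res(a)\rangle$ already sits inside $k_{R\langle a\rangle}$ as a rank-$1$ extension of $k_R$. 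Additivity of o-minimal rank in the tower $k_R\preceq k_R\langle \res(a)\rangle\preceq k_{R\langle a\rangle}$ then yields $\rk(k_{R\langle a\rangle}/k_R\langle\res(a)\rangle)=0$, so the two coincide.

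For part (ii), the argument proceeds with the roles of $k$ and $\Gamma$ swapped. I would first recall that by power-boundedness the value group $\Gamma$ is naturally an ordered $F$-vector space via $\lambda\cdot v(x):=v(x^\lambda)$ (the power functions are $\emptyset$-definable in $\mathcal{L}_o$), so the expression $\Gamma_R\oplus F\cdot v(a)$ makes sense and lies inside $\Gamma_{R\langle a\rangle}$. Fact~\ref{Wilkieinequality} again gives
\[
\rk(k_{R\langle a\rangle}/k_R)+\dim_F(\Gamma_{R\langle a\rangle}/\Gamma_R)\le 1,
\]
now with the second summand forced to be at least $1$ by $v(a)\notin \Gamma_R$. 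Hence $\dim_F(\Gamma_{R\langle a\rangle}/\Gamma_R)=1$, and since $\Gamma_R\oplus F\cdot v(a)$ is already a one-dimensional $F$-extension of $\Gamma_R$ sitting inside $\Gamma_{R\langle a\rangle}$, the dimension count forces equality.

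The main obstacle is of course packaged into Fact~\ref{Wilkieinequality}, which does the genuinely hard work in the $T$-convex setting. Granted it, the only points requiring care are the ``matching rank/dimension implies equal'' steps at the end of each part: for (i) one needs additivity of o-minimal rank along a tower of elementary extensions of power-bounded $T$-models, and for (ii) one uses that $\Gamma_{R\langle a\rangle}/\Gamma_R$ is a torsion-free $F$-vector space in which a one-dimensional subspace is pinned down by any non-zero representative. Both facts are standard in the power-bounded o-minimal context but would need to be quoted explicitly rather than inherited from the classical valued-field case.
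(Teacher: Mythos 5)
Your argument is correct as a derivation inside this paper's framework, but it is not the route the paper takes: the paper gives no proof of this Fact at all, quoting it directly from the sources ([vdDL, Lemma~5.1] for (i) and [vdD2, Lemma~5.4] for (ii)), where the proofs are direct analyses of simple extensions of $T$-convex structures. Your route instead deduces both parts from the Wilkie inequality (Fact~\ref{Wilkieinequality}): since $a$ is a singleton, $\rk(R\langle a\rangle/R)\le 1$, the relevant summand is forced to equal $1$ and the other to vanish, and then $k_R\langle\res(a)\rangle$ (respectively $\Gamma_R\oplus F\cdot v(a)$) is a dcl-closed (respectively $F$-subspace) substructure of $k_{R\langle a\rangle}$ (respectively $\Gamma_{R\langle a\rangle}$) realizing the full rank/dimension, whence equality. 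The supporting points you flag are indeed what is needed and all hold: $k_R$ and $k_{R\langle a\rangle}$ are elementary (hence dcl-closed) in $k$, rank is additive by exchange in the o-minimal residue field, $\Gamma_R$ and $\Gamma_{R\langle a\rangle}$ are $F$-subspaces because the power functions are $\emptyset$-definable, and $v(a)\notin\Gamma_R$ makes the sum with $F\cdot v(a)$ direct. Two caveats are worth recording. First, in the literature the logical order is the reverse of yours: Theorem~C of [vdD2] is proved \emph{from} these two lemmas (they handle the residual and ramified cases in the induction), so your argument is a legitimate consistency check given Fact~\ref{Wilkieinequality} as a black box, but it could not replace the citations without circularity. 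Second, part (i) as proved in [vdDL] holds for arbitrary $T$-convex theories, whereas your derivation genuinely uses power-boundedness through the Wilkie inequality, so it proves a weaker statement than the cited lemma (though exactly the statement as formulated in this paper, which assumes power-boundedness throughout).
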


Furthermore, in a power-bounded $T$-convex theory, the residue field and value group are orthogonal, in the sense of the 
following statement.
\begin{fact}\label{orthogonality}\cite[Proposition~5.8]{vdD2}
 Let $R$ be a model of a power-bounded $T$-convex theory. Any definable function from $k_R$ to $\Gamma_R$
 or from $\Gamma_R$ to $k_R$ has finite image.
 \end{fact}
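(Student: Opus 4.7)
The plan is to handle the case $f\colon k \to \Gamma$ in detail; the case $\Gamma \to k$ follows by a symmetric argument via graphs. Assume for contradiction that an $\R^{\eq}$-definable function $f\colon k \to \Gamma$ has infinite image. Then $f$ is coded by an $\R$-definable relation $S \subseteq \V \times \R^{\times}$, invariant under the equivalence relations induced by $\res$ and $v$, such that the projection to the first coordinate is all of $\V$ and, for each $x \in \V$, the set $v(S(x)) := \{v(y) : (x,y) \in S\}$ is a singleton equal to $f(\res(x))$.

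Apply cell decomposition in $\R$ to $S$. Since the image of $f$ is infinite, some cell $Z$ of $S$ projects onto an open interval $U \subseteq \V$ whose image $\res(U)$ is infinite. By taking a continuous definable selection (for instance, the midpoint of any interval fiber of $Z$), one obtains a continuous definable $g\colon U \to \R^{\times}$ with $v(g(x)) = f(\res(x))$ for every $x \in U$. Now power-boundedness enters: for each $x_0 \in U$, the asymptotic theory of power-bounded o-minimal structures, transferred to the $T$-convex setting, provides $c \in \R^{\times}$ and $\lambda$ in the field of exponents such that
\[
    g(x) = c(x - x_0)^{\lambda}(1 + \eps(x))
\]
on a small punctured neighborhood of $x_0$, with $\eps(x) \to 0$. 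Taking valuations yields $v(g(x)) = v(c) + \lambda \cdot v(x - x_0)$ for all such $x$.

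If $\lambda \neq 0$, then as $x$ ranges over $(x_0 + \m) \cap U$ the quantity $v(x - x_0)$ takes infinitely many positive values in $\Gamma$, so $v(g(x))$ fails to be constant on the residue class of $x_0$, contradicting that it depends only on $\res(x)$. Hence $\lambda = 0$ at every $x_0 \in U$, so $v \circ g$ is locally constant and, by o-minimality of $\R$, constant on the connected cell $U$. Combining this over the finitely many cells of $S$ shows $f$ has finite image, the desired contradiction. The main obstacle is justifying the local power-asymptotic form of $g$ rigorously in the $T$-convex setting (an appropriate version of Miller's growth dichotomy) and dealing with cells of $S$ whose fibers are intervals rather than points, which I sidestep above via continuous selections.
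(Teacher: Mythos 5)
You are trying to prove something the paper itself does not prove: Fact~\ref{orthogonality} is quoted from van den Dries (the cited Proposition~5.8 of [vdD]), where it is obtained by a short global model-theoretic argument, not by pointwise asymptotics. Your attempt has two genuine gaps. First, the relation $S$ and any selection $g$ from it are definable only in the valued field $(\R,V)$, which is weakly o-minimal, not o-minimal: cell decomposition with continuous cells and the local power expansion $g(x)=c(x-x_0)^{\lambda}(1+\eps(x))$ (Miller-type dichotomy) are theorems about $\mathcal{L}_o$-definable functions and are simply not available for $g$. Moreover the proposed selection is not definable as described: the fibre $\{y: v(y)=f(\res(x))\}$ is a pair of valuational annuli whose endpoints are cuts not realized in the field, so ``the midpoint of the interval fibre'' does not exist; one could invoke definable Skolem functions of $(\R,V)$ instead, but the resulting $g$ is still only $(\R,V)$-definable, and assuming o-minimal tameness for it begs the question, since the content of the fact is precisely about the new definable sets created by $V$. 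You flag this as ``the main obstacle,'' and it is indeed the heart of the matter rather than a technicality.

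Second, and fatally even if one granted the expansion: the concluding step ``$v\circ g$ is locally constant, hence by o-minimality constant on the connected cell $U$'' is false in this setting. The level sets of $v\circ g$ are not $\mathcal{L}_o$-definable, so definable connectedness of $U$ gives nothing; in fact every function of the form $x\mapsto f(\res(x))$ is automatically locally constant in the order topology, because each residue class $x_0+\m$ is order-open, and $v$ itself is locally constant on an interval while having infinite image there. Your local analysis at $x_0$ only gives constancy on a punctured neighbourhood of some radius $d>0$, which may well be smaller than the residue class, so you never compare values of $f$ at two distinct residues and no contradiction is reached; the conclusion you derive holds for every would-be counterexample. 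The efficient correct argument is global and uses the Wilkie inequality: if $f\colon k\to\Gamma$ were $R$-definable with infinite image, pass to a saturated elementary extension, choose $a$ with $f(a)\notin\Gamma_R$ (so $a\notin k_R$), and lift it to $x$ with $\res(x)=a$; by Fact~\ref{non-immediate-extensions}~i) and Fact~\ref{Wilkieinequality}, the extension $R\prec R\langle x\rangle$ has rank one and already increases the residue field, so $\dim_F(\Gamma_{R\langle x\rangle}/\Gamma_R)=0$ and hence $\Gamma_{R\langle x\rangle}=\Gamma_R$ (as $\Gamma_R$ is an $F$-vector space), contradicting $f(a)\in\Gamma(Rx)=\Gamma_{R\langle x\rangle}$; the direction $\Gamma\to k$ is handled symmetrically. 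That is, in essence, the proof in the cited source, and it is the argument you should aim to reproduce rather than the asymptotic one.
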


Our general goal is to study the extent to which types in the valued field are controlled by their restriction to different 
sorts in the structure. 
We call this property {\em domination}. It generalizes the notion of stable domination in \cite{HHM2} and is related to the notion
of compact domination \cite{HPP}.  The reader will note the definition of domination is reminiscent of the uniqueness of nonforking 
extensions in a stable theory.  Thus, in an extremely imprecise sense, the existence of domination is an instance of stable-like behavior 
in the structure.
 
Let $\calS$ and $\calT$ be collections of sorts which are stably embedded in all models of some theory.
Assume that $\calS$ and $\calT$ are {\em rosy} and hence have some notion of independence. 

\begin{definition}\label{domination-definition} Let $C\subseteq A$ be sets of parameters.  We say that $\tp(A/C)$ is {\em dominated by 
$\mathcal S$} if whenever  
$B\supset C$ with $\calS(B)$ independent from  $\calS(A)$ over $\calS(C)$, one has that $\tp(A/C\calS(B))$ implies $\tp(A/CB)$.  
Further, we say that $\tp(A/C)$ is {\em dominated by $\calS$ over $\calT$} if $\tp(A/C\calT(A))$ is dominated by $\calS$.
\end{definition}

We will also express the property of domination using automorphisms, as follows.

\begin{fact}\label{domination-automorphism-definition}
 Let $A$, $B$, $C$, $\calS$, and $\calT$ be as above, with $\calS(B)$ independent from  $\calS(A)$ over $\calS(C)$. Then the following 
 are equivalent:
  \begin{enumerate}[i)]
    \item $\tp(A/C \calT (A)\calS (B))$ implies $\tp(A/C \calT (A) B)$;
    \item given an automorphism $\sigma$ of $\R$ fixing $C\calT(A)\calS(B)$, there is an automorphism agreeing with $\sigma$ 
    on $A$ and fixing $C\calT(A)B$.
  \end{enumerate}
\end{fact}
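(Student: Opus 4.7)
The plan is to recognize this as an instance of the standard equivalence between type-implication and automorphism-extension in a saturated monster model: the hypothesis that $\calS(B)$ is independent from $\calS(A)$ over $\calS(C)$ will play no role in the equivalence itself, only in making the statement useful in applications. Writing $D = C\calT(A)\calS(B)$ and $E = C\calT(A)B$, one has $D \subseteq E$, and the claim reduces to showing that $\tp(A/D) \vdash \tp(A/E)$ if and only if every $\sigma \in \Aut(\R^{\eq}/D)$ can be replaced by some $\tau \in \Aut(\R^{\eq}/E)$ with $\tau|_A = \sigma|_A$.

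For (i) $\Rightarrow$ (ii), I would take an automorphism $\sigma$ fixing $D$ pointwise. Then $\sigma(A) \equiv_D A$, so by (i) we have $\sigma(A) \equiv_E A$. By the homogeneity of $\R^{\eq}$, there is an automorphism $\tau \in \Aut(\R^{\eq}/E)$ with $\tau(A) = \sigma(A)$, and this $\tau$ witnesses (ii). For the converse (ii) $\Rightarrow$ (i), suppose $A' \models \tp(A/D)$. By the saturation of $\R^{\eq}$, there is $\sigma \in \Aut(\R^{\eq}/D)$ with $\sigma(A) = A'$. Applying (ii) yields $\tau \in \Aut(\R^{\eq}/E)$ with $\tau(A) = \sigma(A) = A'$; therefore $A' \equiv_E A$. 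Since $A'$ was an arbitrary realization of $\tp(A/D)$, this shows $\tp(A/D) \vdash \tp(A/E)$.

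There is no real obstacle here: the statement is a formal unpacking of what it means for one type to imply another, combined with the standard dictionary between types and orbits under $\Aut(\R^{\eq})$ that holds in any sufficiently saturated and strongly homogeneous model. The one small bookkeeping point to be careful about is that the automorphisms involved are automorphisms of $\R^{\eq}$ (so they act on all the geometric sorts, not just on the field sort), which is exactly the setting in which the paper operates and in which $C\calT(A)$ and $\calS(B)$ make sense as subsets.
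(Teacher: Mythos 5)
Your proof is correct and takes exactly the approach the paper intends: this statement is given as a \emph{Fact} with no proof precisely because it is the standard dictionary, in a sufficiently saturated and strongly homogeneous $\R^{\eq}$, between type implication over $D=C\calT(A)\calS(B)\subseteq E=C\calT(A)B$ and extendability of automorphisms, with the independence hypothesis indeed playing no role in the equivalence. The only minor quibble is that in the (ii) $\Rightarrow$ (i) direction the automorphism $\sigma$ with $\sigma(A)=A'$ comes from strong homogeneity (saturation only guarantees that realizations of $\tp(A/D)$ exist inside the monster), which is the property you are actually invoking.
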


In Corollary~\ref{dom-by-valuegp-resfield}, we use this terminology with $\calS = \Gamma\cup k$. In 
Corollary~\ref{dom-by-resfield-over-valuegp}, $\calS$ consists of (some of) the sorts internal to the residue field and $\calT$ is the value 
group. In both cases, the notion of independence is given by \th-forking (we recall the definition in Section~3, or see \cite{adler}). 
We will only need to use  \th-independence applied to parameters from $k$, $\Gamma$, or $\RV_\gamma$, all of which are o-minimal, 
which allows us, for most of the paper, to use the more concrete description below \cite[Section~6]{EO}.   

\begin{definition}\label{o-minimal-basis-and-rank}
Given any (possibly infinite) tuple of parameters $(a_i)$ from a definable set whose induced structure is o-minimal, and a set of parameters 
$C$, a subtuple $(a_{i_j})$ is said to be an {\em o-minimal basis} of $(a_i)$ over $C$ if $(a_i) \in \acl(C(a_{i_j}))$ and the $a_{i_j}$ are 
algebraically independent over $C$.  When the length of $(a_{i_j})$ is finite, this length is called the {\em rank} 
of the tuple $(a_i)$ over $C$.  Any definable set has rank equal to the maximal rank of a tuple from the definable set, and a type has 
rank equal to the rank of any tuple realizing the type.
\end{definition}

\begin{fact}\label{thorn-forking-fact}
 The following are equivalent when the parameters $A$ come from a set with induced o-minimal structure:
  \begin{enumerate}[i)]
    \item $A\thind_C B$;
    \item for every finite tuple $a$ from $A$, the rank of $\tp(a/CB)$ equals the rank of $\tp(a/C)$.
  \end{enumerate}
\end{fact}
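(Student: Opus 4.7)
The plan is to reduce the equivalence to the case of a single finite tuple and then invoke the characterization of thorn-independence by algebraic-closure dimension in an o-minimal theory, due essentially to Ealy and Onshuus.

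First, I would use the finite character of thorn-independence to reduce to the case where $A = a$ is a finite tuple. Recall that $A \thind_C B$ holds if and only if $a \thind_C B$ for every finite tuple $a \subseteq A$; this is a standard axiom of thorn-independence. Condition (ii) is also of finite character by design, as it is already phrased in terms of finite tuples. So it suffices to prove (i) $\iff$ (ii) when $A$ is itself a finite tuple.

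Second, since the parameters lie in a stably embedded set with induced o-minimal structure, thorn-forking for tuples from that set can be computed inside the induced structure. Working in this o-minimal theory, the Ealy--Onshuus theorem characterizes thorn-independence via algebraic-closure dimension: $a \thind_C B$ iff a maximal $\acl$-independent subtuple of $a$ over $C$ remains $\acl$-independent over $CB$; equivalently, the acl-dimensions of $\tp(a/C)$ and $\tp(a/CB)$ coincide. In an o-minimal theory $\acl$ equals $\dcl$ on the real sorts, and a maximal algebraically-independent subtuple of $a$ over a set $D$ is precisely an o-minimal basis in the sense of Definition~\ref{o-minimal-basis-and-rank}. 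Hence acl-dimension agrees with the rank of Definition~\ref{o-minimal-basis-and-rank}, and (i) $\iff$ (ii) follows for finite tuples.

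The main obstacle is the Ealy--Onshuus identification of thorn-dimension with acl-dimension in o-minimal theories; this is the real content of the fact. The direction ``rank equality implies thorn-independence'' is the more routine: one verifies that the relation ``constant acl-dimension'' satisfies the axioms of an abstract independence relation (monotonicity, symmetry, transitivity, extension, local character) and invokes the minimality of thorn-independence among such relations in a rosy theory. The harder direction is that a strict drop in rank forces thorn-dividing: given a formula $\varphi(x,b) \in \tp(a/CB)$ carving out a definable set of strictly smaller o-minimal dimension than the $C$-locus of $a$, one uses cell decomposition in the o-minimal structure, combined with an appropriate translation or deformation of cells along a $C$-indiscernible sequence $(b_i)$ of realizations of $\tp(b/C)$, to produce pairwise inconsistent instances $\varphi(x,b_i)$ and thereby witness thorn-dividing explicitly.
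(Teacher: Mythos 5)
Your overall plan coincides with the paper's own treatment of this statement: Fact~\ref{thorn-forking-fact} is given without proof and rests on the characterization of \th-forking in o-minimal structures from \cite[Section~6]{EO}, combined with finite character of \th-independence and stable embeddedness of the sorts in question --- exactly the reduction you describe. So in substance you are invoking the same key external result that the authors do.

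Two points in your sketch nevertheless need repair. First, your mechanism for the direction ``strict drop in rank forces \th-dividing'' --- producing $m$-inconsistent instances $\varphi(x,b_i)$ along a $C$-indiscernible sequence of realizations of $\tp(b/C)$ --- witnesses ordinary dividing, not \th-dividing: \th-dividing requires $m$-inconsistency of $\{\varphi(x,\tilde b):\tilde b\models\tp(b/Cd)\}$ for some $d$, i.e.\ over \emph{all} conjugates, and since \th-forking implies forking but not conversely, exhibiting dividing is not enough. The correct, and in fact easier, argument goes through algebraicity: if the rank of $\tp(a/CB)$ drops, then for some subtuple $a'$ of $a$ and some coordinate $a_i$ one has $a_i\in\acl(CBa')\setminus\acl(Ca')$; an algebraic formula whose parameter is non-algebraic over the (possibly enlarged) base strongly divides, hence \th-forks, and base monotonicity of $\thind$ then contradicts $A\thind_C B$. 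In particular you have the relative difficulty of the two directions reversed: the direction that genuinely uses the o-minimal input (exchange for $\acl$, or Adler's minimality of \th-independence among strict independence relations \cite{adler}) is ``rank preservation implies \th-independence''. Second, for that direction your appeal to ``computing \th-forking inside the induced structure'' is too quick, since $B$ and $C$ need not lie in the o-minimal set and the rank of Definition~\ref{o-minimal-basis-and-rank} is computed with $\acl$ in the ambient structure; one needs a stable-embeddedness transfer of parameters (in the spirit of Lemma~\ref{independence_in_k_and_Gamma}) or a direct verification that rank preservation yields an independence relation on the relevant tuples over arbitrary base sets. Both issues are repairable, but as written the final paragraph of your proposal would not establish \th-dividing.
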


\begin{example}\label{ominimal-thorn}
In a real closed field, algebraic independence in the sense of model theory is the same as field theoretic algebraic independence. 
Thus $A\thind_C B$ if and only if any tuple from $\acl(CA)$ which is algebraically dependent over $\acl(CB)$ is also algebraically 
dependent over $\acl(C)$. It is also equivalent to say that  $\acl(CA)$ is linearly disjoint from $\acl(CB)$ over $\acl(C)$; the 
algebraic closure of the base is necessary here.  Clearly, in a divisible ordered 
abelian group, $A\thind_C B$ if and only if $\acl(CA) \cap \acl(CB) = \acl(C)$.
\end{example}

\noindent Combining Definition~\ref{domination-definition} with Fact~\ref{domination-automorphism-definition}, 
we see that $\tp(A/C)$ is dominated by $k$  over $\Gamma$ if, whenever one has $B$ with $k(B) \thind_{k(C)} k(A)$, and an 
automorphism $\sigma:A\to A'$ fixing $C\Gamma(A)k(B)$, then there is an automorphism $\tau:A\to A'$ fixing $C\Gamma (A) B$.  
Note that $k(C)=k(C\Gamma(A))= k(C\Gamma(A'))$ by Fact~\ref{orthogonality}.

\smallskip
Recall that a definable set $E$ is {\em internal} to
a definable set $D$ if there is a  finite set $A$ such that $E\subseteq \dcl(D\cup A)$.

\begin{definition}\label{residue-field-internal}
Given a set $B$, and $S\subseteq \Gamma$, 
we define $$\kInt_{S}^B = \acl(k(B) \{ \RV_{\gamma} (B)\}_{\gamma \in S}).$$
\end{definition}

\noindent As $k(B)$ and $\RV_{\gamma} (B)$, where $\gamma \in S$, are all internal to the residue field, so is $\kInt_{S}^B$.
In the ACVF case,  $\acl(C\kInt_{\Gamma_L}^M)$ can be shown to be precisely the part of $M^{\eq}$ which is internal to the residue 
field and contained in sets definable over $C$ and $\Gamma(L)$ \cite[12.9]{HHM2}.  This may also be true in the RCVF case, but we 
have not investigated it. Since the residue field is stably embedded, and the function witnessing the internality of $RV_\gamma$ uses 
parameters from within $RV_\gamma$, $\kInt_{S}^B$ is also stably embedded.  

In the case where $B$ is a model, any element of $\RV_{\gamma}(B)$ is definable over $k_B$ from any other element. This is clearly 
true for $\gamma =0$, and is obtained for arbitrary $\gamma$ using the $b(1+\m )$-definable bijection $\RV_{0} (B)\to \RV_{\gamma} (B)$ 
given by $x(1+\m )\mapsto b x (1+\m )$, where $v(b)=\gamma$ and $v(x)=0$. Thus, if for each $\gamma \in S$, we let $d_\gamma$ 
be an element of  $\RV_{\gamma} (B)$, then
\[
	\kInt_{S}^B = \acl(k_B \{d_{\gamma}\}_{\gamma \in S}) .
\]

The following proposition corresponds to \cite[12.9, 12.10]{HHM2} in the context of algebraically closed valued fields.
The proof given there uses Morley rank, which is not available to us in the ordered field context.

\begin{proposition}\label{real-prop-12.10}
Let $\mathcal{R}$ be a power-bounded $T$-convex structure with field of exponents $F$, and let 
$L$ and $M$ be substructures of $\mathcal{R}$. Let $C$ be a common substructure of $L$ and $M$, 
and suppose that $\Gamma_L \subseteq \Gamma_M$.  The following are equivalent.
 \begin{enumerate}[i)]
	 \item For some (equivalently any) choice of elements $(a_i)$ and $(b_j)$ from $L$ and $(e_i)$ from $M$ such that $(v(a_i))$ 
	 is an $F$-basis of $\Gamma(L)$ over $\Gamma(C)$, $(\res(b_j))$  is an o-minimal basis of $k(L)$ over $k(C)$, and for each
	 $i$, $v(e_i)=v(a_i)$, and for all finite subtuples $a_1, \dots, a_r$,  $b_1, \dots, b_s$, the sequence
\[
	\res(a_1/e_1), \dots,\ res(a_r/e_r), \res(b_1), \dots, \res(b_s)
\]
is algebraically independent over $k_M$ (in the model-theoretic sense);
	\item $\kInt_{\Gamma_L}^L$ is algebraically independent from $\kInt_{\Gamma_L}^M$ over $C\Gamma_L$;
	  \item $\kInt_{\Gamma_L}^L \thind_{C\Gamma_L} \kInt_{\Gamma_L}^M$.
\end{enumerate}
\end{proposition}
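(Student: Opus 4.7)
The plan is to establish (ii)~$\Leftrightarrow$~(iii) formally, and then prove (i)~$\Leftrightarrow$~(ii), where the real content lies.

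For (ii)~$\Leftrightarrow$~(iii): both $\kInt_{\Gamma_L}^L$ and $\kInt_{\Gamma_L}^M$ are residue-field internal, so they live in sorts whose induced structure is (a reduct of) the o-minimal structure on $k$ provided by Fact~\ref{residue-field-embedded}. Since $C\Gamma_L$ already contains $\Gamma_L$ as a subset, Fact~\ref{thorn-forking-fact} combined with Example~\ref{ominimal-thorn} shows that thorn-independence of these two sets over $C\Gamma_L$ amounts to model-theoretic algebraic independence over $\acl(C\Gamma_L)$, which is exactly the content of (ii).

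For (i)~$\Leftrightarrow$~(ii), I would first identify tractable generating sets. Because $\RV_\gamma$ is a torsor under $k^\times\cong \RV_0$, the fiber $\RV_\gamma(L)$ is determined as a set by any single representative in $L^\times$ of valuation $\gamma$ together with $k_L$. Using the $F$-module structure on $\RV$ available in the power-bounded setting, every element of $\RV_\gamma(L)$ for $\gamma\in\Gamma_L$ can be written in terms of $k_L$, powers in $F$ of the chosen $a_i$, and a representative of $\RV_{\gamma_C}(C)$ for some $\gamma_C\in\Gamma_C$; since such representatives already lie in $C$, one obtains
\[
    \kInt_{\Gamma_L}^L = \acl\bigl(k_L,\{a_i(1+\m)\}_i\bigr), \qquad \kInt_{\Gamma_L}^M = \acl\bigl(k_M,\{e_i(1+\m)\}_i\bigr).
\]

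The key identity lives in $\RV_{v(a_i)}$: since $v(a_i)=v(e_i)$, the ratio $a_i/e_i$ lies in $V^\times$ and
\[
    a_i(1+\m) = \res(a_i/e_i)\cdot e_i(1+\m).
\]
Hence over $\kInt_{\Gamma_L}^M$ each $a_i(1+\m)$ is interdefinable with the residue $\res(a_i/e_i)\in k$. Combined with $k_L=\acl(k_C,\{\res(b_j)\})$, this shows that algebraic independence of $\kInt_{\Gamma_L}^L$ from $\kInt_{\Gamma_L}^M$ over $\acl(C\Gamma_L)$ reduces to algebraic independence of the tuple $\{\res(a_i/e_i),\res(b_j)\}$ over the $k$-part of $\acl(C\Gamma_L,\kInt_{\Gamma_L}^M)$. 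By the orthogonality of $k$ and $\Gamma$ (Fact~\ref{orthogonality}) and the fact that the fibers $\RV_\gamma$ are pure torsors over $k^\times$, that $k$-part collapses to $k_M$, yielding exactly condition~(i).

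The main technical obstacle is this last reduction. Verifying that the $k$-part of $\acl(C\Gamma_L,\kInt_{\Gamma_L}^M)$ equals $k_M$ combines Fact~\ref{orthogonality} with a careful torsor argument ruling out new $k$-elements arising either from $\Gamma_L$ itself or from fibers $\RV_\gamma$ lying outside $k^\times$. Equally delicate is the identification $\kInt_{\Gamma_L}^L = \acl(k_L,\{a_i(1+\m)\}_i)$, which genuinely exploits the $F$-module structure on $\RV$ furnished by power-boundedness; without it one cannot reach every $\gamma\in\Gamma_L$ from the $F$-basis $(v(a_i))$.
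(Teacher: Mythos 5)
Your treatment of (ii)$\Leftrightarrow$(iii) and your key identity $a_i(1+\m)=\res(a_i/e_i)\cdot e_i(1+\m)$ (interdefinability of $a_i(1+\m)$ with $\res(a_i/e_i)$ over a base containing $e_i(1+\m)$) do match the paper. But the central step of the paper is missing. Your reduction asserts that independence of $\kInt_{\Gamma_L}^L$ from $\kInt_{\Gamma_L}^M$ over $C\Gamma_L$ ``reduces to algebraic independence of the tuple $\{\res(a_i/e_i),\res(b_j)\}$'' over a base containing $\kInt_{\Gamma_L}^M$. This conflates two different statements: (ii)/(iii) say only that tuples from $\kInt_{\Gamma_L}^L$ suffer \emph{no drop} in rank when the base grows from $C\Gamma_L$ to $C\Gamma_L\kInt_{\Gamma_L}^M$, whereas (i) asserts \emph{full} rank $r+s$ of the specific tuple over $k_M$. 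These coincide only once one knows that the generating tuple $a_1(1+\m),\dots,a_r(1+\m),\res(b_1),\dots,\res(b_s)$ already has rank $r+s$ over the small base $C\Gamma_L$. That is precisely the Claim on which the paper's proof turns, and it is not automatic: it is proved by computing $\Gamma(C(d_i)_{i\ne i_0}(b_j))$ and $k(C(d_i)(b_j)_{j\ne j_0})$ via Fact~\ref{non-immediate-extensions}, and then upgrading independence over $C$ to independence over $C\Gamma_L$ using orthogonality (Fact~\ref{orthogonality}). Nothing in your proposal plays this role, so in particular (ii)$\Rightarrow$(i) is not established: if your tuple were algebraically dependent over $C\Gamma_L$, (ii) could still hold while the dependence persists over $k_M$, killing (i).

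Two further points. First, the identities $\kInt_{\Gamma_L}^L=\acl(k_L,\{a_i(1+\m)\}_i)$ and $\kInt_{\Gamma_L}^M=\acl(k_M,\{e_i(1+\m)\}_i)$ are not correct as written: the fibers $\RV_\gamma(L)$ and $\RV_\gamma(M)$ for $\gamma$ with nontrivial $\Gamma(C)$-component need representatives coming from $C$, so at best these hold with $C$ (or $C\Gamma_L$) added to the base; since the eventual independence is over $C\Gamma_L$ this is repairable, but it must be said, and the $F$-power structure on $\RV$ you invoke should be checked to be definable without new parameters. Second, the step you yourself flag as the main obstacle --- that the $k$-part of $\acl(C\Gamma_L\kInt_{\Gamma_L}^M)$ collapses to $k_M$ --- is left unproven, and orthogonality plus ``torsor arguments'' is only a plan, not a proof. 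The paper never needs to compute that set: it uses stable embeddedness of $\kInt_{\Gamma_L}^M$ to pass from rank over $\kInt_{\Gamma_L}^M$ to rank over $M$, and then the interdefinability of $a_i(1+\m)$ with $\res(a_i/e_i)$ over $M$ to land in condition (i). So the torsor bookkeeping is in the right spirit, but as it stands the argument has a genuine gap (the rank-$(r+s)$ Claim over $C\Gamma_L$) and an unproven collapse claim where the paper takes a simpler route.
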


\begin{proof}
Let $(a_i)$, $(b_j )$, and $(e_i)$ be sequences as in the hypothesis of the proposition.
For each $i$, we let $d_i=\ball{op}{v(a_i)}{a_i}$. 
\begin{unnumberedclaim}
The sequence  $d_1,\ldots,d_r,\res(b_1),\ldots,\res(b_s)$ is algebraically independent over $C \Gamma_L$.
\end{unnumberedclaim}
\begin{proof of claim}
We first show algebraic independence over $C$. 
Assume to the contrary that $d_{i_0}\in\acl(C(d_i)_{i\ne i_0}(b_j))$. Then $v(a_{i_0})\in \Gamma(C(d_i)_{i\ne i_0}(b_j))$. However, 
it follows from Fact~\ref{non-immediate-extensions} ii) that
\[
	\Gamma(C(d_i)_{i\ne i_0}(b_j)) = \Gamma(C) \oplus F((v(a_i)_{i\ne i_0}) .
\]
This contradicts the independence of $(a_i )$. 
Similarly, it follows from Fact~\ref{non-immediate-extensions}~i)
that 
\[
	k(C(d_i)(b_j)_{j\ne j_0}) = k(C)\langle \res(b_j)_{j\ne j_0} \rangle .
\]
 If some $\res(b_{j_0})\in \acl(C,(d_i)(b_j)_{j\ne j_0})$ then 
$\res(b_{j_0}) \in k(C)\langle \res(b_j)_{j\ne j_0} \rangle$, which would contradict the assumption of independence on the $\res(b_j )$.

We now show that $d_1,\ldots,d_r,\res(b_1),\ldots,\res(b_s)$ is algebraically independent over $C\Gamma_L$.
If not, then there is some $k$ with $1\le k \le r$, an element $x\in \{d_1,\ldots,d_r,\res(b_1),\ldots,\res(b_s)\}$
 and a non-constant function $f$ definable with parameters in 
 $\{d_1,\ldots,d_r,\res(b_1),\ldots,\res(b_s)\}\setminus \{x\} \cup \{v(a_1), \dots, v(a_{k-1})\}$ with $f(v(a_k))=x$.
This would contradict Fact~\ref{orthogonality}.

\end{proof of claim}

Now we prove the equivalence of the three statements. The equivalence of ii) and iii) is Fact~\ref{thorn-forking-fact}. Note that the o-minimal 
rank of $k(C b_1,\ldots,b_s )$ over $k(C)$  
equals $s$, so by the Wilkie inequality, $$\rk(\Gamma(Cb_1,\ldots,b_s)/\Gamma(C)) = 0.$$ 
By the above claim, $\kInt_{\Gamma_L}^L$ is algebraically independent from $\kInt_{\Gamma_L}^M$ over 
$C\Gamma_L$  if and only if some (equivalently every) sequence $d_1, \ldots, d_r,$ $\res(b_1),$ $ \ldots, \res(b_s)$  has 
rank $r+s$ over $\kInt_{\Gamma_L}^M$, which, by stable embeddedness of 
$\kInt_{\Gamma_L}^M$, happens if and only if the sequence $d_1,\ldots,d_r,$ $\res(b_1)$, $\ldots,\res(b_s)$  
has rank $r+s$ over $M$. As each $d_i$ is interdefinable over $M$ with $\res(a_i/e_i)$,  the sequence 
$d_1,\ldots,d_r,$ $\res(b_1)$, $\ldots,\res(b_s)$  has rank $r+s$ over $M$ if and only if
$\res(a_1/e_1),\ldots,$ $\res(a_r/e_r),\res(b_1),$ $\dots,\res(b_s)$ 
has rank $r+s$ over $M$,  which is equivalent to i).
\end{proof}

\section{Isomorphism theorems}

We begin with some remarks about separated bases over a maximal subfield.

\begin{definition} 
Let $C\subseteq \R$ be a valued field. We say that a finite sequence $m_1,\ldots,m_n \in \R$, is {\em separated over $C$} if 
$$v(\sum_{i=1}^n c_i m_i) = \min\{v(c_i) + v(m_i):1\le 1\le n \}$$ 
for all $c_1,\ldots,c_n\in C$. 
\end{definition}

\begin{definition}
A valued field $C$ is {\em maximal} if it has no proper immediate extension. It is {\em spherically complete} if every 
non-empty chain of balls defined over $C$ has an element in $C$. 
\end{definition}

The fact that a field is maximal if and only if it is spherically complete dates to Kaplansky and Ostrovski (see \cite[Theorem 8.28]{K}). 
The Hahn field $\bR((\bQ))$ is an example of a maximal real closed field. It follows from results in \cite{T} that any power-bounded 
$T$-convex theory has a maximal model; details of the proof are not relevant for this article.
The definition of spherical completeness can be extended to a finite vector space over the valued field $C$ in the following way.

\begin{lemma}\label{spherically-complete-group}
Let $C$ be a spherically complete valued field, and assume that the tuple $m=(m_1,\ldots,m_n)$ is separated over $C$.
Then  the valued group $$C^n \cdot m =\{ \sum_{i=1}^n c_i m_i: c_i\in C\}$$ is spherically complete, in the sense that every
non-empty chain of balls with centers in $C^n \cdot m$ and radii in $v(C^n \cdot m)$ has an element in $C^n \cdot m$.
\end{lemma}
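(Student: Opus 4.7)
The plan is to reduce spherical completeness of the $C$-module $C^n \cdot m$ to that of the base field $C$ by working coordinate by coordinate, using separation to translate valuations of sums into coordinate-wise valuations. Given a non-empty chain of balls $\Ball{\gamma_\alpha}{x_\alpha}$ with centers $x_\alpha = \sum_{i=1}^n c_{i,\alpha} m_i \in C^n\cdot m$ and radii $\gamma_\alpha \in v(C^n \cdot m)$, I would produce, for each fixed $i$, a chain of balls $\Ball{\gamma_\alpha - v(m_i)}{c_{i,\alpha}}$ in $C$, and then try to reassemble coordinate-wise limits into a point of the original intersection.

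The first thing to verify is that, for each $i$, the collection of $i$-th coordinate balls really is a chain. If $\Ball{\gamma_\beta}{x_\beta} \subseteq \Ball{\gamma_\alpha}{x_\alpha}$, then in particular $x_\beta$ lies in the larger ball, so $v(x_\beta - x_\alpha) \geq \gamma_\alpha$; by the separation hypothesis,
\[
	\min_{i}\bigl( v(c_{i,\beta} - c_{i,\alpha}) + v(m_i)\bigr) \geq \gamma_\alpha ,
\]
and hence $v(c_{i,\beta} - c_{i,\alpha}) \geq \gamma_\alpha - v(m_i)$ for every $i$. Combined with $\gamma_\beta \geq \gamma_\alpha$, this gives the desired coordinate-wise containment (and the open/closed distinction transfers routinely, since the radius shifts are uniform). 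Spherical completeness of $C$ then yields, for each $i$, an element $c_i \in C$ lying in every ball of the $i$-th chain.

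It remains to set $x := \sum_{i=1}^n c_i m_i \in C^n \cdot m$ and check that $x$ lies in $\Ball{\gamma_\alpha}{x_\alpha}$ for all $\alpha$. By construction, $v(c_i - c_{i,\alpha}) \geq \gamma_\alpha - v(m_i)$ for each $i$ and $\alpha$, so separation gives
\[
	v(x - x_\alpha) \;=\; \min_i\bigl(v(c_i - c_{i,\alpha}) + v(m_i)\bigr) \;\geq\; \gamma_\alpha ,
\]
as needed. The only point requiring mild care is that the shifted radii $\gamma_\alpha - v(m_i)$ need not a priori lie in $v(C)$; but this is harmless, since maximality of $C$ guarantees a common point for any chain of balls in $C$ regardless of where their radii sit. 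I expect this to be the only subtle step; the rest is a direct bookkeeping argument driven by separation.
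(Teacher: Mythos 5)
Your argument is correct, and it is exactly the ``straight-forward calculation'' that the paper leaves to the reader: translate the chain coordinatewise via separatedness, use spherical completeness of $C$ in each coordinate, and reassemble using separatedness again. The one subtle point you flag is handled correctly, since for a spherically complete $C$ a chain of balls with centers in $C$ but radii outside $v(C)$ still meets $C$ (each such ball intersected with $C$ is an intersection of genuine $C$-balls, and these all contain the relevant centers, hence form a chain over $C$).
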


\begin{proof}
The proof is a straight-forward calculation.
\end{proof}

It is also well-known that a finite-dimensional vector space over a maximal field has a separated basis  
(see \cite[Lemma 2.6]{H} for a proof). 
Indeed, the following stronger statement is true, as shown in the proof of \cite[Proposition 12.11]{HHM2}.

\begin{fact}\label{factseparated}
Let  $L$  and $M$ be valued field extensions of the maximal
valued field $C$ such that $\Gamma(L)\cap \Gamma(M) =\Gamma(C)$ and $k_L$ and $k_M$ are linearly 
disjoint over $k_C$. Then, for any finite-dimensional subspace $U$ of $M$ over $C$, there is a separated basis
$m = (m_1,\ldots,m_n)$ for $U$ over $C$ which is also separated over $L$. 
Furthermore,
\begin{align*}
	 v(L^n  \cdot m) \cap \Gamma(M) = \bigcup_{1\leq i \leq n} \big( v(m_i) + \Gamma(C) \big). 
\end{align*}
\end{fact}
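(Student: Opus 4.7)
The plan is to start from the existence (cited as \cite[Lemma 2.6]{H}) of a $C$-separated basis $m_1,\dots,m_n$ for $U$, and then to show that, after a preparatory rescaling, such a basis is automatically separated over $L$ as well. First, I would partition $\{1,\ldots,n\}$ into \emph{blocks} by the coset of $v(m_i)$ in $\Gamma(M)/\Gamma(C)$; within each block $B$ I would multiply each $m_i$ by a suitable element of $C^{\times}$ so that all the $m_i$ in $B$ share a common valuation $\gamma_B$. Separation over $C$ and the $C$-span of $m$ are unchanged.

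Given $\ell_1,\dots,\ell_n\in L$, set $\mu_B=\min\{v(\ell_i):i\in B\}$ and $J_B=\{i\in B:v(\ell_i)=\mu_B\}$, and pick $t_B\in L$ and $s_B\in M$ with $v(t_B)=\mu_B$ and $v(s_B)=\gamma_B$. The within-block identity I would prove is
\[
	v\Big(\sum_{i\in B}\ell_i m_i\Big)=\mu_B+\gamma_B,
\]
which, after dividing by $t_B s_B$, reduces to the nonvanishing of $\sum_{i\in J_B}\res(\ell_i/t_B)\,\res(m_i/s_B)$ in the residue field. Here $C$-separation of $m$ forces the $\res(m_i/s_B)\in k_M$ (for $i\in B$) to be $k_C$-linearly independent; linear disjointness of $k_L$ and $k_M$ over $k_C$ then promotes this to $k_L$-linear independence in the compositum, and since each $\res(\ell_i/t_B)$ for $i\in J_B$ is a nonzero element of $k_L$, the sum is nonzero.

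Across blocks, I would argue that the block-valuations $\alpha_B:=\mu_B+\gamma_B$ are pairwise distinct: $\alpha_{B_1}=\alpha_{B_2}$ would force $\mu_{B_1}-\mu_{B_2}=\gamma_{B_2}-\gamma_{B_1}\in\Gamma(L)\cap\Gamma(M)=\Gamma(C)$, contradicting the choice of distinct $\Gamma(C)$-cosets. This yields $v(\sum_i\ell_im_i)=\min_B\alpha_B=\min_iv(\ell_im_i)$, i.e.\ separation over $L$. For the furthermore clause, the inclusion $\supseteq$ is immediate by choosing $\ell_i\in C^{\times}$ with prescribed valuation and the other coefficients zero; for $\subseteq$, separation over $L$ writes any $v(\sum\ell_im_i)\in\Gamma(M)$ as $v(\ell_{i_0})+v(m_{i_0})$ for some $i_0$, whence $v(\ell_{i_0})\in\Gamma(L)\cap\Gamma(M)=\Gamma(C)$, placing the total valuation in $v(m_{i_0})+\Gamma(C)$. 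The principal obstacle is the within-block step: one must carefully extract $k_C$-linear independence of residues from $C$-separation (after the rescaling), and then deploy linear disjointness precisely, taking care that the $\ell_i$-coefficients used to reduce modulo $\m$ are genuine units of $\V_L$. Everything else is bookkeeping driven by the two structural hypotheses on $\Gamma$ and on $k$.
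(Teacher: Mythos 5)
Your proposal is correct and follows essentially the same route as the source the paper cites for this fact (the proof of Proposition 12.11 in Haskell--Hrushovski--Macpherson): rescale a $C$-separated basis so that valuations agree within each coset of $\Gamma(C)$, use linear disjointness of $k_L$ and $k_M$ over $k_C$ to get separation within a block, and use $\Gamma(L)\cap\Gamma(M)=\Gamma(C)$ to keep distinct blocks from interfering, with the ``furthermore'' clause falling out exactly as you describe. The only point to watch is that the rescaling and the inclusion $\supseteq$ use elements of $C^\times$ (resp.\ $L^\times$) realizing the relevant values, i.e.\ one should read $\Gamma(C)$, $\Gamma(L)$ as the actual value groups (as in the intended applications, where the fields are real closed, the two readings coincide).
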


For the rest of this section we assume that $\R$ is a pure real closed valued field. We now prove 
our first domination statement, in the formulation involving automorphisms.

\begin{theorem}\label{real-prop-12.11}
Let $C$, $L$, $M$ be ordered valued fields with $C$ maximal and $C\subseteq L \cap M$. Assume that 
$\Gamma_L\cap \Gamma_M = \Gamma_C$ and $k_L$ is linearly disjoint from $k_M$ over $k_C$. Suppose there
are automorphisms $\sigma$ and $\tau$ of $\R$ fixing $C\Gamma_L k_L$ with $\sigma(L)=L'$, $\tau(M)=M'$. Then there
is an automorphism of $\R$ which maps $N=C(L,M)$ to $N'=C(L',M')$ and restricts to $\sigma$ on $L$ and $\tau$ 
on $M$. Furthermore, $\Gamma_N$ is the group generated by $\Gamma_L$ and $\Gamma_M$, and $k_N$ is
the field generated by $k_L$ and $k_M$.
\end{theorem}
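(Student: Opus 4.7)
The plan is to construct $\rho$ as an amalgamation of $\sigma|_L$ and $\tau|_M$ over $C$ using separated bases to pin down the valuation on $N = C(L,M)$, and then to extend to an automorphism of the monster by saturation.

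First I verify $L \cap M = C$: the intersection has $\Gamma_{L \cap M} \subseteq \Gamma_L \cap \Gamma_M = \Gamma_C$ and $k_{L \cap M} \subseteq k_L \cap k_M = k_C$ (linear disjointness of $k_L, k_M$ over $k_C$ forces trivial intersection in characteristic zero), so $L \cap M / C$ is an immediate extension and maximality of $C$ forces equality. Next, for each finite-dimensional $C$-subspace $U \subseteq M$, Fact~\ref{factseparated} produces a basis $m = (m_1, \ldots, m_n)$ of $U$ that is separated over $L$, and separation implies $L$-linear independence of $m$. Thus $L \otimes_C U \hookrightarrow L \cdot U \subseteq N$, and passing to the direct limit $L \otimes_C M \hookrightarrow N$: the fields $L$ and $M$ are linearly disjoint over $C$. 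Define $\rho_0 \colon L \otimes_C M \to L' \otimes_C M'$ by $\ell \otimes m \mapsto \sigma(\ell) \otimes \tau(m)$ (well-defined since $\sigma|_C = \tau|_C = \id$); this extends uniquely to a field isomorphism $\rho \colon N \to N'$ restricting to $\sigma$ on $L$ and $\tau$ on $M$, with induced action on $\Gamma$ equal to $\id$ on $\Gamma_L$ and $\tau$ on $\Gamma_M$ (compatibly on $\Gamma_L \cap \Gamma_M = \Gamma_C$).

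The main task is to verify that $\rho$ preserves valuation, i.e., $v(\rho(x)) = \rho(v(x))$. For $x = \sum \ell_i m_i$ with $m$ a separated basis over $L$, one has $v(x) = \min_i(v(\ell_i) + v(m_i))$ so $\rho(v(x)) = \min_i(v(\ell_i) + v(\tau(m_i)))$, and the ultrametric inequality already gives $v(\rho(x)) = v(\sum \sigma(\ell_i) \tau(m_i)) \ge \rho(v(x))$ automatically. Equality requires showing that no cancellation occurs among the minimal-valuation terms of $\sum \sigma(\ell_i) \tau(m_i)$. Applying $\tau$ to the separation identity for $m$ gives $v(\sum \tau(\ell_i) \tau(m_i)) = \rho(v(x))$, so what is needed is to transfer non-cancellation from $\tau(\ell_i)$ to $\sigma(\ell_i)$. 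This is exactly where the hypothesis that $\sigma$ and $\tau$ both fix $\Gamma_L$ and $k_L$ pointwise is used essentially: careful analysis of the $\RV$-data attached to $L$ forces $\sigma$ and $\tau$ to agree, modulo $1+\m$, on the leading terms of elements of $L$, so the cancellation pattern is preserved. This transfer is the delicate main obstacle.

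Once valuation preservation is established, the ``furthermore'' clause reads off immediately: the separated-basis formula $v(\sum \ell_i m_i) = \min_i(v(\ell_i) + v(m_i))$ shows that $\Gamma_N$ is generated by $\Gamma_L \cup \Gamma_M$, and a parallel residue-field computation (using linear disjointness of $k_L, k_M$ over $k_C$ to rule out residue cancellation) gives $k_N = k_L \cdot k_M$. Finally, since in RCVF the order is definable from the field structure, $\rho \colon N \to N'$ is a partial elementary map in the quantifier-eliminable language, and saturation of the monster $\R$ extends it to a full automorphism, which is the required map.
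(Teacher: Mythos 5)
There are two genuine gaps, and they sit exactly at the two places where the real work of this theorem happens.

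First, your valuation-preservation step is not actually proved, and the mechanism you propose for it is false. You reduce the problem to transferring non-cancellation from $\sum\tau(\ell_i)\tau(m_i)$ to $\sum\sigma(\ell_i)\tau(m_i)$, and claim that fixing $C\Gamma_Lk_L$ forces $\sigma$ and $\tau$ to agree modulo $1+\m$ on leading terms of elements of $L$. It does not: for $\ell\in L$ with $v(\ell)=\gamma\notin\Gamma_C$, the class $\ell(1+\m)\in\RV_\gamma$ is not determined by $v(\ell)$ and $k_L$ (e.g.\ an automorphism fixing $C\Gamma_Lk_L$ can send $t\mapsto 2t$ when $v(t)>\Gamma_C$, moving $t(1+\m)$ to $2t(1+\m)$ while $\tau=\id$). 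So the step you yourself flag as ``the delicate main obstacle'' is left unresolved, and the route you sketch for it cannot work; the correct argument (this is \cite[Proposition~12.11]{HHM2}, which the paper simply invokes after reducing to $\tau=\id$ by composing with $\tau^{-1}$) uses the separated basis together with the fact that $\sigma$ fixes $\Gamma_L$, $k_L$ and the coefficients from $M$ --- it never needs $\sigma$ and $\tau$ to agree on $\RV(L)$.

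Second, you dispose of order preservation by asserting that ``in RCVF the order is definable from the field structure,'' but that is only true in a real closed field: $N=C(L,M)$ is merely an ordered subfield of $\R$, positivity of an element of $N$ is witnessed by squares in $\R$, not in $N$, and a field isomorphism $N\to N'$ (even one preserving the valuation) need not preserve the induced order --- compare $\sqrt2\mapsto-\sqrt2$ on $\bQ(\sqrt2)$. This is precisely the new content of the theorem beyond the ACVF case, and it is where the paper's actual argument lies: given $a=\sum\ell_im_i$ with $a>0$, $\sigma(a)<0$, use $\Gamma_N=\Gamma_L\Gamma_M$ to write $a=\ell m a_1$ with $v(a_1)=0$; since $\sigma(\ell m)$ cannot change sign, $a_1$ and $\sigma(a_1)$ would have opposite signs, contradicting that $\sigma$ fixes $k_N$ (the field generated by $k_L$ and $k_M$) pointwise. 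Your tensor-product construction of the map and the linear-disjointness observation are fine, and handling general $\tau$ directly rather than by the paper's reduction is legitimate, but without correct proofs of valuation and order preservation the proposal does not establish the theorem.
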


\begin{proof}
We first assume that $\tau=\id$.  We extend $\sigma|_L$ to $C[L,M]$ by setting
$\sigma (\sum_{i=1}^n \ell_i m_i ) = \sum_{i=1}^n \sigma(\ell_i) m_i$, and then to the field of fractions $N$ in the only possible way. 
We need to show that the extended function, also called $\sigma$, preserves the valuation and the ordering. It will then
extend to an automorphism of $\R$ by quantifier elimination.
The fact that $\sigma$ is a valued field isomorphism is proved in \cite[Proposition~12.11]{HHM2}, and 
it is also shown there that $\Gamma_N$ is the group generated by $\Gamma_L$ and $\Gamma_M$, and that $k_N$
is the field generated by $k_L$ and $k_M$.  Here we need to show that $\sigma$ also preserves the ordering. 
We thank Tom Scanlon for pointing out the argument below, which is much simpler than our original proof.

Suppose for contradiction that there is $a\in C[L,M]$ with $a> 0$, $\sigma(a)<0$. First suppose that $a=\ell m$ for some 
$\ell \in L$ and $m\in M$. Dividing by $m$, we see that this contradicts the assumption that $\sigma$ preserves the ordering
on $L$.  Now let $a=\sum_{i=1}^n \ell_i m_i$. As $\Gamma_N$ is the group generated by $\Gamma_L$ and $\Gamma_M$,
$v(a) = v(\ell m)$ for some $\ell \in L$, $m\in M$. Write $a=\ell m a_1$, where $a_1\in C[L,M]$ and $v(a_1)=0$. As 
$\sigma(a) = \sigma(\ell m) \sigma(a_1)$ and $\sigma (\ell m)$ does not change sign (as already noted), it must be that
$a_1$ and $\sigma(a_1)$ have opposite sign. But then $\res(a_1)$ and $\res(\sigma(a_1))=\sigma(\res(a_1))$ have 
opposite sign, which contradicts the fact that $\sigma$ is the identity on $k_N$, as it is generated by $k_L$ and $k_M$.

Now suppose $\tau$ is any automorphism fixing $Ck_L\Gamma_L$ and such that $\tau:M\to M'$.  Apply $\tau^{-1}$ to $N'=C(L',M')$ to 
get a valued field $\widetilde N = C(\widetilde L, M)$ for $\widetilde L = \tau^{-1}(L')$. As $\Gamma_L = \Gamma_{L'} = \Gamma_{\widetilde L}$
and $k_L = k_{L'} =k_{\widetilde L}$, the hypotheses of the theorem apply to $\widetilde L$ and $M$, so we may apply the case of the
theorem that we have already proven to deduce that there is an isomorphism from $N$ to $\widetilde N$ which restricts to  $\tau^{-1}$
on $L$ and the identity on $M$. Composing this isomorphism with $\tau$ gives us the desired map $N\to N'$.
\end{proof}

\noindent We restate the theorem in the language of domination.

\begin{corollary}\label{dom-by-valuegp-resfield}
Let $C\subseteq M$ be substructures of $\R$ with $C$ maximal, $k_M$ a regular extension of $k_C$ and 
$\Gamma_M / \Gamma_C$  torsion free.Then $\tp(M/C)$ is dominated by the value group and residue field.
\end{corollary}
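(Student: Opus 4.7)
The plan is to deduce this corollary from Theorem~\ref{real-prop-12.11} via the automorphism formulation of domination in Fact~\ref{domination-automorphism-definition}. Given $B\supseteq C$ with $(\Gamma\cup k)(B)\thind_{(\Gamma\cup k)(C)}(\Gamma\cup k)(M)$ and an automorphism $\sigma$ of $\R$ fixing $C\cup\Gamma(M)\cup k(M)\cup\Gamma(B)\cup k(B)$, we want to produce an automorphism agreeing with $\sigma$ on $M$ and fixing $CB$. Using Fact~\ref{orthogonality}, the compound thorn-independence splits as $\Gamma(B)\thind_{\Gamma(C)}\Gamma(M)$ together with $k(B)\thind_{k(C)}k(M)$. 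Replace $B$ by the valued field substructure $L:=C\langle B\rangle$ it generates together with $C$; a standard extension calculation (built up from Fact~\ref{non-immediate-extensions}) shows that $\Gamma_L$ is the group generated by $\Gamma_C\cup\Gamma(B)$ and $k_L$ is the field generated by $k_C\cup k(B)$, so $\sigma$ fixes $C\Gamma_Lk_L$ pointwise.

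Next, verify the two valuation-theoretic hypotheses of Theorem~\ref{real-prop-12.11}. In the divisible ordered abelian group $\Gamma$, Example~\ref{ominimal-thorn} says $\Gamma(B)\thind_{\Gamma(C)}\Gamma(M)$ is the condition $\Gamma(B)\cap\Gamma(M)=\Gamma(C)$; combined with torsion-freeness of $\Gamma_M/\Gamma_C$, this upgrades to $\Gamma_L\cap\Gamma_M=\Gamma_C$. Similarly, $k(B)\thind_{k(C)}k(M)$ gives linear disjointness of (the real closures of) $k(B)$ and $k(M)$ over the real closure of $k(C)$ computed inside $k_M$; the regularity of $k_M/k_C$ forces that real closure to coincide with $k_C$, whence $k_L$ and $k_M$ are linearly disjoint over $k_C$.

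Finally, apply Theorem~\ref{real-prop-12.11} to $C$, $L$, $M$ with the two automorphisms $\id$ on $L$ and $\sigma|_M\colon M\to\sigma(M)$ on $M$, both of which fix $C\Gamma_Lk_L$ by the setup. The theorem then produces an automorphism of $\R$ restricting to the identity on $L\supseteq CB$ and to $\sigma$ on $M$, which is precisely the automorphism required by Definition~\ref{domination-definition}. I expect the main technical obstacle to lie in the previous paragraph: correctly translating the o-minimal thorn-independence of $B$ (whose only visible parts, for the purposes of Theorem~\ref{real-prop-12.11}, are its $\Gamma$- and $k$-parts) into the valuation-theoretic intersection and linear-disjointness conditions required there. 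This is exactly where the regularity of $k_M/k_C$ and the torsion-freeness of $\Gamma_M/\Gamma_C$ in the statement of the corollary enter in an essential way.
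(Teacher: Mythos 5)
Your proposal is correct and follows essentially the same route as the paper's own proof: pass to the substructure generated by the new parameters, use Example~\ref{ominimal-thorn} together with torsion-freeness of $\Gamma_M/\Gamma_C$ to get $\Gamma_L\cap\Gamma_M=\Gamma_C$, use \th-independence plus regularity of $k_M/k_C$ (Lang) to get linear disjointness of the residue fields, and then apply Theorem~\ref{real-prop-12.11} with the identity on $L$ via Fact~\ref{domination-automorphism-definition}. One small point: Definition~\ref{domination-definition} only requires handling automorphisms fixing $C\Gamma(B)k(B)$, so your added assumption that $\sigma$ also fixes $\Gamma(M)\cup k(M)$ is superfluous (and in fact never used, since the only thing needed is that $\sigma$ fix $C\Gamma_Lk_L$, which already follows from $\Gamma_L\subseteq\Gamma(B)$, $k_L\subseteq k(B)$), so it should simply be dropped.
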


\begin{proof}
Let $L\supseteq C$ be another substructure and assume that $k_{L} \Gamma_{L} \thind_{k_C \Gamma_{C}} k_{M} \Gamma_{M}$. We need 
to show that $\tp(M/C\Gamma_L k_L) \vdash \tp(M/L)$. That is, as in Fact~\ref{domination-automorphism-definition}, for any automorphism 
$\tau$ of $\R$ fixing $C\Gamma_L k_L$ there is an automorphism agreeing with $\tau$ on $M$ and fixing $L$. This is the conclusion of 
Theorem~\ref{real-prop-12.11}. The hypotheses of the theorem are satisfied because the \th-independence of the residue fields implies 
their algebraic independence over $\acl(k_C)$ (and thus over $k_C$), 
and hence their linear disjointness over $k_C$ by the regularity assumption (see \cite[Theorem VIII 4.12]{Lang}). Similarly, the \th-independence 
of the value groups  implies, as observed in Example~\ref{ominimal-thorn}, that $\Gamma_L\cap\Gamma_M\subseteq\dcl(\Gamma_C)$.  
But the definable closure of $\Gamma_C$ is just the divisible hull of $\Gamma_C$, and by our assumption on torsion 
$\dcl(\Gamma_C)\cap \Gamma_M=\Gamma_C$.  Thus $\Gamma_L\cap\Gamma_M=\Gamma_C$.
\end{proof}

Corollary~\ref{dom-by-valuegp-resfield} has the following consequence in the special case that $L$ is an immediate extension of the 
real closure $C^\rc$ of $C$. 
Despite the fact that $C$ is not necessarily dense in its real closure, the order type of $M$ with respect to $L$ is determined by its 
order type with respect to $C$. This applies in particular when $L$ is a maximal extension of $C^\rc$.

\begin{corollary}\label{up-to-real-closure}
Let $C\subseteq M$ be substructures of $\R$ with $C$ maximal, $k_M$ a regular extension of $k_C$ and $\Gamma_M / \Gamma_C$
torsion free. Let $L$ be an immediate extension of the real closure $C^\rc$ of $C$. Then
\[
	\tp(M/C) \vdash \tp(M/L) .
\]
\end{corollary}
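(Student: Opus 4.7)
The plan is to reduce the statement to Corollary~\ref{dom-by-valuegp-resfield}. Standard valuation theory for ordered fields gives that the real closure $C^{\rc}$ has residue field equal to $k_C^{\rc}$ (the real closure of $k_C$) and value group equal to the divisible hull of $\Gamma_C$. Since $L$ is an immediate extension of $C^{\rc}$, the same description applies to $k_L$ and $\Gamma_L$. Because $\R$ is a pure real closed valued field, both the residue field and the value group sorts are o-minimal (the former as a model of RCF, the latter as a divisible ordered abelian group), and in each of them $\dcl=\acl$. Therefore $\Gamma_L\subseteq\dcl(\Gamma_C)\subseteq\dcl(C)$ and $k_L\subseteq\dcl(k_C)\subseteq\dcl(C)$.

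From this I would immediately deduce
\[
    \tp(M/C)\vdash\tp(M/C\Gamma_L k_L),
\]
since adding parameters from $\dcl(C)$ cannot refine the type. To obtain the remaining implication $\tp(M/C\Gamma_L k_L)\vdash\tp(M/L)$ I would apply Corollary~\ref{dom-by-valuegp-resfield} to the pair $C\subseteq M$, whose hypotheses are given to us, with $L$ playing the role of the second substructure. The only thing to check is the \th-independence $k_L\Gamma_L\thind_{k_C\Gamma_C}k_M\Gamma_M$, and this is essentially free: we have just seen that $k_L\Gamma_L\subseteq\acl(k_C\Gamma_C)$, and sets algebraic over the base are \th-independent from everything over that base.

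Chaining the two implications yields $\tp(M/C)\vdash\tp(M/L)$. There is no serious obstacle in this argument; all of the genuine work has already been carried out in Corollary~\ref{dom-by-valuegp-resfield}. The one place that requires attention is verifying that immediacy of $L$ over $C^{\rc}$, together with the o-minimality of the residue field and value group sorts, really does push $k_L$ and $\Gamma_L$ into $\dcl(C)$. Once this is noted, the rest is bookkeeping.
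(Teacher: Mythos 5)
Your proposal is correct and follows essentially the same route as the paper: identify $k_L=k_{C^{\rc}}=k_C^{\rc}$ and $\Gamma_L=\Gamma_{C^{\rc}}$ as the divisible hull of $\Gamma_C$, note these lie in $\dcl(C)$ so that $\tp(M/C)\vdash\tp(M/Ck_L\Gamma_L)$, and then invoke Corollary~\ref{dom-by-valuegp-resfield}. The only difference is that you spell out the \th-independence hypothesis (trivially satisfied since $k_L\Gamma_L\subseteq\acl(k_C\Gamma_C)$), which the paper leaves implicit.
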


\begin{proof}
The fact that $\tp(M/C) \vdash \tp(M/Ck_{C^\rc}\Gamma_{C^\rc})$ is immediate. By the previous corollary,  
$\tp(M/Ck_L\Gamma_L) \vdash \tp(M/L)$. The conclusion follows as $k_{C^\rc}=k_L$ and $\Gamma_{C^\rc}=\Gamma_L$.
\end{proof} 

We now move on to proving our stronger domination result: over a maximal base, the ordered 
valued field is dominated by the $k$-internal sorts
over its value group. Those readers familiar with \cite[12.15]{HHM2} may safely skip to the final claim of the proof 
of Theorem \ref{real-prop-12.15} below.  However, as the proof in \cite{HHM2} incorrectly defines the parameters $e_i$, tacitly 
uses the uniqueness of non-forking extensions in a stable theory (not available to us), and, rather than using \cite[12.11]{HHM2} 
directly, uses the following equivalent formulation not explicitly stated in \cite{HHM2}, we hesitate to refer new readers to the proof in 
\cite{HHM2}, and have instead reproduced it here.  

\begin{corollary}\label{prop-12-11-alternate}
Let $C$, $L$, and $M$ be substructures of a large algebraically (respectively real) closed valued field.  Assume that $C$ is a 
maximal substructure of both $L$ and $M$ with $\Gamma_L\cap \Gamma_M =\Gamma_C$, and $k_L$, 
$k_M$  linearly disjoint over $k_C$.   If $L' \models \tp(L/C\Gamma_Mk_M)$ then $L' \models \tp(L/M)$.  
Furthermore, $\Gamma_{C(L,M)}$ is the group generated by $\Gamma_L$ and $\Gamma_M$, and $k_{C(L,M)}$
 is the field generated by $k_L$ and $k_M$.
\end{corollary}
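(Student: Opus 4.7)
The plan is to derive this corollary as a direct reformulation of Theorem~\ref{real-prop-12.11} (and, in the algebraically closed case, of \cite[Proposition~12.11]{HHM2}), by interchanging the roles of $L$ and $M$ and specializing one of the two automorphisms to the identity on $M$.

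First I would translate the hypothesis $L' \models \tp(L/C\Gamma_M k_M)$ into automorphism language: by the saturation of the monster $\R$, it amounts to the existence of an automorphism $\tau$ of $\R$ fixing $C\Gamma_M k_M$ pointwise with $\tau(L) = L'$. Setting $\sigma = \id_\R$ gives two automorphisms $\sigma,\tau$, both fixing $C\Gamma_M k_M$, with $\sigma(M)=M$ and $\tau(L)=L'$. The hypotheses of Theorem~\ref{real-prop-12.11} --- that $C$ is maximal, that $\Gamma_L\cap\Gamma_M=\Gamma_C$, and that $k_L$, $k_M$ are linearly disjoint over $k_C$ --- are symmetric in $L$ and $M$, so I would then apply the theorem with the roles of $L$ and $M$ interchanged. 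This yields an automorphism of $\R$ which restricts to $\sigma=\id$ on $M$ and to $\tau$ on $L$. Since that automorphism fixes $M$ pointwise and carries $L$ to $L'$, we conclude $L'\equiv_M L$, i.e.\ $L' \models \tp(L/M)$.

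The ``Furthermore'' clause is inherited verbatim from the corresponding ``Furthermore'' in Theorem~\ref{real-prop-12.11}, using $C(L,M)=C(M,L)$. There is no real obstacle here: all the work already sits inside the isomorphism theorem, and the present corollary is just a repackaging of that content into the form needed for the proof of Theorem~\ref{real-prop-12.15}. The only small point worth flagging is that in the ACVF case one must invoke \cite[Proposition~12.11]{HHM2} directly in place of Theorem~\ref{real-prop-12.11}; the symmetric-roles trick goes through verbatim.
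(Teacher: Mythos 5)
Your proof is correct and follows essentially the same route as the paper: both reduce the corollary to Theorem~\ref{real-prop-12.11} (resp.\ \cite[Proposition~12.11]{HHM2}) applied with the roles of $L$ and $M$ interchanged, the hypotheses being symmetric in the two fields. Your bookkeeping is in fact slightly more direct---one application with $\sigma=\id$ on $M$ and the witnessing automorphism on $L$---whereas the paper lets the witnessing automorphism move $M$ to $\widetilde M$ and then composes with a correcting automorphism obtained from the same reversed application of the theorem; the content is identical, including the ``Furthermore'' clause.
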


\begin{proof}
Let $\sigma: L\to L'$ witness $L' \models \tp(L/C\Gamma_Mk_M )$.  We wish to extend $\sigma$ to an isomorphism 
$\widetilde \sigma:C(L,M)\to C(L',M)$ which is the identity on $M$.  Suppose that $M = C(m)$ where $m$ is a possibly infinite 
tuple and that $\sigma(m)=\widetilde m$.  We need to find $\tau$ such that $\tau: \widetilde m \mapsto m$ while fixing $L$, 
$k_M$ and $\Gamma_M$.  Letting $\widetilde M = C(\widetilde m)$ we see that $M \models \tp (\widetilde M /C\Gamma_Mk_M )$.  
We apply \cite[12.11]{HHM2} (resp. Theorem \ref{real-prop-12.11}) with the roles of $L$ and $M$ reversed.  This gives us both 
the desired $\tau$ and proves the desired statements about the value group and residue field of $C(L,M)$.  Then we 
define $\widetilde \sigma = \tau \circ \sigma$.
\end{proof}

Below, $\R$ is a real closed valued field.
\begin{theorem}\label{real-prop-12.15}
Let $C$ be a maximal real closed valued field. Let $L$, $M$ be real closed valued fields with $C\subseteq L\cap M$ and  
$\Gamma_L\subseteq\Gamma_M$.  Assume that $\kInt_{\Gamma_L}^L$ is algebraically independent from 
$\kInt_{\Gamma_L}^M$ over $C\Gamma_L$. Suppose $\sigma$ is an automorphism of $\R$ fixing  
$C\Gamma_L \kInt_{\Gamma_L}^M $ and such that $\sigma(L)=L'$. 
Then there is an automorphism of $\R$ which restricts to $\sigma$ on $L$, fixes $M$ pointwise, and maps  $C(L,M)$ to $C(L',M)$.
\end{theorem}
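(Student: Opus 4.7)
The plan is to reduce to Theorem~\ref{real-prop-12.11} by enlarging the base $C$ to a maximal valued subfield $C^*$ with $\Gamma_{C^*} = \Gamma_L$, together with corresponding enlargements of $L$ and $M$.

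First, I invoke Proposition~\ref{real-prop-12.10}(i) on the given independence hypothesis to extract tuples $(a_i), (b_j)$ from $L$ and $(e_i)$ from $M$ with $v(e_i) = v(a_i)$, such that $(v(a_i))$ is an $F$-basis of $\Gamma_L/\Gamma_C$, $(\res(b_j))$ is an o-minimal basis of $k_L/k_C$, and the sequence $\res(a_i/e_i), \res(b_j)$ is algebraically independent over $k_M$. Set $C_1 := C\langle e_i\rangle \subseteq M$; by Fact~\ref{non-immediate-extensions}(ii), $\Gamma_{C_1} = \Gamma_L$ and $k_{C_1} = k_C$. Let $C^*$ be a maximal immediate extension of $C_1$ inside $\R$, so $C^*$ is maximal with $\Gamma_{C^*} = \Gamma_L$ and $k_{C^*} = k_C$. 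Form the compositums $\widetilde L := L \cdot C^*$ and $\widetilde M := M \cdot C^*$ inside $\R$.

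A direct calculation, using that every element of $C^*$ has value in $\Gamma_L \subseteq \Gamma_M$ and residue in $k_C \subseteq k_M$, shows $\widetilde M$ is an immediate extension of $M$, so $\Gamma_{\widetilde M} = \Gamma_M$ and $k_{\widetilde M} = k_M$. Similarly $\Gamma_{\widetilde L} = \Gamma_L$, while (applying Fact~\ref{non-immediate-extensions}(i) to $L\langle e_i\rangle/L$ and noting that $C^*/C_1$ is immediate) $k_{\widetilde L}$ is the real closure of $k_L(\res(a_i/e_i))$. The hypotheses of Theorem~\ref{real-prop-12.11} then hold for $C^* \subseteq \widetilde L \cap \widetilde M$: $\Gamma_{\widetilde L} \cap \Gamma_{\widetilde M} = \Gamma_L = \Gamma_{C^*}$, and $k_{\widetilde L}$ is linearly disjoint from $k_{\widetilde M} = k_M$ over $k_C$, by the algebraic independence in Proposition~\ref{real-prop-12.10}(i) combined with the regularity of $k_M/k_C$ (since $k_C$ is real closed and hence algebraically closed in $k_M$).

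To invoke Theorem~\ref{real-prop-12.11} I need two automorphisms of $\R$ fixing $C^*\Gamma_{\widetilde L} k_{\widetilde L}$. For the first, I extend $\sigma|_L$ to an automorphism $\widetilde \sigma$ of $\R$ with $\widetilde \sigma(\widetilde L) = L' \cdot C^*$ and $\widetilde\sigma|_{C^*}=\id$. The hypothesis that $\sigma$ fixes $\kInt^M_{\Gamma_L}$, in particular each $\RV$-class $e_i(1+\m) \in \RV_{v(a_i)}(M)$, gives $\sigma(e_i) \in e_i(1+\m)$; together with $e_i$ being transcendental over $L$ (Fact~\ref{non-immediate-extensions}(i) applied to $\res(a_i/e_i) \notin k_L$), the assignment $e_i \mapsto e_i$ extends $\sigma|_L$ consistently to a valued-field-and-order-preserving isomorphism $L(e_i) \to L'(e_i)$. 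Extending by the identity over $C^*/C_1$ and globalizing by quantifier elimination and saturation yields $\widetilde\sigma$. The second automorphism is any extension to $\R$ of $\id_{\widetilde M}$ that also fixes $k_{\widetilde L}$ pointwise, which exists because $k_M \cap k_{\widetilde L} = k_C$ by the linear disjointness. Theorem~\ref{real-prop-12.11} then produces the desired automorphism of $\R$, restricting to $\sigma$ on $L$ and to the identity on $M$, and mapping $C(L,M) \subseteq C^*(\widetilde L, \widetilde M)$ to $C(L', M)$. The hard part is the extension $\widetilde \sigma$: $\sigma$ fixes only the $\RV$-class $e_i(1+\m)$ and not $e_i$ itself, so one must verify that setting $\widetilde\sigma(e_i) = e_i$ is compatible with the action of $\sigma$ on $L$; this reduces to the identity $\sigma(\res(e_i/a_i)) = \res(e_i/\sigma(a_i))$, which is exactly what $\sigma(e_i)/e_i \in 1+\m$ provides.
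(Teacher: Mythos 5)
Your reduction founders at the point where you invoke Theorem~\ref{real-prop-12.11}: that theorem requires \emph{both} automorphisms to fix $C^*\Gamma_{\widetilde L}k_{\widetilde L}$ \emph{pointwise}, and your $\widetilde\sigma$ does not do this. Indeed $k_{\widetilde L}$ contains $k_L$ and the elements $\res(a_i/e_i)$; the hypothesis of the present theorem only makes $\sigma$ fix $C\Gamma_L\kInt_{\Gamma_L}^M$, which contains $k_M$ but neither $k_L$ nor $\res(a_i/e_i)$ (the latter are transcendental over $k_M$, so they cannot lie in $\kInt_{\Gamma_L}^M$), and in general $\sigma$ moves $k_L$ onto a different subfield $k_{L'}$ of $k$. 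This asymmetry --- you may only assume the automorphism fixes the residue-field data of the side that is to stay fixed ($M$), not of the side being moved ($L$) --- is exactly why the paper first proves the ``alternate'' form, Corollary~\ref{prop-12-11-alternate}, whose hypothesis is $L'\models\tp(L/C\Gamma_Mk_M)$, and then has to perturb the valuation (via a composition of places sending each $\res(a_i/e_i)$ to $0$) so that the corollary's hypotheses $\Gamma_{(L,v')}\cap\Gamma_{(M,v')}=\Gamma_{(C,v')}$ and linear disjointness of the new residue fields become true; afterwards the order-preservation has to be argued separately with separated bases. Swapping the roles of $\widetilde L$ and $\widetilde M$ in your application does not help either: then you would need $\widetilde\sigma$ to fix $\Gamma_{\widetilde M}=\Gamma_M$ pointwise, which is not given and requires the reduction carried out at the start of the paper's proof (stable embeddedness of $\Gamma$ plus Fact~\ref{orthogonality}), which you omit.

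Two further steps are also unsupported and carry real content. First, the compositum computations: ``every element of $C^*$ has value in $\Gamma_L$ and residue in $k_C$'' does not make $\widetilde M=M\cdot C^*$ an immediate extension of $M$, nor give $\Gamma_{\widetilde L}=\Gamma_L$; a maximal immediate extension of $C_1$ chosen arbitrarily inside $\R$ can realize pseudo-Cauchy sequences in ways that create new values or residues over $M$ (the paper, in Corollary~\ref{not-alg-closed}, is careful to build such extensions \emph{inside} $M$ for this reason), and the corresponding claim for $\widetilde L=L\cdot C^*$ is not covered by any result available at this point since $C_1$ is not maximal and $\Gamma_L\cap\Gamma_{C^*}\neq\Gamma_C$. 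Second, the step you call the hard part --- that $e_i\mapsto e_i$, and then the identity on all of $C^*$, is compatible with $\sigma|_L$ --- is not established by the identity $\sigma(\res(e_i/a_i))=\res(e_i/\sigma(a_i))$: agreement of one residue does not determine $\tp(e_i/L)$ versus $\tp(e_i/L')$ (one must control values and signs of all elements of $L\langle (e_i),C^*\rangle$), and in fact the assertion $\tp(C^*/L')=\sigma(\tp(C^*/L))$ is essentially an instance of the domination statement you are trying to prove. So the proposal, as it stands, has a genuine gap rather than being an alternative proof.
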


\begin{proof}

In outline, we begin, as in \cite[Proposition 12.15]{HHM2}, by perturbing the valuation to a finer one, $v'$, which satisfies the 
hypothesis that $\Gamma_{(L,v')}\cap \Gamma_{(M,v')} =\Gamma_{(C,v')}$. We can then apply Corollary~\ref{prop-12-11-alternate}
to extend $\sigma |_{L}$ to a valued field isomorphism from $C(L,M)=N$ to $C(L',M)=N'$ which extends the identity on $M$.
Finally we show that $\sigma$ also preserves the ordering on the fields. This last step requires some details
of the construction of the perturbation of the valuation, so it is worth repeating (and correcting) the proof
from \cite{HHM2} here. We use the language of places for this construction; information about places can be found
in \cite[Chapter VI]{ZS}.

The assumption that $L$ and $L'$ satisfy the same type over $C\Gamma_L \kInt_{\Gamma_L}^M $ allows us to assume that $\sigma$ fixes 
$C\Gamma_M \kInt_{\Gamma_L}^M $.  For suppose that there is some $\mu\in\Gamma_M$ with $\sigma(\mu)=\mu'$.  We wish to show there 
is $\tau: \mu' \mapsto \mu$ fixing $L' \kInt_{\Gamma_L}^M $ so that we may replace $\sigma$ with $\tau\circ \sigma$.  By stable embeddedness 
of $\Gamma$, we must show that $\mu$ and $\mu'$ realize the same type over $\Gamma(L' \kInt_{\Gamma_L}^M) $, so it suffices to show 
$\Gamma(L' \kInt_{\Gamma_L}^M) = \Gamma_L$.  Suppose that there is an $L'$-definable function $f$ from $\kInt_{\Gamma_L}^M$ to $\Gamma$.    
By the orthogonality of $k$ and $\Gamma$, for every $\lambda\in \Gamma_L$, $f$ takes only finitely many values on $RV_\lambda(M)$, and thus 
these values are algebraic over $L'$ and hence in $\Gamma_{L'}= \Gamma_L$.

Choose $a_1,\ldots, a_r$ from $L$ and $e_1,\ldots,e_r$ from $M$ such that, for each $1\le i\le r$, $v(a_i)=v(e_i)$
and $\{ v(a_i)\}$ forms a $\bQ$-basis for $\Gamma_L$ modulo $\Gamma_C$. Choose $b_1,\ldots,b_s$ from
$L$ such that $\{ \res(b_1),\ldots,\res(b_s)\}$ is a transcendence basis for $k_L$ over $k_C$. By Proposition~\ref{real-prop-12.10},
the elements
\[
	\res(a_1/e_1),\ldots,\res(a_r/e_r),\res(b_1),\ldots,\res(b_s)
\]
are algebraically independent over $k_M$. For $0\le j \le r$, let
\[
	R^{(j)} = \dcl(k_M ,\res(a_1/e_1),\ldots,\res(a_j/e_j),\res(b_1),\ldots,\res(b_s)).
\]
In particular, 
\begin{align*}
	R^{(0)} & = \dcl(k_M ,\res(b_1),\ldots,\res(b_s)) = \dcl(k_M ,k_L ) \quad \text{and} \\
	R^{(r)} & = \dcl(k_M ,\res(a_1/e_1),\ldots,\res(a_r/e_r), k_L ) .
\end{align*}
For each $0\le j\le r-1$, choose a place $p^{(j)}: R^{(j+1)} \to R^{(j)}$ fixing $R^{(j)}$ and such that 
$p^{(j)}(\res(a_{j+1}/e_{j+1})) =0$, which is possible by the algebraic independence of $\res(a_1 /e_1), \dots ,\res(a_r /e_r )$ over $k_M$. Also choose a place $p^*:k(N) \to R^{(r)}$ fixing $R^{(r)}$.
Write $p_v:\dcl(N)\to k(N)$ for the place corresponding to our given valuation $v$. Define
$p_{v'}: \dcl(N)\to R^{(0)}$ to be the composition
\[
	p_{v'} = p^{(0)}\circ \cdots \circ p^{(r-1)}\circ p^*\circ p_{v} .
\]
Let $v'$ be a valuation associated to the place $p_{v'}$. 
Notice that all the places $p^{(j)}$ and  $p^*$ are the identity on $k_M$, so we may identify 
$(M,v)$ and $(M,v')$, including identifying the value groups $\Gamma_M$ and $\Gamma_{(M,v')}$.
Similarly, the places are all the identity on $k_L$, so the value groups $\Gamma_L$ and $\Gamma_{(L,v')}$
are isomorphic, but we shall see that we cannot simultaneously identify $\Gamma_M$ with $\Gamma_{(M,v')}$ and
$\Gamma_L$ with $\Gamma_{(L,v')}$. Also notice that we cannot expect $v'$ to be convex with respect
to the ordering.

We now have two valuations $v$ and $v'$ on $N$. If $x\in M\subseteq N$, then $v(x)=v'(x)$, and if
$x,y\in L\subseteq N$ then $v(x) \le v(y)$ implies $v'(x)\le v'(y)$. Furthermore, the construction has
ensured that for any $x\in M$ with $v(x)>0$, 
\[
	0< v'(a_1/e_1) \ll \cdots \ll v'(a_r/e_r) \ll v'(x),
\]
where $\gamma \ll \delta$ means that $n\gamma < \delta$ for any $n\in\bN$ (and hence $\Gamma_{(L,v')}\not=\Gamma_L$). Let $\Delta$ be the 
subgroup of $\Gamma_{(N,v')}$ generated by $v'(a_1/e_1),\ldots,v'(a_r/e_r)$. Then $\Delta$
is a convex subgroup of $\Gamma_{(N,v')}$ and $\Gamma_{(N,v')} = \Delta\oplus\Gamma(N)$ (where the right-hand group is ordered
lexicographically). 

To see that $\Gamma_{(L,v')}\cap \Gamma_{(M,v')} = \Gamma_{(C,v')}$, 
let $m\in M$ and $\ell\in L$ be such that $v'(m)=v'(\ell)$.  Set $v'(\frac{a_i}{e_i})=\delta_i$ and $v'(e_i )=\epsilon_i$.  As $(v(a_i))$ generates
$\Gamma_L$ over $\Gamma_C$, and $\Gamma_L$ and $\Gamma_{(L,v')}$ are isomorphic, 
$$v'(\ell)=\sum_{i=1}^{r}p_i v'(a_i ) + \gamma = 
	\sum_{i=1}^{r} p_i \delta_i + \sum_{i=1}^{r} p_i \epsilon_i + \gamma ,
$$ 
where $p_i \in \mathbb{Q}$ and $\gamma \in \Gamma_C$. The set 
$$\{ \delta_1 , \dots ,\delta_r , \epsilon_1 ,\dots ,\epsilon_t \}
$$ 
is algebraically independent over $\Gamma_C$ since $\Gamma_{(N,v')}=\Delta \oplus \Gamma_N$.  Next, note that 
since $v'(e_i) = v(e_i )$, $\{ v'(e_i ) \}$ forms a $\mathbb{Q}$-basis of $\Gamma_L \subseteq \Gamma_M = \Gamma_{(M,v')}$ 
over $\Gamma_C$.  Let $\mu_1 , \dots ,\mu_t$ be such that $\{ \epsilon_i \} \cup \{ \mu_j \}$ forms a $\mathbb{Q}$-basis of 
$\Gamma_M$ over $\Gamma_C$.  Then 
$$ v'(m)  = \sum_{i=1}^{r}p'_i \epsilon_i + \sum_{i=1}^{t}q_i \mu_i + \gamma',$$ 
where $q_i \in \mathbb{Q}$ and $\gamma' \in \Gamma_C$.  It follows that each $p_i =p'_i =0$ and each $q_i = 0$, 
hence $v'(\ell)=v'(m) \in \Gamma_C$.

Next we must check that $k_{v'}(L)$ and $k_{v'}(M)$ are linearly disjoint. But note that the place 
$p^{(0)}\circ \cdots \circ p^{(r-1)}\circ p^*:k(N)\to \dcl(k(M),k(L))$ is the identity on $k_M$ and 
$k_L$ and thus also on their compositum.  Thus $k_L$ and $k_M$ being linearly disjoint over $k_C$ implies linear disjointness of 
$k_{(L,v')}$ and $k_{(M,v')}$ over $k_{(C,v')}$.

Hence we can apply Corollary~\ref{prop-12-11-alternate} to deduce that the isomorphism $\sigma |_{L}$ extends 
to a valued field isomorphism $\sigma'\colon (N,v') \to (N',v')$ which is the identity on $M$. As $v'$ is a refinement of
$v$, $\sigma'$ is also an isomorphism of $(N,v)$. 

It remains to show that $\sigma'$ preserves the ordering on $N$. Assume to the contrary, and let $n$ be minimal with
$\sum_{i=1}^{n}\ell_i m_i  >0$ and $\sigma'(\sum_{i=1}^{n} \ell_i m_i)<0$, where $m_i \in M$ and $\ell_i \in L$.  
Since $\Gamma_{(LM)}=\Gamma_M$, we may assume that $v(\sum_{i=1}^{n} \ell_i m_i)=0$.  Let $U = C^n\cdot m$ 
where $m=(m_1, \dots, m_n)$.  Modifying $m$ if needed we may assume that $m$ forms a separated basis for $U$ over $C$ with respect 
to $v'$, and hence, by Fact~\ref{factseparated}, is also separated over $L$ with respect 
to $v'$.   In fact, $m$ is also separated over $L$ with respect to $v$: for consider an element 
$x=\tilde{\ell}_1 m_1 + \dots + \tilde{\ell}_n m_n$ of $L^n\cdot m$.  Since $\Gamma_{(N,v')}=\Delta \oplus  \Gamma_N$ and by the 
construction of $v'$, $v'(x)=\delta + v(x)$  for some $\delta\in \Delta$. On the other hand, 
$$v'(x)= \textrm{min}\{v'(\tilde{\ell}_1 m_1), \dots, v'(\tilde{\ell}_n m_n)\} = 
	\textrm{min}\{\delta_1 + v(\tilde{\ell}_1 m_1), \dots, \delta_n+v(\tilde{\ell}_n m_n)\}.
$$  
Suppose the minimum is achieved at $j$. Then $v(x)=v(\tilde{\ell}_j m_j)$. To show that $v(\tilde{\ell}_j m_j) \le v(\tilde{\ell}_i m_i)$ for all $i$, suppose
for a contradiction that it is not.  Then $\delta_{j} + v(\tilde{\ell}_{j} m_{j}) \le \delta_i + v(\tilde{\ell}_{i} m_{i})$  implies 
$$0< v(\tilde{\ell}_j m_j)-v(\tilde{\ell}_i m_i ) \leq \delta_i - \delta_j ,
$$ 
a contradiction with the convexity of $\Delta$.

Since $m$ forms a separated basis of $L^n \cdot m$ over $L$, we have $$v(a)=\textrm{min}\{v(\ell_1 m_1), \dots, v(\ell_n m_n )\} .$$  
By the minimality of $n$, 
$v(\ell_1m_1)=\dots = v(\ell_n m_n)$, since if, say, $v(\ell_1 m_1 )>v(a)$, then subtracting $\ell_1 m_1$ from $a$ would not change the sign of $a$, 
nor would subtracting $\sigma'(\ell_1m_1)$ from $\sigma'(a)$.  Thus for each $i$, $v(\ell_i m_i) =0$, so 
$$v(m_i)= - v(\ell_i) = \lambda_i \in \Gamma_L .$$ 

This implies $\ell_1 \ball{op}{\lambda_1}{m_1} +\dots + \ell_n \ball{op}{\lambda_n}{m_n} > 0$, since an element thereof is of the form 
$$ \ell_1(m_1+d_1) + \dots + \ell_n (m_n+d_n) = a + (\ell_1 d_1 + \dots + \ell_n d_n)$$ for some $d_1,\ldots,d_n$ with $v(d_i) > \lambda_i$.
As $v(\ell_1 d_1 + \dots + \ell_n d_n)\ge \min\{v(\ell_id_i)\}>0$, we see that adding it to $a$ cannot change the sign of $a$. Thus 
\[
x_1 \ball{op}{\lambda_1}{m_1} +\dots + x_n \ball{op}{\lambda_n}{m_n} > 0
\]
is a formula in $\tp(\ell_1,\dots , l_n/\kInt_{\Gamma_L}^{M})$, 
which $\sigma'$ preserves.  So $\sigma'(a)>0$.

\end{proof}

As for Theorem~\ref{real-prop-12.11}, we can restate the theorem in terms of domination.

\begin{corollary}\label{dom-by-resfield-over-valuegp}
Let $C\subseteq L$ be elementary substructures of $\R$ with $C$ maximal. Then $\tp(L/C)$ is dominated over its value group 
by the $k$-internal sorts. \end{corollary}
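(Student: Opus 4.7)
The plan is to translate the corollary into the automorphism formulation provided by Fact~\ref{domination-automorphism-definition} and then invoke Theorem~\ref{real-prop-12.15}, with Proposition~\ref{real-prop-12.10} bridging the hypotheses. Let $\calS$ denote the collection of $k$-internal sorts. Combining Definition~\ref{domination-definition} with Fact~\ref{domination-automorphism-definition}, the task reduces to showing: for every parameter set $B\supseteq C\Gamma(L)$ with $\calS(B)\thind_{\calS(C\Gamma(L))}\calS(L)$, and every automorphism $\sigma$ of $\R$ fixing $C\Gamma(L)\calS(B)$, there is an automorphism agreeing with $\sigma$ on $L$ and fixing $C\Gamma(L)B$.

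Given such $B$ and $\sigma$, the next step is to set $M$ to be the real closed valued subfield of $\R$ generated by $B$, so that $C\subseteq M$, $\Gamma_L\subseteq\Gamma_M$, and any automorphism fixing $M$ fixes $B$. Since $\kInt_{\Gamma_L}^M$ lies in the $k$-internal sorts and is definable over $M$, it is contained in $\calS(B)$, so $\sigma$ already fixes $C\Gamma_L\kInt_{\Gamma_L}^M$. Moreover, since $\kInt_{\Gamma_L}^L\subseteq\calS(L)$ and $\kInt_{\Gamma_L}^M\subseteq\calS(B)$, the \th-independence hypothesis restricts to $\kInt_{\Gamma_L}^L\thind_{C\Gamma_L}\kInt_{\Gamma_L}^M$. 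By Proposition~\ref{real-prop-12.10}, this is equivalent to the algebraic independence of $\kInt_{\Gamma_L}^L$ and $\kInt_{\Gamma_L}^M$ over $C\Gamma_L$, which is exactly the hypothesis of Theorem~\ref{real-prop-12.15}. Applying that theorem produces an automorphism agreeing with $\sigma$ on $L$ and fixing $M$ pointwise, and therefore fixing $B$.

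The main obstacle is the bookkeeping between the abstract set $\calS(B)$ appearing in the definition of domination and the concrete set $\kInt_{\Gamma_L}^M$ appearing in Theorem~\ref{real-prop-12.15}: one must verify both that $\sigma$'s fixing the full $k$-internal reduct over $C\Gamma(L)$ suffices to fix the smaller set $\kInt_{\Gamma_L}^M$, and that \th-independence of the full $k$-internal reducts descends to the chosen subsets. Both follow from the inclusions noted above together with stable embeddedness of the $k$-internal sorts, but should be stated explicitly. A minor additional point is that Theorem~\ref{real-prop-12.15} is stated for real closed valued fields, so $M$ must genuinely be taken as a real closed valued subfield rather than just the ring generated by $B$; passing to the real closure of the field generated by $B$ costs nothing since it leaves $\Gamma$, $k$, and hence the $\kInt$ data unchanged.
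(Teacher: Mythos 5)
Your proposal is correct and follows essentially the same route as the paper: unfold domination over $\Gamma$ via Fact~\ref{domination-automorphism-definition}, replace the parameter set by (the real closure of) the field it generates, use Proposition~\ref{real-prop-12.10} to turn \th-independence of the $k$-internal data into the algebraic-independence hypothesis, and conclude by Theorem~\ref{real-prop-12.15}. The bookkeeping points you flag (that fixing the $k$-internal part of $B$ fixes $\kInt_{\Gamma_L}^M$, and that the $\kInt$ data is unchanged under real closure) are exactly the ones the paper also relies on, stated there with the same brevity.
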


\begin{proof}
Let $M\supseteq C\Gamma_L$ be another substructure of $\R$ and assume that 
\[ \kInt_{\Gamma_L}^M \thind_{\kInt_{\Gamma_L}^C} \kInt_{\Gamma_L}^L.
\] 
We need to show that  $\tp(L/ C\Gamma_L\kInt_{\Gamma_L}^M) \vdash \tp(L/C\Gamma_LM)$. That is, as in 
Fact~\ref{domination-automorphism-definition}, for any automorphism $\sigma$ of $\R$ fixing $C\Gamma_L\kInt_{\Gamma_L}^M$, 
there is an automorphism agreeing with $\sigma$ on $L$ and fixing $M$. This is the conclusion of Theorem~\ref{real-prop-12.15}
(for $M^\rc$ and hence also for $M$). The hypothesis holds by Proposition~\ref{real-prop-12.10}, as 
$\kInt_{\Gamma_C}^{M^\rc} = \kInt_{\Gamma_C}^M$.
\end{proof}

\begin{corollary}\label{not-alg-closed}
Assume $C$, $L$, $M$ are as in Theorem~\ref{real-prop-12.15}, except not necessarily real closed, but with $k_L$ a regular 
extension of $k_C$ and $\Gamma_L/\Gamma_C$ torsion free. Then 
\[
	\tp(L/C\Gamma_L\kInt_{\Gamma_L}^M) \vdash \tp(L/M).
\]
\end{corollary}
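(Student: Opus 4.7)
The plan is to reduce to Theorem~\ref{real-prop-12.15} by passing to real closures $C^{\rc}, L^{\rc}, M^{\rc}$ inside $\R$. Fix an automorphism $\sigma$ of $\R$ fixing $C\Gamma_L\kInt_{\Gamma_L}^M$ pointwise and sending $L$ to some $L'$; by Fact~\ref{domination-automorphism-definition} it suffices to produce an automorphism of $\R$ agreeing with $\sigma$ on $L$ and fixing $M$ pointwise.

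The key observation is that $\sigma$ in fact fixes the enlarged base $C^{\rc}\Gamma_{L^{\rc}}\kInt_{\Gamma_{L^{\rc}}}^{M^{\rc}}$ pointwise, because this enlarged base already lies in $\dcl(C\Gamma_L\kInt_{\Gamma_L}^M)$. Uniqueness of real closure in an ordered field puts $C^{\rc}\subseteq \dcl(C)$; torsion-freeness of $\Gamma$ puts the divisible hull $\Gamma_{L^{\rc}}\subseteq \dcl(\Gamma_L)$; and $\kInt_{\Gamma_{L^{\rc}}}^{M^{\rc}}=\kInt_{\Gamma_L}^M$, since $k_{M^{\rc}}\subseteq \acl(k_M)\subseteq\kInt_{\Gamma_L}^M$, every element of $\RV_\gamma(M^{\rc})$ for $\gamma\in \Gamma_L$ factors in the multiplicative group $\RV$ as a product of an element of $\RV_\gamma(M)$ and an element of $k_{M^{\rc}}^{\times}\cong \RV_0(M^{\rc})$, and every element of $\RV_\gamma(M^{\rc})$ for $\gamma\in \Gamma_{L^{\rc}}\setminus \Gamma_L$ is an $n$-th root of such an element (for some $n$ with $n\gamma\in \Gamma_L$), hence lies in $\acl(\kInt_{\Gamma_L}^M)=\kInt_{\Gamma_L}^M$.

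Next I would verify that $(C^{\rc},L^{\rc},M^{\rc})$ meets the hypotheses of Theorem~\ref{real-prop-12.15}. The field $C^{\rc}$ is real closed; it is also maximal, by the classical fact that in residue characteristic zero the algebraic closure of a maximal valued field is maximal, combined with the observation that $C^{\alg}/C^{\rc}$ is not immediate (adjoining $\sqrt{-1}$ enlarges the residue field), so any proper immediate extension of $C^{\rc}$ would produce one of $C^{\alg}$. For the algebraic-independence clause, pick $(a_i)\subset L$, $(b_j)\subset L$, $(e_i)\subset M$ witnessing condition (i) of Proposition~\ref{real-prop-12.10} for $(C,L,M)$: torsion-freeness of $\Gamma_L/\Gamma_C$ makes $(v(a_i))$ a $\bQ$-basis of $\Gamma_{L^{\rc}}/\Gamma_{C^{\rc}}$; regularity of $k_L$ over $k_C$ makes $k_L$ and $k_{C^{\rc}}$ linearly disjoint over $k_C$, so $(\res(b_j))$ remains a transcendence basis of $k_{L^{\rc}}/k_{C^{\rc}}$; and algebraic independence of $\res(a_1/e_1),\dots,\res(a_r/e_r),\res(b_1),\dots,\res(b_s)$ over $k_M$ persists over the algebraic extension $k_{M^{\rc}}$. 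Proposition~\ref{real-prop-12.10} then yields the required independence of $\kInt_{\Gamma_{L^{\rc}}}^{L^{\rc}}$ from $\kInt_{\Gamma_{L^{\rc}}}^{M^{\rc}}$ over $C^{\rc}\Gamma_{L^{\rc}}$.

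Theorem~\ref{real-prop-12.15} applied to $(C^{\rc},L^{\rc},M^{\rc})$ and $\sigma$ then produces an automorphism $\tau$ of $\R$ agreeing with $\sigma$ on $L^{\rc}$ and fixing $M^{\rc}$ pointwise; restricting yields $\tau|_L=\sigma|_L$ and $\tau|_M=\id_M$, which gives the desired entailment. The main obstacle is the identification $\kInt_{\Gamma_{L^{\rc}}}^{M^{\rc}}=\kInt_{\Gamma_L}^M$ in the second paragraph; without it, one would have to argue directly that $\sigma$ extends compatibly to the $\RV$-sorts over the divisible hull, a finite-orbit argument in which the order-preservation of $\sigma$ would be needed to pin down signs of $n$-th roots.
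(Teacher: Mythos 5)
Your reduction stands or falls on the claim that $C^{\rc}$ is maximal, and that claim is false. Maximality (equivalently, spherical completeness) passes to \emph{finite} extensions, but the real closure is in general an infinite algebraic extension and spherical completeness is lost: for $C=\bR((t))$, which is maximal, the real closure is the real Puiseux series field $\bigcup_n\bR((t^{1/n}))$, and the chain of balls $\ball{cl}{1-1/(n+1)}{a_n}$ with $a_n=\sum_{j\le n}t^{1-1/j}$ has empty intersection there, since any point of the intersection would need support with unbounded denominators. The ``classical fact'' you invoke (that in residue characteristic zero the algebraic closure of a maximal field is maximal) fails for the same reason: the Puiseux series field over $\bC$ is not spherically complete. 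Since Theorem~\ref{real-prop-12.15} requires its base to be maximal (and real closed), you cannot apply it over $C^{\rc}$, and this is precisely where the corollary needs a new idea rather than bookkeeping.

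The paper meets this problem by assuming without loss of generality that $M$ is maximal and real closed and constructing $\bar C$, a \emph{maximal immediate extension of $C^{\rc}$ inside $M$} (then checking $\bar C$ is real closed). The price is that $\sigma$ no longer fixes the base: one must show that $\bar L=\bar C(\ell)$ and $\bar L'=\bar C(\ell')$ still realize the same type over $\bar C\kInt_{\Gamma_{\bar L}}^M$, and this is where Theorem~\ref{real-prop-12.11} is applied to $C(\ell)$ and $\bar C(m)$ (with the $\res(m_i)$ generating $k_M$ over $k_C$), and where the regularity and torsion-freeness hypotheses do their real work, yielding linear disjointness of the residue fields and $\Gamma_L\cap\Gamma_{\bar C(m)}=\Gamma_C$. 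In your write-up those two hypotheses are used only to transfer the bases in Proposition~\ref{real-prop-12.10}, where they are in fact unnecessary (algebraic independence is insensitive to replacing the base by an algebraic extension); that they appear inessential in your argument is a symptom of the missing step. Your identification $\kInt_{\Gamma_{L^{\rc}}}^{M^{\rc}}=\kInt_{\Gamma_L}^M$ and the passage to $M^{\rc}$ are fine and consonant with the paper, but they are not the main obstacle; the base is.
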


\begin{proof}
We may assume $M$ is maximal and real-closed, so that $\bar{C}$, an immediate maximal extension of $C^{rc}$, may be constructed 
within $M$.  As an aside, note that $\bar{C}$ is in fact real closed.  For otherwise there would be an element $c$ algebraic over $\bar{C}$.  
But $c$ could not generate a ramified extension of $\bar{C}$, as it would necessarily give a new element in the divisible hull of 
$\Gamma_{\bar{C}}$, which already is divisible.  Likewise it cannot be a residual extension, as $k(C)$ is already real closed.  
Nor can it be immediate, as $\bar{C}$ is maximal.  

Without loss of generality, $L$ is $C(\ell)$ for a finite tuple $\ell$.  Let $\bar{L}$ be $\bar{C}(\ell)$ and let $\bar{L}'$ be $\bar C(\ell')$ (where $\ell'$ is 
$\sigma(\ell)$).  It suffices to show that $\bar{L}, \bar{L}', M$ and $\bar{C}$ satisfy the hypotheses of Theorem~\ref{real-prop-12.15}, for
then the conclusion of the theorem is that $\sigma$ extends by the identity on $M$, and hence that $L$ and $L'$ realize the same type 
over $M$ as desired.

Applying Theorem~\ref{real-prop-12.11}, we see that $\Gamma_{\bar{L}}$ is generated by $\Gamma_{L}$ and $\Gamma_{\bar{C}}$, and that
$k_{\bar{L}}$  is generated by $k_{L}$ and $k_{\bar{C}}$. Since $\Gamma_{L}$ and $\Gamma_{\bar{C}}$ are both contained in 
$\Gamma_M$, we have established $\Gamma(\bar{L})\subseteq \Gamma(M)$. Furthermore, there is  a subtuple  $(\ell_{n_i})$ of $\ell$ in $L$ such 
that $(v(\ell_{n_i}))$ generates both $\Gamma_L$ over $\Gamma_C$ and $\Gamma_{\bar{L}}$ over $\Gamma_{\bar{C}}$. Also there is
a subtuple $(\ell_{n_j})$ of $\ell$ such that $(\res(\ell_{n_j}))$ generates both $k_L$ over $k_C$ and  $k_{\bar{L}}$ over
$k_{\bar{C}}$. Taking any $e_i$ in $M$ with $v(e_i) = v(\ell_{n_i})$, we see that the hypothesis 
$\kInt_{\Gamma_L}^L \thind_{C\Gamma_L}\Gamma_L\kInt_{\Gamma_L^M}$ implies by Proposition~\ref{real-prop-12.10} that the sequence
$\{\res(\ell_{n_i}/e_i), \res(\ell_{n_j})\}_{i,j}$ is algebraically independent over $k_M$, and the existence of such elements implies that
$\kInt_{\Gamma_{\bar{L}}}^{\bar{L}} \thind_{\bar{C}\Gamma_{\bar{L}}}\kInt_{\Gamma_{\bar{L}}}^M$.

Finally, we need to verify that $\bar{L}$ and $\bar{L'}$ satisfy the same type over $\bar{C}\kInt_{\Gamma_{\bar{L}}}^M$.
We have that $\ell$ and $\ell'$ satisfy the same type over $C\Gamma_L\kInt_{\Gamma_L}^M$.  Choose a tuple $m$  so that the 
$\res(m_i)$ generate $k(M)$ over $k(C)$ (and each $m_i$ generates a residual extension). As in Corollary~\ref{dom-by-valuegp-resfield}, 
the hypothesis that 
$k_L$ is a regular extension of $k(C)$  means that the independence of $k_L$ and $k_M$ over $C$ implies linear disjointness.  Similarly, 
as $\Gamma_L/\Gamma_C$ is torsion free, and $\Gamma_{\bar{C}(m)}=\Gamma_{\bar C}$, we have that 
$\Gamma_L\cap\Gamma_{\bar{C}(m)}=\Gamma_C$.  Thus we may assume that $\sigma(\bar{C}(m))=\bar{C}(m)$. Now we may
apply Theorem \ref{real-prop-12.11} to $L=C(\ell)$ and $\bar{C}(m)$ and conclude that we have 
an automorphism mapping $L$ to $L'$ fixing $\bar C$ and $\kInt_{\Gamma_{\bar L}}^M \subseteq \bar{C}(l,m)$, as required.

\end{proof}


\section{Forking and \th-forking}
Theorem 2.5, especially as expressed in the form of Corollary 2.8, has the pleasing consequence that forking and \th-forking over a maximal base
are controlled by the value group and residue field. 

We begin by recalling the definitions and a few basic properties of forking and \th-forking.

\begin{definition}
A formula $\varphi(x,b)$ {\em divides} over $C$ if there is a sequence $(b_i)_{i<\omega}$ in $\tp(b/C)$ with $b=b_0$ and 
$\{\varphi(x,b_i)\}$ $m$-inconsistent. A formula $\varphi(x,b)$ {\em \th-divides} over $C$ if there is $d$ such that 
$\{\varphi(x,\tilde b): \tilde b \models \tp(b/Cd)\}$ is infinite and $m$-inconsistent. A formula {\em (\th-)forks} over $C$ if it implies a disjunction 
of formulas which (\th-)divide over $C$.  We say that $\tp(a/bC)$ {\em (\th-)forks} over $C$ if it contains a formula which (\th-)forks over $C$.  If 
$\tp(a/bC)$ does not (\th-)fork over $C$, we say $a$ is {\em (\th-)independent } from $b$ over $C$ and write this as $a \ind_C b$ 
(respectively $a \thind_C b$). Clearly, \th-forking implies forking.
\end{definition}

The difference between forking and dividing can sometimes be an issue, but in a large class of theories, including weakly o-minimal 
theories and algebraically closed valued fields, forking and dividing are the same \cite[the remarks preceding Proposition 2.6 together 
with Corollary 5.5]{CS}. 

Forking is not transitive in a non-simple theory, but the following partial left transitivity (sometimes called the pairs lemma) holds in all theories.  Often stated for dividing, it can be seen to hold for forking as well.  The corresponding property of \th-forking also holds in all theories.

\begin{fact}\label{pairs}
 In any theory 
  \begin{enumerate}[i)]
    \item \cite[Lemma 2.1.6]{O} if $a\thind_A c$ and $b\thind_{Aa} c$ then $ab\thind_A c$; and
    \item \cite[Lemma 1.5]{S} if $a\ind_A c$ and $b\ind_{Aa} c$ then $ab\ind_A c$. 
  \end{enumerate}
\end{fact}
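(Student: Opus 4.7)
The plan is to prove both parts by the standard indiscernible-sequence contradiction argument, originating in Shelah for (ii) and adapted by Onshuus for (i). Since any type that (\th-)forks over $A$ contains a formula that (\th-)divides over $A$ — every (\th-)forking formula is implied by a finite disjunction of (\th-)dividing ones, at least one of which must be consistent with, hence lie in, the complete type — I reduce both parts to the corresponding statements about (\th-)dividing.

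For part (ii), suppose for contradiction that some $\varphi(x,y,c)\in\tp(ab/Ac)$ divides over $A$, witnessed by an $A$-indiscernible sequence $(c_i)_{i<\omega}$ with $c_0=c$ and $\{\varphi(x,y,c_i)\}$ $k$-inconsistent. Combining $a\ind_A c$ with the standard lemma on indiscernibility extension for nonforking yields some $a'\equiv_{Ac} a$ over which $(c_i)$ is still indiscernible. Let $b'$ be the image of $b$ under an automorphism over $Ac$ sending $a$ to $a'$, so that $\varphi(a',b',c)$ holds. Since $(c_i)$ is $Aa'$-indiscernible and $\{\varphi(a',y,c_i)\}$ remains $k$-inconsistent, $\varphi(a',y,c)$ divides over $Aa'$, hence $\tp(b'/Aa'c)$ divides over $Aa'$. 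Pulling back by the automorphism, $\tp(b/Aac)$ contains a formula dividing over $Aa$, contradicting $b\ind_{Aa}c$.

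Part (i) follows the same outline with dividing replaced by \th-dividing. A \th-dividing witness for $\varphi(x,y,c)$ consists of a parameter $d$ together with an infinite family of realizations $(c_i)$ of $\tp(c/Ad)$ on which $\{\varphi(x,y,c_i)\}$ is $k$-inconsistent. Using the extension property of \th-independence, I would find $a'\equiv_{Ac} a$ with $a'\thind_A cd$; transporting the witnessing family along an automorphism over $Ac$ sending $a$ to $a'$ then produces a \th-dividing formula in $\tp(b/Aac)$, contradicting $b\thind_{Aa}c$.

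The main obstacle is the indiscernibility-extension step: producing $a'\equiv_{Ac} a$ over which the witnessing sequence (in (ii)) or the $k$-inconsistent family (in (i)) is preserved. For nonforking this is the standard lemma, proved by combining the extension axiom with Ramsey's theorem and a transitive automorphism argument. For \th-forking the analogous step requires the extension property of \th-independence in rosy theories, together with the fact that a suitable base choice $d$ can be made compatibly with both the \th-dividing witness of $\varphi$ and the \th-independence of $a$ from $c$ over $A$; this is the delicate part of Onshuus's argument.
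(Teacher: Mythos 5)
The paper offers no proof of this Fact (it is quoted from Onshuus and Shelah), so the only question is whether your argument is correct, and it has a genuine gap at the very first step. Your reduction ``every (\th-)forking formula is implied by a finite disjunction of (\th-)dividing ones, at least one of which must be consistent with, hence lie in, the complete type'' is false in a general theory: the dividing disjuncts $\psi_i(x,y,d_i)$ carry parameters $d_i$ that typically lie outside $Ac$, so they are not even candidates for membership in $\tp(ab/Ac)$, and consistency with a complete type is not membership. What you are asserting is exactly ``forking equals dividing for complete types,'' which fails in general; indeed the paper cites \cite{CS} precisely because this equality is a special feature of weakly o-minimal theories and ACVF, whereas the Fact is claimed for arbitrary theories. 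As written, your indiscernible-sequence argument in (ii) (which is correct) only proves the \emph{dividing} version: if $a\ind_A c$ and $b\ind_{Aa}c$ then $\tp(ab/Ac)$ does not divide over $A$.

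The standard repair uses the extension property of non(-\th)-forking, which holds in every theory by the very definition of (\th-)forking as implying a disjunction of (\th-)dividing formulas. Suppose $\varphi(x,y,c)\in\tp(ab/Ac)$ implies $\bigvee_i\psi_i(x,y,d_i)$ with each $\psi_i(x,y,d_i)$ dividing over $A$, and set $d=(d_i)_i$. Since $a\ind_A c$, choose $a'\equiv_{Ac}a$ with $\tp(a'/Acd)$ nonforking over $A$; let $\sigma$ fix $Ac$ with $\sigma(a)=a'$, $b'=\sigma(b)$, and then choose $b''\equiv_{Aa'c}b'$ with $\tp(b''/Aa'cd)$ nonforking over $Aa'$. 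The pair $a'b''$ still realizes $\varphi(x,y,c)$, hence some $\psi_i(x,y,d_i)$; but your dividing argument, run with $cd$ on the right-hand side, shows $\tp(a'b''/Acd)$ does not divide over $A$ --- contradiction. The same scheme is needed in (i), with two further corrections: the extension property of $\thind$ is not ``an extension property in rosy theories'' but holds in all theories (again by the definition of \th-forking), and the step transferring \th-dividing of $\varphi(a',y,c)$ from base $A$ to base $Aa'$ is not automatic --- one first reduces to strong dividing (so $\tp(c/Ad)$ is non-algebraic) and then uses $a'\thind_{Ad}c$ to guarantee $c\notin\acl(Aa'd)$, so that the instance family over $\tp(c/Aa'd)$ remains infinite; you acknowledge this as ``the delicate part'' but do not supply it.
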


\begin{lemma} \label{independence_in_k_and_Gamma}  In either ACVF or RCVF,
  \begin{enumerate}[i)]
    \item $k(Ca)\Gamma(Ca)\ind_C b$ if and only if $k(Ca)\Gamma(Ca)\ind_C k(Cb)\Gamma(Cb)$; and
    \item $k(Ca)\Gamma(Ca)\thind_C b$ if and only if $k(Ca)\Gamma(Ca)\thind_C k(Cb)\Gamma(Cb)$.
  \end{enumerate}
\end{lemma}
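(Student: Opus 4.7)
The forward direction of both (i) and (ii) is immediate from right-monotonicity of (\th-)forking: since $k(Cb)\Gamma(Cb)\subseteq\dcl(Cb)$, any (\th-)independence of $k(Ca)\Gamma(Ca)$ from $b$ over $C$ restricts to (\th-)independence from $k(Cb)\Gamma(Cb)$. That is the easy half.

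For the substantive $\Leftarrow$ direction my plan is to exploit stable embeddedness of the collection of sorts $S:=k\cup\Gamma$, together with elimination of imaginaries in an appropriate sorted language (\cite{HHM1} for ACVF, \cite{M} for RCVF). The precise consequence I would extract is: for any parameter set $D$ and any tuple $x$ of variables in sorts from $S$, every formula $\varphi(x,d)$ with $d$ a tuple from $D$ is equivalent, as a subset of $S^{|x|}$, to a formula $\psi(x,e)$ with $e\in k(D)\cup\Gamma(D)$. Stable embeddedness produces $e$ in $S$; EI then places $e$ in the definable closure. Orthogonality of $k$ and $\Gamma$ (Fact~\ref{orthogonality}) is what justifies treating $k\cup\Gamma$ as a jointly stably embedded sort in the first place.

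The plan is then to prove the contrapositive of $\Leftarrow$ in (i): writing $A=k(Ca)\Gamma(Ca)$ and $B=k(Cb)\Gamma(Cb)$, suppose $A\nind_C b$ via a forking formula $\varphi(x,d)\in\tp(A/Cb)$. Since $A\subseteq S$, I pass to $\chi(x,d):=\varphi(x,d)\wedge(x\in S^{|x|})$, which is still in $\tp(A/Cb)$ and still forks over $C$ (it implies $\varphi(x,d)$, so inherits any witnessing disjunction of dividing formulas). Applying the extracted consequence of stable embeddedness to $\chi$ yields an equivalent formula $\chi'(x,e):=\psi(x,e)\wedge(x\in S^{|x|})$ with $e\in\dcl(Cb)\cap S\subseteq B$. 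Both $\chi$ and $\chi'$ are supported on $S^{|x|}$ and agree there, so they define the same set of realizations in the monster; consequently $\chi'$ also forks over $C$. Since $A$ realizes $\chi'$ and its parameters lie in $CB$, the formula $\chi'$ is in $\tp(A/CB)$ and witnesses $A\nind_C B$.

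Part (ii) follows by the same argument, reading "\th-divides" for "divides" and "\th-forks" for "forks" throughout, since \th-dividing is likewise a property of the set of realizations together with the common witnessing parameter. The main delicacy is the precise statement and invocation of stable embeddedness together with EI to place the rewritten parameter $e$ inside $k(Cb)\cup\Gamma(Cb)$; once that is pinned down, equality of realization sets makes the transfer of both forking and \th-forking completely automatic, and no further use of the valuation-theoretic machinery from Section~1 is required.
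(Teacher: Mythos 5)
Your proof is correct in its main thrust and shares the paper's key idea: use stable embeddedness to replace the offending formula by one defining the same subset of the $k$/$\Gamma$ sorts with parameters in $k(Cb)\Gamma(Cb)$, and conclude that $\tp(A/CB)$ still (\th-)forks over $C$. Where you genuinely diverge is in how the transfer of forking is effected. The paper first splits the statement into its $k$-part and $\Gamma$-part using orthogonality, then for forking invokes the equality of forking and dividing (the citation to \cite{CS}) and conjugates the canonical parameter along a dividing sequence, and for \th-forking proves a separate claim allowing the \th-dividing disjuncts themselves to be rewritten with parameters in $k(Cb_i)$. Your observation that forking and \th-forking over $C$ (unlike dividing and \th-dividing) depend only on the realization set of the formula --- both being defined by implication of a disjunction of (\th-)dividing formulas, and implication only sees realization sets --- makes all of that unnecessary: only the formula in $\tp(A/Cb)$ needs its parameters moved into $k(Cb)\Gamma(Cb)$, while the witnessing disjunction is left untouched. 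This is a genuine streamlining, and it removes the dependence on forking $=$ dividing.

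One step needs repair, and it is the one you yourself flag as the delicacy. The elimination of imaginaries you cite (geometric sorts, \cite{HHM1}, \cite{M}) is not the right tool for placing the new parameter in $\dcl(Cb)\cap(k\cup\Gamma)$: geometric-sorts EI gives a code for your set lying in $\dcl(Cb)$, but that code lives in $\mathcal G$ and need not be (interdefinable with) a tuple from $k\cup\Gamma$, so it does not by itself produce $e\in k(Cb)\cup\Gamma(Cb)$. What is needed --- and what the paper actually uses --- is coding of definable sets \emph{inside the induced structures} on $k$ (ACF, resp.\ RCF) and on $\Gamma$ (a divisible ordered abelian group): the canonical parameter taken there is a tuple from the sort itself, is fixed by every automorphism over $Cb$, and hence lies in $k(Cb)$, resp.\ $\Gamma(Cb)$. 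Moreover, since you treat $S=k\cup\Gamma$ jointly rather than splitting as the paper does, you also need orthogonality (Fact~\ref{orthogonality}) in the stronger form that every definable subset of $k^n\times\Gamma^m$ is a finite union of products of definable subsets of $k^n$ and of $\Gamma^m$, so that a code inside $S$ can be assembled from codes of the factors (finite sets of such codes being coded again using EI of $k$ and of $\Gamma$). With these substitutions --- EI of the induced structures plus the product decomposition, in place of geometric-sorts EI --- your argument goes through.
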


\begin{proof}
In both i) and ii) the left to right implication is clear, as  $k(Ca)\Gamma(Ca)\ind_C b$ implies $k(Ca)\Gamma(Ca)\ind_C \acl(Cb)$ and 
$\acl(Cb)$ contains $k(Cb)\Gamma(Cb)$ (and similarly for \th-independence).
  
Since $k$ and $\Gamma$ are orthogonal, $k(Ca)\Gamma(Ca)\ind_C b$ if and only if $k(Ca) \ind_C b$ and 
$\Gamma(Ca)\ind_C b$.  Furthermore, one has $k(Ca)\Gamma(Ca)\ind_C k(Cb)\Gamma(Cb)$ if and only if 
$k(Ca) \ind_C k(Cb)$ and $\Gamma(Ca)\ind_C \Gamma(Cb)$.  Thus we need to show that $k(Ca) \ind_C k(Cb)$ implies 
$k(Ca) \ind_C b$ and also that $\Gamma(Ca)\ind_C \Gamma(Cb)$ implies $\Gamma(Ca)\ind_C b$.

Assume that $k(Ca) \nind_C b$.  Then, as forking is dividing, there is a formula $\varphi(x,a) \in \tp(k(Ca)/bC)$, and there is  
$m<\omega$, and $b=b_0, b_1, b_2, \dots$ satisfying $\tp(b/C)$ such that $\{\varphi(x,b_i)|i<\omega\}$ is $m$-inconsistent.  By stable 
embeddedness of $k$, the subset of $k^n$ defined by $\varphi(x, b)$ is also defined by $\psi(x, e)$ for $e$ a tuple from the residue 
field.  As $k$ eliminates imaginaries, we may assume $e$ is the canonical parameter for the set defined by $\varphi(x,b)$.  Thus, as this 
set is definable over $Cb$, $e$ is contained in $k(Cb)$.  Suppose that $\sigma_i$ is an automorphism of $\mathcal{R}$ that maps $b$ to $b_i$ 
and fixes $C$.  Then, letting $e_i=\sigma_i(e)$, one sees that $\{\psi(x,e_i) | i<\omega\}$ is $m$-inconsistent and witnesses $k(Ca) \nind_C k(Cb)$.

The proof that $\Gamma(Ca)\ind_C \Gamma(Cb)$ implies $\Gamma(Ca)\ind_C b$ is similar.

Now suppose $k(Ca) \nthind_C b$.  We first consider the case in which $\tp(k(Ca)/Cb)$ contains a formula that \th-divides over $C$.

\begin{unnumberedclaim}
  Suppose $\varphi$ is a formula over $C$ such that $\varphi(x, b)$ defines a subset of $k^n$ and \th-divides over $C$. Then there 
  is $\psi(x, e)$, defining the same subset of $k^n$, which also \th-divides over $C$, and  where $e\in k(Cb)$.
\end{unnumberedclaim}

\begin{proof of claim}
 We have $d$ such that $\{\varphi(x,\tilde b)\colon \tilde b \models \tp(b/Cd)\}$ is $m$-inconsistent and infinite.  As above, we can 
 replace $\varphi(x, b)$ with $\psi(x, e)$ defining the same set and with $e$ in $\dcl(Cb)$.  
 Thus $\psi(x, e)$ \th-divides over $C$.
\end{proof of claim}

Now suppose $\tp(k(Ca)/Cb)$ contains $\varphi(x, b)$ which implies $\bigvee \varphi_i(x, b_i)$, where each 
$\varphi_i(x, b_i)$ \th-divides over $C$.  By stable embeddedness of $k$, we may replace $\varphi(x, b)$ with $\psi(x,e)$ 
defining the same set and with $e\in k(Cb)$.  By the claim, we may replace each $\varphi_i(x, b_i)$ with a $\psi_i(x, e_i)$ defining 
the same set and with $e_i\in k(Cb_i)$.  Since $\varphi$ and $\psi$ define the same set, as do $\varphi_i$ and $\psi_i$, we have $\psi(x,e)$ 
implies $\bigvee \psi_i(x, e_i)$.   Thus $k(Ca) \nthind_C k(Cb)$. 

The proof that $\Gamma(Ca)\thind_C \Gamma(Cb)$ implies $\Gamma(Ca)\thind_C b$ is similar.
\end{proof}

\begin{theorem}\label{forking-over-maximal}
Let $C$ be a substructure of a model $\R$ of either RCVF or ACVF, and let $a,b\in \R$. Assume that $C$ is a model (or just that $k(Ca)$ is a regular extension of $k_C$ and $\Gamma(Ca)/\Gamma_C$ is torsion free) and maximal.  Then 
\begin{enumerate}[i)]
  \item $a \thind_C b$ if and only if $k(Ca)\Gamma(Ca)\thind_C k(Cb)\Gamma(Cb)$;
  \item $a \ind_C b$ if and only if $k(Ca)\Gamma(Ca)\ind_C k(Cb)\Gamma(Cb)$.
\end{enumerate}
\end{theorem}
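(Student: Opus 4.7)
I will prove part i) in detail; part ii) is strictly analogous, using Fact~\ref{pairs}~ii) and Lemma~\ref{independence_in_k_and_Gamma}~i) in place of their \th-forking counterparts. The forward direction of i) is a direct monotonicity argument: $a \thind_C b$ implies $k(Ca)\Gamma(Ca) \thind_C b$, and Lemma~\ref{independence_in_k_and_Gamma}~ii) then upgrades this to $k(Ca)\Gamma(Ca) \thind_C k(Cb)\Gamma(Cb)$.

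For the converse, write $e := k(Ca)\Gamma(Ca)$ and assume $e \thind_C k(Cb)\Gamma(Cb)$; by Lemma~\ref{independence_in_k_and_Gamma}~ii) this is equivalent to $e \thind_C b$. The plan is to invoke the pairs lemma Fact~\ref{pairs}~i): once $a \thind_{Ce} b$ is established, one obtains $ae \thind_C b$ and hence $a \thind_C b$ by monotonicity. The theorem's hypotheses on $C$ (maximal, with $k_{Ca}/k_C$ regular and $\Gamma(Ca)/\Gamma_C$ torsion-free) let me apply Corollary~\ref{dom-by-valuegp-resfield}: $\tp(a/C)$ is dominated by $k \cup \Gamma$, and together with $e \thind_C k(Cb)\Gamma(Cb)$ this yields the entailment
\[
	\tp\!\left(a \,/\, Ce\,k(Cb)\Gamma(Cb)\right) \;\vdash\; \tp(a / Ceb).
\]
If some $\varphi(x,b) \in \tp(a/Ceb)$ were to \th-fork over $Ce$, then by compactness applied to the entailment there would be $\psi(x,f) \in \tp(a/Cef)$ with $f \in k(Cb)\Gamma(Cb)$ and $\psi \vdash \varphi$, so that $\psi(x,f)$ itself \th-forks over $Ce$, and a fortiori over $C$.

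The main obstacle is to convert \th-forking of $\psi(x,f)$---whose variable $x$ ranges over the (non-stably-embedded) field sort---into \th-forking in $\tp(e/Cf)$, which would contradict $e \thind_C f$ (a consequence of $e \thind_C b$ by monotonicity). The plan, mirroring the proof of Lemma~\ref{independence_in_k_and_Gamma}, is to take an indiscernible sequence $\{b_i : b_i \models \tp(b/Cd)\}$ witnessing \th-dividing, project it to $\{f_i\}$ in $k \cup \Gamma$, and split into cases. If infinitely many $f_i$ are distinct, then $\psi(x,f)$ \th-divides over $C$ through a configuration purely in $k \cup \Gamma$-parameters; using stable embeddedness of $k \cup \Gamma$ and the elimination of imaginaries in $k$ and $\Gamma$, together with the fact that $e$ exhausts the $k\cup\Gamma$-content of $a$, one can pass from this to a \th-dividing formula in $\tp(e_0/Cf)$ for a suitable finite sub-tuple $e_0$ of $e$, giving $e \nthind_C f$. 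If instead only finitely many distinct $f_i$ occur, then some single $\psi(x,f_i)$ must imply infinitely many pairwise $m$-inconsistent formulas $\varphi(x,b_j)$, contradicting the consistency of $\psi(x,f_i) \in \tp(a/Cef_i)$.
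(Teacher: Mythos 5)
Your skeleton (reduce to $a\thind_{Ce}b$ for $e=k(Ca)\Gamma(Ca)$, then finish with Fact~\ref{pairs} and monotonicity) is the same as the paper's, and the forward direction, the use of Lemma~\ref{independence_in_k_and_Gamma}, and the final pairs-lemma step are fine. The genuine gap is your ``conversion'' step. After compactness you are left with a formula $\psi(x,\bar f)\in\tp(a/Ce\bar f)$ that \th-forks over $Ce$, where $x$ is a \emph{field} variable and $\bar f$ is a finite tuple from $k(Cb)\Gamma(Cb)$, and you claim to turn this into \th-forking of $\tp(e_0/Cf)$ using stable embeddedness of $k\cup\Gamma$, elimination of imaginaries in $k$ and $\Gamma$, and ``$e$ exhausts the $k\cup\Gamma$-content of $a$''. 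None of these tools applies here: stable embeddedness and EI let you re-parametrize definable subsets \emph{of} $k^n\times\Gamma^m$ (that is exactly how Lemma~\ref{independence_in_k_and_Gamma} works), but $\psi(x,\bar f)$ defines a subset of the field sort, so there is no canonical parameter in $k\Gamma$ to pass to, and ``exhausting the $k\Gamma$-content'' is not a statement with usable content. What you actually need at this point is the theorem itself with $b$ replaced by a tuple from $k\cup\Gamma$, namely that $\bar f\thind_C e$ implies $a\thind_{Ce}\bar f$; this is not easier than the original problem, and, tellingly, the maximality of $C$ --- which is indispensable (over a non-maximal $C$, take $a=b$ generating an immediate extension: then $e$ is trivial, the right-hand independence holds vacuously, but $a\nthind_C a$) --- never enters your final step. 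Your only use of maximality is through Corollary~\ref{dom-by-valuegp-resfield}, and that entailment speaks only about the single parameter $b$; it cannot by itself control consistency along a family of conjugates, which is what (\th-)dividing is about. There are also local problems: \th-forking means implying a disjunction of \th-dividing formulas whose parameters need not lie in $k\cup\Gamma$, so ``projecting the witnessing sequence to $\{f_i\}$'' is not available (and no analogue of forking$=$dividing for \th-forking is cited); in your finite case the entailments $\psi(x,f_i)\vdash\varphi(x,b_j)$ are obtained by automorphisms over $Cd$ which need not fix the hidden $Ce$-parameters of $\psi$ and $\varphi$; and the assertion that part ii) is ``strictly analogous'' inherits all of these difficulties.

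For comparison, the paper sidesteps the formula-level argument entirely by proving the stronger statement $a\ind_{Ce}b$ directly: the hypothesis plus regularity/torsion-freeness gives linear disjointness of $k(Ca)$, $k(Cb)$ over $k_C$ and $\Gamma(Ca)\cap\Gamma(Cb)=\Gamma_C$, so Corollary~\ref{prop-12-11-alternate} (with the roles of the two sides exchanged) shows that every $b_i\models\tp(b/Ce)$ is conjugate to $b$ by an automorphism fixing $Ca$ pointwise; hence $a$ itself realizes every conjugate $\tp(x/Ceb_i)$ along any such sequence, so $\tp(a/Ceb)$ does not divide, and by forking$=$dividing in these theories does not fork, over $Ce$. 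A single application of this gives both i) and ii) via Fact~\ref{pairs}, since nonforking implies \th-nonforking. If you wish to keep your domination-based reduction to parameters in $k\cup\Gamma$, you would still have to prove that remaining case by an argument of this isomorphism-theorem type, so the reduction buys nothing.
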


\begin{proof}

In both i) and ii), the left to right direction is clear.  Now assume that $k(Ca)\Gamma(Ca)\thind_C k(Cb)\Gamma(Cb)$,  that is, 
$k(Ca)\thind_C k(Cb)$ and 
$\Gamma(Ca)\thind_C \Gamma(Cb)$.  The former (together with $C$ being a model or our regularity assumption) implies that $k(Ca)$ and 
$k(Cb)$ are linearly disjoint over $C$ while the latter (together with $C$ being a model or our torsion free assumption) implies that 
$\Gamma(Ca)\cap\Gamma(Cb)=\Gamma(C)$.  Thus, we may apply Corollary \ref{prop-12-11-alternate} in the following fashion to see 
that $a \ind_{Ck(Ca)\Gamma(Ca)} b$: choose an indiscernible sequence $b=b_0, b_1, \dots$ in $\tp(b/Ck(Ca)\Gamma(Ca))$.  Let 
$p(x, Ck(Ca)\Gamma(Ca)b)=\tp(a/Ck(Ca)\Gamma(Ca)b)$.  
Since, by the corollary, a partial isomorphism mapping $b$ to $b_i$ and fixing $Ck(Ca)\Gamma(Ca)$ may be extended to one fixing 
$a$ as well, we have that 
$a\models \bigcap_i p(x,Ck(Ca)\Gamma(Ca)b_i)$ and thus $\tp(a/Ck(Ca)\Gamma(Ca)b)$ does not divide (and hence does not fork) 
over $Ck(Ca)\Gamma(Ca)$.

By Lemma \ref{independence_in_k_and_Gamma}, $k(Ca)\Gamma(Ca)\thind_C b$ if and only if 
$k(Ca)\Gamma(Ca)\thind_C k(Cb)\Gamma(Cb)$.  Likewise, $k(Ca)\Gamma(Ca)\ind_C b$ if and 
only if $k(Ca)\Gamma(Ca)\ind_C k(Cb)\Gamma(Cb)$.    Using Fact \ref{pairs}, $a \ind_{Ck(Ca)\Gamma(Ca)} b$ and 
$k(Ca)\Gamma(Ca) \ind_C b$ imply $ak(Ca)\Gamma(Ca)\ind_C b$.  
Since $k(Ca)\Gamma(Ca)$ is in $\acl(Ca)$, this is equivalent to $a \ind_C b$.  

Likewise (recalling independence implies \th-independence), 
$a \ind_{Ck(Ca)\Gamma(Ca)} b$ and $k(Ca)\Gamma(Ca) \thind_C b$ together imply $a\thind_C b$.
 \end{proof}

As $k\times\Gamma$ is rosy (and hence \th-forking is symmetric), we have the following corollary.
\begin{corollary}
 $a\thind_C b$ if and only if $b\thind_C a$ when $C$ is a maximal model.  
\end{corollary}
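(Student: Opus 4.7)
The plan is to use Theorem~\ref{forking-over-maximal} to transfer the question of symmetry of \th-independence in $\R$ (over the maximal model $C$) to a question about symmetry of \th-independence in the reduct consisting of the residue field and value group sorts, where symmetry is already known.

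First I would invoke Theorem~\ref{forking-over-maximal}(i) to rewrite each side of the desired equivalence: $a\thind_C b$ is equivalent to $k(Ca)\Gamma(Ca)\thind_C k(Cb)\Gamma(Cb)$, while $b\thind_C a$ is equivalent to $k(Cb)\Gamma(Cb)\thind_C k(Ca)\Gamma(Ca)$. Thus it suffices to prove symmetry of \th-independence for tuples drawn from $k\cup\Gamma$ over the parameter set $C$.

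For that symmetry, I would appeal to the fact that the induced structure on $k\cup\Gamma$ is (by Fact~\ref{orthogonality} together with o-minimality of each of $k$ and $\Gamma$) a disjoint union of two orthogonal o-minimal sorts, and therefore rosy. In any rosy theory \th-forking is symmetric, which gives precisely the needed equivalence between $k(Ca)\Gamma(Ca)\thind_C k(Cb)\Gamma(Cb)$ and $k(Cb)\Gamma(Cb)\thind_C k(Ca)\Gamma(Ca)$. Chaining the three equivalences yields the corollary.

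The only real subtlety is making sure that the notion of \th-forking used inside the $k\cup\Gamma$ reduct agrees with \th-forking computed in the ambient structure $\R^{\eq}$. This is handled by stable embeddedness of $k$ and of $\Gamma$, which lets every formula over $\R$ defining a subset of a Cartesian power of $k\cup\Gamma$ be rewritten with parameters from $k\cup\Gamma$; the same move was already used implicitly in the proof of Lemma~\ref{independence_in_k_and_Gamma} and in Theorem~\ref{forking-over-maximal}, so no new work is needed. I do not expect any genuine obstacle: once Theorem~\ref{forking-over-maximal} is in hand, the corollary is essentially a one-line reduction to symmetry of \th-forking in a rosy theory.
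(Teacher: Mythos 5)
Your proposal is correct and is essentially the paper's own argument: the paper's (one-line) proof likewise applies Theorem~\ref{forking-over-maximal}(i) to both sides and concludes from rosiness of $k\times\Gamma$, hence symmetry of \th-forking there. The extra care you take about stable embeddedness identifying \th-forking in the reduct with \th-forking in $\R^{\eq}$ is a reasonable point but is already subsumed in Lemma~\ref{independence_in_k_and_Gamma}, exactly as you note.
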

\noindent Of course, \th-forking is symmetric in general if and only if the theory is rosy, so one cannot expect this to hold without $C$ being a maximal model.


\section{Geometric sorts in a $T$-convex theory}

We now turn to the problem of extending the above domination results to the geometric sorts. This requires developing
the work on prime resolutions from \cite{HHM2} to apply to the case of ordered valued fields. Much of the structure 
below comes from \cite[Chapter 11]{HHM2}, however there are enough differences in the ordered case to make it 
worthwhile to reproduce it here.

\subsection{Prime resolutions}
Our goal in this subsection is to prove the existence of prime resolutions of definable sets in the geometric language
in a power-bounded $T$-convex theory. Recall that the geometric sorts in a valued field $K$ are a collection
of definable $V_K$-submodules of $K^n$ and their torsors. In a pure algebraically closed valued field or
real closed valued field,  the theory eliminates imaginaries with respect to these sorts \cite{HHM1,M}. This 
collection of sorts does not suffice to eliminate imaginaries in a richer language which includes a function symbol
for the restricted exponential function \cite{HHM3}. Nevertheless, it is still of interest to understand domination in
the context of this sorted language.

The geometric sorts in a valued field can be identified with coset spaces of matrix groups over the field, as 
follows (full details of this identification can be found in \cite[Chapter 7]{HHM2}). Let $B_n(K)$ ($B_n (V_K )$ and 
$B_n (k_K)$ respectively) be the (multiplicative) group
of upper triangular invertible $n \times n$ matrices over $K$ ($V_K$ and $k_K$ respectively).  Further, let $B_{n,m}(k_K)$ 
be the set of elements of $B_n(k_K)$ whose $m$th column has a $1$ as the $m$th entry and $0$ elsewhere. 
Let $B_{n,m}(V_K)$ be the set of matrices in $B_n(V_K)$ which reduce modulo $\m$ componentwise
to elements of  $B_{n,m}(k_K)$. The sort $S_n$ of $V_K$-submodules of $K^n$ can be identified with the 
set of codes for left cosets of $B_n(V_K)$ in $B_n(K)$ and the sort $T_n$ of torsors of elements of $S_n$
can be identified with a set of codes for elements of $\bigcup_{m=1}^n B_n(K) / B_{n,m}(V_K)$.

We set $\mathcal{G}=\bigcup_{n=1}^{\infty }\big(\mathcal{S}_n \cup \mathcal{T}_n  \big)$.

\begin{definition}
Let $A$ be a substructure of $\Eq{\R}$. A {\em pre-resolution} of $A$ is a  substructure $D$ of $\Eq{\R}$ such that 
$A\subseteq\acl(D\cap \R)$. A {\em resolution} of $A$ is a substructure $D$ of $\R$ which is algebraically closed in 
$\R$ and is such that $A\subseteq \dcl{D}$. The resolution $D$ is prime over $A$ if it embeds over $A$ into any other resolution.
\end{definition}

The proof of existence of prime resolutions goes via properties of opaque equivalence relations. We summarize 
this briefly from \cite{HHM2}.

\begin{definition}\cite[Definition 11.5]{HHM2}
\begin{enumerate}[i)]
	\item Let $E$ be a $C$-definable equivalence relation on a $C$-definable set $D$. We say that $E$ is {\em opaque}
if, for each $C$-definable $F \subseteq D$, $F=F'\cup F''$, where $F'$ is a union of $E$-equivalence classes and $F''$ is contained in 
a finite union of $E$-equivalence classes.
	\item We say that $\tp(a/C)$ is {\em opaquely layered} if there are sequences $a_1,\ldots,a_N$ of (imaginary) 
elements and $E_1,\ldots,E_N$ of opaque equivalence relations such that each $a_i$ is an $E_i$-equivalence class, each $E_i$ is defined 
over $C\cup\{a_j:j<i\}$, and $\dcl(Ca)=\dcl(C, a_1,\ldots,a_N)$.
	\item The $C$-definable equivalence relation $E$ on $D$  is {\em opaquely layered} over $C$ if for every $d\in D$,
	$\tp(d/E)$ is opaquely layered over $C$.
	\item Let $G$ be a $C$-definable group and $F$ a $C$-definable subgroup. Then $G/F$ is opaquely layered (opaque)
	over $C$ if  the equivalence relation $xF=yF$ is opaquely layered (opaque) over $C$.
\end{enumerate}
\end{definition}

\begin{proposition}\label{pre-resolution-exists}\cite[Lemma 11.7]{HHM2}
Let $C\subseteq \R$ and suppose that $a$ is a finite tuple of imaginaries such that $\tp(a/C)$ is opaquely layered, witnessed
by $a_0,\ldots,a_N$ and $E_0\ldots,E_N$. Then $\dcl (Ca)$ has a pre-resolution $D$ which is atomic over 
$C\langle a\rangle$. If $D'$ is another pre-resolution that contains $C$ and an element from each $E_i$-class, 
then $D$ embeds elementarily into $D'$. 
\end{proposition}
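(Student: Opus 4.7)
The plan is to prove the proposition by induction on $N$, constructing the pre-resolution layer by layer. At each stage, given an atomic pre-resolution of $\dcl(C a_0 \cdots a_i)$, one adjoins a carefully chosen real representative of the next class $a_{i+1}$ and uses opaqueness to guarantee the resulting extension remains atomic. Concretely, let $D_{-1}$ be the algebraic closure of $C$ in $\R$, and inductively pick a real $d_{i+1}$ whose $E_{i+1}$-class is $a_{i+1}$ and such that $\tp(d_{i+1}/D_i a_{i+1})$ is isolated; set $D_{i+1} := \acl(D_i d_{i+1}) \cap \R$. Then $D := D_N$ is an algebraically closed substructure of $\R$ containing $a$ in its definable closure, hence a pre-resolution (in fact a resolution) of $\dcl(Ca)$.

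The existence of an isolated representative $d \in e$ of an opaque $E$-class $e$ over a base $B$ is exactly where opaqueness is put to work. By opaqueness, every $B$-definable subset $F$ of the domain of $E$ splits as $F = F' \cup F''$ with $F'$ a union of $E$-classes and $F''$ contained in finitely many classes. Thus the complete type of any $d \in e$ over $B \cup \{e\}$ is determined by (i) which ``union of classes'' pieces $F'$ contain $e$ --- already forced by naming $e$ --- together with (ii) which ``finite'' pieces contain $d$, each of which picks out only finitely many classes, one of which must be $e$. A compactness argument then produces a single formula of the second type which, together with $x \in e$, isolates the complete type of $d$; realizing this type in the monster gives the desired $d$. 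Applying this iteratively with $B = D_i$ (the later $a_j$ with $j>i+1$ are defined from later $d_j$ and so add no new constraints at stage $i+1$), each $\tp(d_{i+1}/D_i a_0\cdots a_N)$ is isolated and $D$ is atomic over $C\langle a\rangle$.

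For the embedding clause, suppose $D'$ is another pre-resolution containing $C$ together with real representatives $d'_0,\ldots,d'_N$ of the respective classes. Since $\tp(d_i/D_{i-1} a_0\cdots a_N)$ is isolated and $d'_i$ lies in the same class $a_i$, the assignment $d_i\mapsto d'_i$ extends step by step to an elementary embedding $D \hookrightarrow D'$, at each step possibly first replacing $d'_i$ by another representative of $a_i$ via an automorphism of $D'$ fixing previously matched elements so that the isolating formula is satisfied. The main obstacle I anticipate is the compactness step in the key lemma: one must verify that the finite-piece decompositions coming from distinct formulas are simultaneously realizable by a common representative of $e$. This is where the layered hypothesis is essential: the opaquely layered structure organizes the relevant equivalence relations into a finite chain, each opaque over the previous data, so that the finite-piece constraints at stage $i+1$ can be gathered coherently over a single isolated formula rather than an incompatible collection.
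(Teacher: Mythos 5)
The paper itself does not prove this statement; it is imported verbatim from \cite[Lemma 11.7]{HHM2}. Measured against that proof (and on its own terms) your argument has a genuine gap at its central lemma, the existence of a representative $d\in e$ with $\tp(d/Be)$ isolated. First, opacity as defined only decomposes sets definable over the base over which $E$ is defined, whereas $\tp(d/Be)$ is governed by $Be$-definable sets; the standard repair is to replace $\varphi(x,e)$ by the $B$-definable set $\{x:\varphi(x,x/E)\}$, which agrees with it on $e$, and your argument does not do this. Second, and more seriously, recording ``which finite pieces $F''$ contain $d$'' does not determine $\tp(d/Be)$: opacity says nothing about how $F''$ meets the single class $e$, so the traces $F''\cap e$ can be arbitrary subsets of $e$ and can form an infinite strictly descending family; no ``compactness argument'' can convert this into a single isolating formula, since compactness never produces isolated types. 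What opacity actually buys is a dichotomy: for a $B$-definable $F$, the classes on which $F$ is neither empty nor full form a finite $\Aut(\R/B)$-invariant set, hence lie in $\acl^{\eq}(B)$; consequently, if the class $e$ is \emph{not} algebraic over $B$, then \emph{every} element of $e$ realizes one and the same type over $Be$, isolated by $x\in e$, while if $e\in\acl^{\eq}(B)$ opacity gives no control at all (already for the cosets of $\V$ one can arrange a $B$-definable class none of whose elements has isolated type over $Be$). Your proposal neither isolates this dichotomy nor says anything in the algebraic case, which is exactly where the work, and the use of the layering, lies.

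The atomicity bookkeeping is also off. Atomicity over $C\langle a\rangle$ requires, via transitivity of isolation, that $\tp(d_{i+1}/C a_0\cdots a_N d_0\cdots d_i)$ be isolated with \emph{all} the classes in the parameter set; your parenthetical claim that the later $a_j$ ``add no new constraints'' is unjustified, since they are given in advance as part of $a$, they are genuine extra imaginary parameters, over them $a_{i+1}$ may even become algebraic, and $E_{i+1}$ is only assumed opaque over $C\cup\{a_j:j\le i\}$, so some argument is needed that its decomposition property controls sets defined over the larger base. Finally, in the embedding clause you adjust $d_i'$ by ``an automorphism of $D'$ fixing previously matched elements''; $D'$ is merely a pre-resolution, a small substructure with no homogeneity, so such automorphisms need not exist. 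With the correct key lemma this manoeuvre is unnecessary: in the non-algebraic case every element of the class realizes the isolated type, so the map $d_i\mapsto d_i'$ is elementary as it stands, and this is the mechanism behind the cited proof in \cite{HHM2}.
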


\noindent
In the situation of the present paper, where the elements of $\Eq{\R}$ are equivalence classes in the home sort $\R$, the 
statement can be strengthened to say that $D$ is a resolution, not just a pre-resolution.

We now apply these concepts to build prime resolutions for the geometric sorts in $T$-convex theories. 

\begin{proposition}\label{opaque-groups}
Let $K$ be a model of a power-bounded $T$-convex theory. 
\begin{enumerate}[i)]
	\item The additive groups $K/V_K$ and $V/\m_K$ are opaque, as is the multiplicative group $K^\times/(1+\m_K)$.
	\item The multiplicative group $K^\times / V^\times_K$ is 
	opaquely layered.
	\item The groups $B_n(K)/B_n(V_K)$ and $B_n(K)/B_{n,m}(V_K)$ are opaquely layered.
\end{enumerate}
\end{proposition}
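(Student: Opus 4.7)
The plan is to mirror the corresponding ACVF argument in \cite[Chapter 11]{HHM2}, with the key adaptation being to establish opacity (part (i)) in the $T$-convex setting using Fact~\ref{definablesets} in place of the ACVF description of definable subsets of $K$. Once (i) is available, parts (ii) and (iii) will follow the ACVF structure with only minor modifications. The main obstacle is thus part (i): whereas in ACVF definable subsets of $K$ are essentially boolean combinations of balls and opacity is nearly automatic, in the $T$-convex case intervals also appear, and one must leverage the convexity of the relevant cosets to handle them.

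For (i), I will note that the cosets in each quotient are convex subsets of $K$: cosets of $V_K$ are closed balls of valuation radius $0$, cosets of $\m_K$ inside $V_K$ are open balls of valuation radius $0$, and cosets of $1+\m_K$ in $K^\times$ are the open balls $a(1+\m_K) = \ball{op}{v(a)}{a}$. By Fact~\ref{definablesets}, any definable subset $F$ of $K$, $V_K$, or $K^\times$ is a finite boolean combination of points, intervals, and open and closed balls. Each such basic piece, being convex, either meets only finitely many cosets, or consists of a union of ``middle'' cosets lying strictly between its endpoints together with partial pieces of at most two boundary cosets. Taking finite boolean combinations preserves the resulting decomposition $F = F' \cup F''$ required for opacity.

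For (ii), I will use the tower $1 + \m_K \subset V^\times_K \subset K^\times$. Given $d \in K^\times$ with class $dV^\times_K$, the first layer is $a_1 = d(1+\m_K) \in \RV$, a class of the ``same $(1+\m_K)$-coset'' equivalence on $K^\times$, which is opaque by (i) and defined over $\emptyset$. This determines $d$ up to membership in the open ball $a_1$, and a final equality layer (trivially opaque) pins $d$ down, witnessing that $\tp(d/C, dV^\times_K)$ is opaquely layered.

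For (iii), I will induct on $n$, exploiting the upper-triangular structure of $B_n(K)$. Each matrix entry contributes a layer: diagonal entries contribute $K^\times/V^\times_K$-layers (opaquely layered by (ii)) and off-diagonal entries contribute $K/V_K$-layers (opaque by (i)); for the torsor quotient $B_n(K)/B_{n,m}(V_K)$, the normalization of the $m$-th column contributes an additional $V_K/\m_K$-layer. These layers can be arranged in a chain so that each equivalence is defined over $C$ and the earlier layers, yielding the opaquely layered structure.
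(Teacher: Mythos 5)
Your part (i) is essentially the paper's argument (classes of each quotient are convex, definable subsets of $K$ have finitely many convex components by Fact~\ref{definablesets}), and for part (iii) the paper itself simply cites \cite[Lemma 11.13]{HHM2}, so your sketch there is at a comparable level of detail (though note the individual entries of a coset representative are not invariants of the coset, so one cannot literally take ``one layer per matrix entry''; the layers have to be chosen as in HHM2). The genuine gap is in part (ii). What must be shown is that for each $d\in K^\times$ the type over $C$ of the \emph{imaginary} $dV_K^\times$ is opaquely layered: by Definition~\ref{opaque-groups}'s surrounding definitions one needs imaginaries $a_1,\dots,a_N$, each a class of an opaque equivalence relation defined over $C$ and the earlier $a_j$, with $\dcl(C, dV_K^\times)=\dcl(C,a_1,\dots,a_N)$. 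Your proposed layers $a_1=d(1+\m_K)$ and then $d$ itself are not definable over $C$ from the coset $dV_K^\times$ (the valuation coset determines neither the $\RV$-class nor $d$), so the required equality of definable closures fails; what your layering actually establishes is only the trivial fact that $\tp(d/C)$ is opaquely layered, which is not the statement.

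You have also skipped the point that makes (ii) nontrivial in the ordered setting: the relation $xV_K^\times=yV_K^\times$ is \emph{not} opaque, because the definable set $K^{>0}$ meets every coset but contains none, so it admits no decomposition $F'\cup F''$ as in the definition; this is exactly why (ii) asserts only ``opaquely layered'' while in ACVF the quotient is opaque outright. The paper's proof handles this with a two-step layering: $E_1$ given by $x/y>0$ (two classes, each $\emptyset$-definable, so trivially opaque) and $E_2$ given by $xE_1y$ and $v(x)=v(y)$, whose classes are convex and hence opaque by the same argument as in (i); the witnessing imaginaries are $d/E_1$ and $d/E_2$, and these do satisfy $\dcl(C,dV_K^\times)=\dcl(C,d/E_1,d/E_2)$ since the sign classes are fixed by all automorphisms and each half-coset is interdefinable with the coset over them. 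Your proof of (ii) needs to be replaced by an argument of this kind.
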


\begin{proof} Part i) is an immediate consequence of the property arising from Fact~\ref{definablesets}, that the definable subsets of $K$ 
have finitely many convex components 
and that the equivalence classes of each of the given quotients are convex subsets of $K$.   

To prove part ii), let $E$ be the equivalence relation $$xEy \Longleftrightarrow v(x)=v(y)$$ on $K^{\times}$.  Then the fact that the type of the 
$E$-equivalence class of $b\in K^{\times}$ is opaquely layered is witnessed by the equivalence relations
\begin{align*}
	x E_1 y & \Longleftrightarrow x/y > 0; \\
	x E_2 y & \Longleftrightarrow x E_1 y \text{ and } v(x) = v(y)
\end{align*}
and by the sequence of imaginary elements $b/E_1 , b/E_2$.

iii) This is exactly the same as the proof of the corresponding statement in \cite[Lemma 11.13]{HHM2}.
\end{proof}

\begin{theorem}\label{resolution-exists}
Let $K$ be a power-bounded $T$-convex structure, and let $C\subseteq K$.
Let $e$ be a finite set of imaginaries
from $\mathcal{G}$. Then $\dcl (Ce)$ admits a resolution $D$ which is minimal, prime and atomic over $Ce$. Up to isomorphism 
over $Ce$, $D$ is the unique prime resolution of $\dcl (Ce)$. Furthermore, $k_D =k(Ce)$, $\Gamma_D=\Gamma(Ce)$
and $\kInt_{\Gamma(D)}^D = \kInt_{\Gamma(Ce)}^{\acl(Ce)}$.
\end{theorem}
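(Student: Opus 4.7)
The strategy is to transfer the argument of \cite[Chapter 11]{HHM2} to the power-bounded $T$-convex setting, with Proposition~\ref{opaque-groups} playing the role of the corresponding opaqueness results for ACVF.

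First I would reduce to resolving one imaginary at a time. Given the finite tuple $e$, enumerate its components and build $D$ by iterated resolution: resolve the first component over $C$, then the second over the resolution of the first, and so on. Each component lies in some sort $\mathcal{S}_n$ or $\mathcal{T}_n$, hence is (coded by) a coset of $B_n(V_K)$ or $B_{n,m}(V_K)$ in $B_n(K)$. By Proposition~\ref{opaque-groups}~iii), the quotient is opaquely layered, so Proposition~\ref{pre-resolution-exists} produces a pre-resolution of $\dcl(Ce)$ that is atomic over $C\langle e\rangle$. Because the layering in Proposition~\ref{opaque-groups} is built from equivalence relations on the home sort $K$, representatives of the classes live in $K$, so the pre-resolution is in fact a resolution. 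Minimality, primality, and uniqueness up to isomorphism over $Ce$ are then inherited directly from Proposition~\ref{pre-resolution-exists}: any other resolution contains representatives of each opaque equivalence class, hence $D$ embeds into it over $Ce$, and two prime resolutions embed elementarily into each other.

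The main obstacle is the concrete structural statement $k_D = k(Ce)$, $\Gamma_D = \Gamma(Ce)$. To handle this, I would unwind the layering inside each $B_n(K)/B_n(V_K)$ (respectively $B_{n,m}(V_K)$), which factors, after repeatedly using the isomorphism theorems of \cite[Chapter~7]{HHM2}, through iterated quotients of the basic shapes $K/V_K$, $V_K/\m_K$, and $K^\times/V_K^\times$ treated in parts i) and ii) of Proposition~\ref{opaque-groups}. For the first two, the opaque classes are convex subsets of $K$, so resolving a class produces an element of $\R$ whose residue is already in $k(Ce)$ (respectively whose valuation is already in $\Gamma(Ce)$), by Fact~\ref{definablesets} applied to $C$-definable subsets of the class. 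For the $K^\times/V_K^\times$-layer, the explicit layering by $E_1$ and $E_2$ in Proposition~\ref{opaque-groups}~ii) shows that a representative of an equivalence class is an element of $K$ whose sign and valuation are forced, so no new $k$- or $\Gamma$-elements are introduced. The orthogonality of $k$ and $\Gamma$ (Fact~\ref{orthogonality}) together with Facts~\ref{residue-field-embedded} and \ref{value-group-embedded} prevents any mixing between these two sorts during the iteration, so the final resolution satisfies $k_D = k(Ce)$ and $\Gamma_D = \Gamma(Ce)$.

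Finally, the equality $\kInt_{\Gamma(D)}^D = \kInt_{\Gamma(Ce)}^{\acl(Ce)}$ follows formally from the previous paragraph: by definition $\kInt_{\Gamma(D)}^D = \acl(k(D)\{\RV_\gamma(D)\}_{\gamma\in\Gamma(D)})$, and since $D$ is a resolution with $k_D = k(Ce)$ and $\Gamma_D = \Gamma(Ce)$, the generating data coincide with those defining $\kInt_{\Gamma(Ce)}^{\acl(Ce)}$, using stable embeddedness of the residue field and of each $\RV_\gamma$. I expect the bookkeeping in the middle paragraph, verifying that no stray residue field or value group elements creep in through the iterated resolution of a matrix coset, to be the technical heart of the argument; the other clauses are essentially consequences of Proposition~\ref{pre-resolution-exists} once opaque layering is in hand.
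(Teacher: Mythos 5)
Your overall skeleton (reduce to cosets of $B_n(V_K)$ and $B_{n,m}(V_K)$, invoke Proposition~\ref{opaque-groups}~iii) and Proposition~\ref{pre-resolution-exists}, note that the classes live in the home sort so the pre-resolution is a resolution) matches the paper. But the clause you yourself identify as the technical heart, $k_D=k(Ce)$ and $\Gamma_D=\Gamma(Ce)$, is where your argument has a genuine gap. Your plan is to track representatives layer by layer and argue that each chosen representative has residue (resp.\ value) already in $k(Ce)$ (resp.\ $\Gamma(Ce)$) ``by convexity of the classes and Fact~\ref{definablesets}.'' That is not true of an arbitrary representative: a class of $K/V_K$ is a closed ball, and a point of that ball can have any residue whatsoever, so convexity alone forces nothing; what matters is that the \emph{particular} point produced by the construction realizes an isolated type. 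Moreover, the assertion to be proved concerns the whole definably closed structure $D$ generated by these representatives, not just the representatives themselves, so a representative-by-representative bookkeeping would still need a Wilkie-inequality style control of the generated extension; and the appeal to orthogonality of $k$ and $\Gamma$ addresses mixing between the two sorts, which is not the issue --- the issue is whether \emph{any} new residue or value appears. The paper's argument is different and much shorter: $D$ is atomic over $Ce$, so for $d\in D$ the types $\tp(\res(d)/Ce)$ and $\tp(v(d)/Ce)$ are isolated; since $k$ is an o-minimal (real closed) field and $\Gamma$ a divisible ordered abelian group, isolated types there are algebraic, hence $\res(d),v(d)\in\dcl(Ce)$. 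Your final clause about $\kInt$ then goes through essentially as you say, but it rests on this equality, so it inherits the gap.

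Two smaller points. Primality is not ``inherited directly'' from Proposition~\ref{pre-resolution-exists}: that proposition only gives an embedding into pre-resolutions containing an element of each $E_i$-class, and your claim that an arbitrary resolution contains such elements silently uses definable Skolem functions in the main sort --- this is exactly how the paper gets primality, and it should be said explicitly. Likewise minimality is not automatic: the paper deduces it by combining primality with the fact that $D$ has finite transcendence degree over $C$ (so an embedding of $D$ into a sub-resolution $D'\subseteq D$ forces $D'=D$); your sketch omits this step.
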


\begin{proof}
The proof is essentially the same as that of \cite[Theorem 11.14]{HHM2}. We include it here in order to clarify a few points
in the argument, and make a few simplifications.  In particular, parts of the argument are made easier due to having definable Skolem 
functions in the main sort.  Furthermore, the use of \cite[Theorem 10.15]{HHM2} is unnecessary.

As in the proof of \cite[Theorem 11.14]{HHM2},
we may assume that $e$ has the same definable closure over $C$ as some pair $(a,b)$ with 
$a\in B_n(K)/B_n(V_K)$ and $b\in B_m(K)/B_{mm}(V_K)$. By Proposition~\ref{opaque-groups} iii), these groups are opaquely layered,
and hence so is $\tp(ab/C)$ (using also \cite[Lemma 11.6]{HHM2}). By Proposition~\ref{pre-resolution-exists}, $C\langle e\rangle$
has an atomic resolution. This resolution is prime because $\R$ has definable Skolem functions.

In either of the above cases, the resolution $D$ is also minimal. For suppose there is another resolution $D'$ with $D'\subseteq D$.
By primality, $D$ embeds into $D'$ over $C\langle e\rangle$. As $D$ has finite transcendence degree over $C$ (this follows 
from the proof of Proposition~\ref{pre-resolution-exists}), $D'=D$. The fact that $D$ is unique up to isomorphism over $C\langle e\rangle$
is immediate from primality and minimality.

Finally, we show that $k_D =k(Ce)$ and $\Gamma_D=\Gamma(Ce)$. 
The inclusions $\Gamma (Ce) \subseteq \Gamma_D$ and $k(Ce)\subseteq k_D$ are obvious. To see the containment in the other
direction, consider $d\in D$. As $D$ is atomic, $\tp(d/Ce)$ is isolated, and hence also $\tp(\res(d)/Ce)$ and $\tp(v(d)/Ce)$ are
both isolated. As $k$ is real closed and $\Gamma$ is divisible abelian, the only isolated types are algebraic, so both
$\res(d)$ and $v(d)$ are in $\dcl(Ce)$. Thus $\Gamma_D \subseteq\Gamma(Ce)$ and $k_D \subseteq k(Ce)$.  All that remains is to 
note that 
$$\kInt_{\Gamma_D}^D = \acl(k_D RV(D))=\acl(k(Ce)RV(Ce))=\kInt_{\Gamma(Ce)}^{\dcl(Ce)}.
$$
\end{proof}

\subsection{Domination in the sorted structure}

We can now state our domination results in the sorted language $\G$. 

\begin{theorem}\label{sorted-domination}
Let $C$ be a substructure of $\R$ which is maximal as a valued field. Let $A$ be a definably closed subset of $\R \cup \G$, with
$A=\dcl(Ce)$ for a countable tuple of imaginaries $e\in\G$. Then
\begin{enumerate}[i)]
	\item $\tp(A/C)$ is dominated by the value group and residue field;
	\item $\tp(A/C)$ is dominated by the $k$-internal sorts over the value group.
\end{enumerate} 
\end{theorem}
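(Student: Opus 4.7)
The plan is to reduce this statement to the already-proven domination results of Section~2 by passing from $A$ to a prime resolution in the main sort. I would first reduce to the case that $e$ is finite: since $\tp(A/X) \vdash \tp(A/Y)$ can be checked on finite subtuples, and every finite $a \subseteq A$ lies in $\dcl(Ce')$ for some finite $e' \subseteq e$, the countable case follows from the finite one (using monotonicity of $\thind$ on the right for part (i), and for part (ii) either careful bookkeeping on the relevant $\kInt$-sorts or a chained construction of resolutions $D_0 \subseteq D_1 \subseteq \cdots$ with $D_{n+1}$ a prime resolution of $\dcl(D_n, e_{n+1})$ over $D_n$).

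For finite $e$, invoke Theorem~\ref{resolution-exists} to obtain a prime resolution $D \subseteq \R$ of $A$ satisfying $A \subseteq \dcl(D)$, $k_D = k(A)$, $\Gamma_D = \Gamma(A)$, and $\kInt_{\Gamma_D}^D = \kInt_{\Gamma(A)}^A$. The last identity uses that $k$ and each $\RV_\gamma$ are linearly ordered sorts, so $\dcl = \acl$ on them, bridging the $\acl(Ce)$ appearing in Theorem~\ref{resolution-exists} with $A = \dcl(Ce)$. Since $D$ is algebraically closed in $\R$, it is a real closed valued field extending the maximal $C$. For part (i), the hypotheses of Corollary~\ref{dom-by-valuegp-resfield} for $C \subseteq D$ hold automatically in RCVF: $k_D$ is an ordered extension of the real closed field $k_C$, so $k_C$ is algebraically closed in $k_D$ and the extension is regular; and $\Gamma_D/\Gamma_C$ is torsion-free since $\Gamma_C$ is divisible. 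This gives domination of $\tp(D/C)$ by $k \cup \Gamma$. For part (ii), invoke Theorem~\ref{real-prop-12.15} directly on $C \subseteq D$ (mirroring the proof of Corollary~\ref{dom-by-resfield-over-valuegp} by passing to $B^\rc$); this sidesteps the ``elementary substructure'' hypothesis of that corollary, since $D$ is automatically a real closed valued field.

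The final step transfers domination from $D$ to $A$. Given $B$ satisfying the appropriate \th-independence hypothesis — which, by the identifications $k_D = k(A)$, $\Gamma_D = \Gamma(A)$, and $\kInt_{\Gamma_D}^D = \kInt_{\Gamma(A)}^A$, coincides with the corresponding hypothesis for $D$ — and an automorphism $\sigma$ of $\R$ fixing the relevant parameter set, the automorphism form of domination for $D$ (Fact~\ref{domination-automorphism-definition}) produces an automorphism $\tau$ agreeing with $\sigma$ on $D$ and fixing the target parameters. Since $A \subseteq \dcl(D)$, $\tau$ agrees with $\sigma$ on $A$ as well, yielding the desired type-implication for $A$. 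The main technical point is not the domination step — which reduces cleanly to Section~2 — but the identifications of residue field, value group, and $k$-internal sorts between $D$ and $A$; this is precisely what Theorem~\ref{resolution-exists} delivers, and once these are in hand the transfer is essentially bookkeeping.
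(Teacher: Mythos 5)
Your overall architecture is the same as the paper's: reduce to finitely many imaginaries, take a prime resolution $D$ of $A$ via Theorem~\ref{resolution-exists} with $k_D=k(A)$, $\Gamma_D=\Gamma(A)$ and $\kInt_{\Gamma_D}^D=\kInt_{\Gamma(A)}^A$, apply the Section~2 results to $C\subseteq D$ against the auxiliary structure $M$, and transfer to $A$ through $A\subseteq\dcl(D)$ (the paper does exactly this, using Corollary~\ref{prop-12-11-alternate} for (i) and Corollary~\ref{not-alg-closed} for (ii), and disposes of the countable case by citing \cite[Corollary~11.15]{HHM2}; your finite-subtuple reduction is fine for (i), but for (ii) the base $C\Gamma(A)$ itself changes when you pass to a finite subtuple, so the chained-resolution variant you only mention in passing is what would actually have to be carried out).

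The genuine gap is your claim that the hypotheses of Corollary~\ref{dom-by-valuegp-resfield}, respectively Theorem~\ref{real-prop-12.15}, ``hold automatically'' because $k_C$ is real closed and $\Gamma_C$ is divisible. The theorem only assumes $C$ is maximal as a valued field, and maximality implies neither: the Hahn fields $\bQ((t^{\bQ}))$ and $\bR((t^{\bZ}))$ are maximal with non--real-closed residue field, respectively non-divisible value group. Moreover, since $k_D=k(A)$ is real closed and $\Gamma_D=\Gamma(A)$ is divisible, regularity of $k_D/k_C$ and torsion-freeness of $\Gamma_D/\Gamma_C$ actually \emph{force} $k_C$ to be real closed and $\Gamma_C$ divisible, so these hypotheses are exactly what is at stake rather than a formality. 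The same issue blocks your use of Theorem~\ref{real-prop-12.15} in part (ii): that theorem requires the base $C$ itself to be a maximal \emph{real closed} valued field, and observing that $D$ is real closed only takes care of the hypothesis on $L$, not on $C$. This is precisely why the paper routes (ii) through Corollary~\ref{not-alg-closed}, which trades real closedness of the base for regularity/torsion-freeness and builds a maximal real closed $\bar{C}\supseteq C^{\rc}$ inside $M$, and routes (i) through Corollary~\ref{prop-12-11-alternate}, whose hypotheses are stated as linear disjointness of the residue fields and $\Gamma_L\cap\Gamma_M=\Gamma_C$. To repair your argument you must either impose such hypotheses on $C$ (as the paper does elsewhere, compare the parenthetical assumptions in Theorem~\ref{forking-over-maximal}) or derive the linear-disjointness and intersection conditions needed for Corollaries~\ref{prop-12-11-alternate} and~\ref{not-alg-closed} from the stated \th-independence; they do not come for free from maximality of $C$.
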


\begin{proof}
i) Let $M$ be any substructure of $\R,$ containing $C$ with $k_A$, $k_M$ independent over $k_C$ and 
$\Gamma_A$, $\Gamma_M$ independent over $\Gamma_C$. We need to show that
\[
	\tp(A/Ck_M\Gamma_M) \vdash \tp(A/M).
\]
Take $\sigma \in \Aut(\mathcal{R}/Ck_M\Gamma_M )$ with $\sigma (A)=A'$.  By Theorem~\ref{resolution-exists} (and 
\cite[Corollary~11.15]{HHM2}), there is a resolution $L$ of $A$ with 
$k_L = k_A$ and $\Gamma_L= \Gamma_A$. Let $L'$ be $\sigma(L)$.  Assume $\sigma (L)=L'$ has the same residue field and value group 
as $A$. As the hypotheses of Corollary~\ref{prop-12-11-alternate} are met, we may find an automorphism $\tau$ agreeing with 
$\sigma$ on $L$ and restricting to the identity on $M$.  Since $A\subseteq \dcl(L)$, $\tau$ maps $A$ to $A'$.  Thus $A$ and 
$A'$ realize the same type over $M$.

ii) Let $C^+ = \dcl(C\Gamma_A)$ and assume that $M$ is a substructure containing $C^+$ such that 
$\kInt_{C^+}^M$ is independent from $\kInt_{C^+}^A$. 
Apply Theorem \ref{resolution-exists} to obtain a resolution $L$ of $A$ with $\kInt^L_{C^+} = \kInt^A_{C^+}$ and $\Gamma_L= \Gamma_A$.  
Apply Corollary \ref{not-alg-closed} to see that $\tp(L/C^+\kInt_{C^+}^M)\vdash \tp(L/M)$ and reason as in (i) to see that 
$\tp(A/C^+\kInt_{C^+}^M)\vdash \tp(A/M)$, as required.

\end{proof}


\end{document}